\newtheorem{coro}{Corollary}
\newtheorem{prop}{Proposition}[section]
\newtheorem{theo}{Theorem}
\newtheorem{lemm}{Lemma}[section]
\newtheorem*{cons}{Consequences of Lemma 3.2}
\newtheorem{clai}{Claim}
\newtheorem{remar}{Remark}[section]
\newtheorem{propri}{Properties}[section]
 \newtheoremstyle{rem}
 {3 pt}{3 pt}{\rm}{3 pt}{\bf}{\hskip -0.5 pt . }{0 pt}
 {\thmname{#1}\thmnumber{#2}\thmnote{\textnormal(#3)}}
 \theoremstyle{rem}
 \newtheoremstyle{defi}
 {3 pt}{3 pt}{\rm}{3 pt}{\bf}{\hskip -0.5 pt . }{0 pt}
 {\thmname{#1}\thmnumber{#2}\thmnote{\textnormal(#3)}}
 \theoremstyle{defi}
 \newtheorem{defi}{Definition }[section]
\title{Reversible maps and products of involutions in groups of IETs.}
\author{ Nancy Guelman and Isabelle Liousse}
\begin{document}

\address{{\bf  Nancy  GUELMAN}, IMERL, Facultad de Ingenier\'{\i}a, {Universidad de la Rep\'ublica, C.C. 30, Montevideo, Uruguay.} \emph {nguelman@fing.edu.uy}.}

\address{{\bf Isabelle LIOUSSE}, Univ. Lille, CNRS, UMR 8524 - Laboratoire Paul Painlevé, F-59000 Lille, France. \emph {isabelle.liousse@univ-lille.fr}.}
\begin{abstract}
 
An element $f$ of a group $G$ is reversible  if it is conjugated in $G$ to its own inverse; when the conjugating map is an involution, $f$ is called strongly reversible.
We describe reversible maps  in certain groups of interval exchange transformations namely $G_n \simeq  (\mathbb S^1)^n  \rtimes\mathcal S_n $, where $\mathbb S^1$ is the circle  and $\mathcal S_n $ is the group of permutations of $\{1,...,n\}$. We first characterize strongly reversible maps, then we show that reversible elements are strongly reversible. As a corollary,  we obtain that composites of involutions  in $G_n$ are product of at most four involutions.

We prove that any reversible Interval Exchange Transformation (IET) is  reversible by a finite order element and then it is the product of two periodic IETs. In the course of proving this statement, we classify the free actions of $BS(1,-1)$ by IET and we extend this classification to free actions of finitely generated torsion free groups containing a copy of $\mathbb Z^2$. We also give examples of faithful free actions of $BS(1,-1)$ and other groups containing reversible IETs.

We show that periodic IETs are product of at most $2$ involutions.  For IETs that are products of involutions, we show that such 3-IETs are periodic and then are product of at most $2$ involutions and  we exhibit a family of non periodic 4-IETs for which we prove that this number is at least $3$ and at most $6$. \vskip -2truecm
\end{abstract}

\maketitle
\vskip -2truecm
\tableofcontents

\section{Introduction.}

An element  $f$ of a group $G$ is called {\bf reversible} if there exists $b \in G$ such that $bfb^{-1}=f^{-1}$. We say that the element $b$  {\bf reverses}  $f$ and $f$ is said to be {\bf $b$-reversible.}

An element  $f$ is called {\bf strongly reversible} if there exists an involution  $b \in G$ such that $bfb=f^{-1}$.

\smallskip

An {\bf interval exchange transformation (IET)}  is a bijective map $f: [0,1) \rightarrow [0,1)$ defined by a finite partition of the unit interval into half-open subintervals and a  reordering of these intervals by translations. We denote by $\mathcal G$  {\bf the group consisting in all IETs}. If the partition has cardinal $r$, we say that $f$ is an $\mathbf{r}${\bf -IET}. 

More generally, an {\bf affine interval exchange transformation (AIET)} is a bijective map $[0,1) \rightarrow [0,1)$ defined by a finite partition of the unit interval into half-open subintervals such that the restriction to each  of these intervals is a  direct  affine map.

\smallskip

In \cite{GL}, the authors  have proved that the Baumslag-Solitar groups (see \cite{BS}) defined by $BS(m,n)= \langle\ a,b \ \vert \  ba^m b^{-1}=a^n\ \rangle$ do not act faithfully by  AIET, when $|m| \neq |n|$. So, it is natural to ask whether $BS(1,-1)$ acts faithfully by AIET or  IET. In section 2, we will construct examples of free faithful actions of $BS(1,-1)$ by IET.

\smallskip

Therefore, it is of interest to classify reversible maps in AIET or IET. O'Farrell and Short, motivated by the importance of reversibility in dynamical systems and group theory, wrote a quite complete survey (see \cite{FS}) exposing mostly recent works on the problem of classifying reversible elements in a large number of groups. The authors have  raised the following questions: given a group $G$, are all reversible elements of $G$  strongly reversible? Does there exist bounds for the minimal number of involutions that are needed for writing an element in the subgroup of $G$ generated by its involutions.

In this paper, we first answer the O'Farrel-Short questions for some particular groups of interval exchange transformations  $G_n \simeq  (\mathbb S^1)^n  \rtimes\mathcal S_n$, where $\mathcal S_n  $ is the group of permutations of $\{1,...,n\}$.  Later, we answer {partially} these questions in $\mathcal G$,  noting that the groups $G_n$ play an important role. These groups $G_n$ will be precisely  defined in the following

\begin{defi}  \label{Gn} \  

Let $n$ be a positive integer and $\mathbb S_n= [0, \frac{1}{n}]/0=\frac{1}{n}$ the circle of length $\frac{1}{n}$. We define $G_n$ as the set of IETs on $[0,1)$ that preserve the partition $[0,1)=[0,\frac{1}{n})\cup [\frac{1}{n}, \frac{2}{n}) ... \cup [\frac{n-1}{n},1)$ and whose restrictions to intervals $I_i=[\frac{i-1}{n}, \frac{i}{n})$ are IETs with only one interior discontinuity.

\smallskip

For $g\in G_n$, we define $\sigma_g$ as the element of $\mathcal S_n$ given by $\sigma_g(i)=j$, if $g(I_i) = I_j$. 

It follows that   $g_{\vert I_i} = R_{\alpha_i,\sigma_g(i)}$, where $R_{\alpha_i,\sigma_g(i)}(x) =  x + \alpha_i +\frac{\sigma_g(i)-i}{n} \  (mod \frac{1}{n})$.

\smallskip

We define $\alpha_g=(\alpha_1, ..., \alpha_n)=( \alpha_1(g), ..., \alpha_n(g)) \in {\mathbb S_n} ^n $ and we denote $g=(\alpha_g, \sigma_g)$.
\end{defi}

\begin{remar}\label{cons} {A straightforward consequence of this definition is that the map $( \alpha,\sigma ): G_n \rightarrow ({\mathbb S}_n)^n \rtimes\mathcal S_n  $ is an isomorphism,  thus the group $G_n$  is virtually abelian. It can be seen as a subgroup of $GL(n,\mathbb C)$ and any finitely generated virtually abelian group is a subgroup of some $G_n$. These groups are very relevant since Dahmani, Fujiwara and Guirardel (see  \cite{DFG}) recently proved that any finitely generated torsion free solvable subgroup of IETs is virtually abelian.}
\end{remar}

On one hand, according to Proposition 3.4 of \cite{FS} any permutation is strongly reversible in $\mathcal S_n$. On the other hand, reversible elements in abelian groups are involutions. Groups $G_n$ are somewhat intermediate.
 
\medskip

Given $\sigma$ and $\tau$ that reverses $\sigma$, in this paper we firstly determine  necessary and sufficient conditions on  $\alpha_f$ such that $f=(\alpha_f, \sigma)$ is reversible in $G_n$  by an involution of the form $ T=(\alpha_T, \tau)$ and we describe the possible $\alpha_T$'s.

\smallskip

\begin{defi}
Consider $\sigma$ and $\tau$ that reverses $\sigma$, let us denote $S_i$ the {$\sigma$}-cycle by $i$. An element $u$ is called {\bf distinguished} if either $ \tau(u) \notin S_u$ or $ \tau(u) \in \{u,\sigma(u)\}$. 
\end{defi}
In Lemma \ref{cyc}, we will prove any $\langle\sigma_f, \tau \rangle$-orbit has distinguished elements.
We can now formulate our first result that characterizes strongly reversible elements in $G_n$.

\begin{theo}\label{Th1}
Let $f=(\alpha_f,\sigma_f)\in G_n$, $\tau$ an involution that reverses $\sigma_f$ and 
 $D$  a set of distinguished representatives of the $\langle\sigma_f, \tau \rangle$-orbits.

\smallskip

There exists an involution $T=(\alpha_T,\tau)$ that reverses $f$ if and only if for any $u$ in $D$, 

\begin{equation} 
\tag{$\dagger \dagger$}
 \sum_{j\in S_u} \alpha_{j}(f)+\displaystyle\sum_ {j\in \tau(S_u)} \alpha_{j}(f)=0.
 \label{CNSu}
 \end{equation} 

Under these conditions,
\begin{enumerate}[(A)]
\item if $S_u$ is  $\tau$-invariant then  $\alpha_{u}(T)$ is given by  
\begin{equation*}
 2 \alpha_{u}(T)=0 {\text  { \ if \  }} \tau(u)=u
{\text { \ \ \ \  or \  \ \ \ }} 
 2 \alpha_{u}(T)= 2\alpha_{u}(f)   {\text { \ if \ } } \tau(u)=\sigma_f(u).
\end{equation*} 
\noindent and it determines uniquely $\alpha_{j}(T)$ for all $j\in S_u $;

\medskip
 
\item  if  $S_u$ is not $\tau$-invariant then $\alpha_u(T)$ can be chosen arbitrary and this choice determines uniquely $\alpha_{j}(T)$ for all $j\in S_u \cup \tau(S_u)$.
\end{enumerate}
\end{theo}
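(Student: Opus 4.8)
The plan is to translate the statement into linear algebra over the circle $\mathbb S_n$. Writing $x_i:=\alpha_i(T)$ and $a_i:=\alpha_i(f)$, and using the group law of $G_n\simeq(\mathbb S_n)^n\rtimes\mathcal S_n$, namely $\alpha_i(gh)=\alpha_i(h)+\alpha_{\sigma_h(i)}(g)$ and $\alpha_i(g^{-1})=-\alpha_{\sigma_g^{-1}(i)}(g)$, I would first record two observations. Since $\tau$ reverses $\sigma_f$, the permutation part of both ``$T$ is an involution'' and ``$TfT=f^{-1}$'' is automatic, so only the $\alpha$-coordinates impose conditions. A direct computation then shows that $T=(\alpha_T,\tau)$ is an involution iff
\begin{equation*}
x_i+x_{\tau(i)}=0\qquad\text{for all }i,
\end{equation*}
and that it reverses $f$ iff
\begin{equation*}
x_i+x_{\sigma_f\tau(i)}=-a_{\tau(i)}-a_{\sigma_f^{-1}(i)}\qquad\text{for all }i.
\end{equation*}
I would also note the key structural fact that $\rho:=\sigma_f\tau$ is itself an involution (because $\tau\sigma_f\tau=\sigma_f^{-1}$), so both systems pair up indices, and together they couple only indices lying in the same $\langle\sigma_f,\tau\rangle$-orbit. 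Hence the whole problem splits as a product over these orbits.

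Next, I would fix one orbit $O$ and, using Lemma \ref{cyc}, a distinguished representative $u\in O$; write $u_k:=\sigma_f^k(u)$ along the cycle $S_u$ (of length $m$). The involution equations let me eliminate half of the unknowns: in case (B) they give $x_{\tau(u_k)}=-x_{u_k}$, and in case (A) they give $x_{u_{-k}}=-x_{u_k}$ (if $\tau(u)=u$) or $x_{u_{1-k}}=-x_{u_k}$ (if $\tau(u)=\sigma_f(u)$). Substituting into the reversing equations collapses them, in every case, to a first-order cyclic recursion of the form $x_{u_k}-x_{u_{k-1}}=c_k$ with an explicit right-hand side $c_k$ built from the $a_j$'s. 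Summing this recursion once around the cycle makes the left side telescope to $0$, while the right side sums to $-\big(\sum_{j\in S_u}a_j+\sum_{j\in\tau(S_u)}a_j\big)$; thus the recursion closes up consistently iff condition \eqref{CNSu} holds. This simultaneously proves necessity of \eqref{CNSu} and, when it holds, produces a solution determined by the single base value $x_u=\alpha_u(T)$.

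It then remains to read off the constraint on the base value $x_u$, and here the distinguished choice of $u$ is exactly what makes the answer clean. In case (B) no further constraint appears and $\alpha_u(T)$ is free, giving statement (B). In case (A), evaluating the involution equation at the controlled index forces $2x_u=0$ when $\tau(u)=u$ and $2x_u=2a_u$ when $\tau(u)=\sigma_f(u)$, which are exactly the equations in (A); the recursion then determines all remaining $\alpha_j(T)$ uniquely.

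I expect the main obstacle to be the verification that the residual involution equations not used in the reduction are automatically satisfied once \eqref{CNSu} and the single base equation hold. Concretely, in the $\tau$-invariant case with $m$ even there is an a priori extra constraint $2x_{u_{m/2}}=0$ coming from the antipodal fixed index of $\tau$, and one must check it is forced by \eqref{CNSu}; a short telescoping computation shows $2x_{u_{m/2}}=-2\sum_{j\in S_u}a_j=0$. Organizing this bookkeeping uniformly across the two sub-cases of (A) and the parity of $m$, so that ``distinguished'' genuinely reduces the whole involution system to the one displayed equation on $\alpha_u(T)$, is the technical heart of the argument; the converse direction, that any such $\alpha_T$ yields a bona fide involution reversing $f$, is then immediate by reversing these steps.
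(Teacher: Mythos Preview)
Your proposal is correct and follows essentially the same route as the paper: both reduce the problem to the coordinate conditions $x_{\tau(j)}=-x_j$ and $x_{\sigma(j)}-x_j=-a_j-a_{\tau\sigma(j)}$ (the paper's Conditions (1) and (2) in the Consequences of Lemma~\ref{alpha}), sum the recursion around a cycle to obtain $(\dagger\dagger)$ as the closing-up obstruction, and for sufficiency define $\alpha_T$ along each cycle via the recursion (the paper's formula $(4_k)$) from an admissible base value, then verify the remaining involution identities. The paper carries out that last verification---which you rightly flag as the technical heart---by explicit telescoping computations in the two sub-cases of~(A), exactly as you outline; your displayed ``reversing equation'' already has the involution condition substituted in (the raw form is $x_{\sigma_f\tau(i)}-x_{\tau(i)}=-a_{\tau(i)}-a_{\sigma_f^{-1}(i)}$), but this is harmless since both conditions are imposed simultaneously.
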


\smallskip

\begin{remar} {More precisely}, under an admissible choice of $\alpha_u(T)$ for $u\in D$,  the other coordinates of $\alpha(T)$ are uniquely defined by induction using the identities $\displaystyle  (1_k) \ \ \alpha_{\tau(u^k)} (T) = -\alpha_{u^k} (T) $  and $\displaystyle (4_k) \ \  \alpha_{u^k} (T) = \alpha_u(T)- \sum_{j=0}^{k-1} \alpha_{u^j} (f) -  \sum_{j=1}^{k} \alpha_{\tau (u^j)} (f) $, \     where $u^k =\sigma_f^k(u)$. As consequences of Lemma \ref{alpha}, we will see that these identities are necessary and they will be used in the proof of Theorem \ref{Th1} {(2)} in order to establish that Conditions (\ref{CNSu}) are sufficient to construct a reversing involution of $f$ with associated permutation $\tau$.

In addition if $S_u$ has an odd length then distinguished representatives of both nature exist and give rise to apparently distinct conditions but they are equivalent by  $(4_k)$.
\end{remar}

\smallskip

The last theorem allows to prove the following
\begin{coro} \label{FO}\ 

\begin{enumerate}
\item  A strongly reversible element $f$ of $G_n$ having a cycle as associated permutation $\sigma_f$  (or a product of cycles with distinct lengths) has finite order and there exist finite order elements of $G_n$ that are not strongly reversible in $G_n$.
\item There exist strongly reversible elements that are not of finite order, however  minimal strongly reversible elements do not exist in $G_n$.
\end{enumerate}
\end{coro}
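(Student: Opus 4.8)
The plan is to read everything off Theorem~\ref{Th1} together with an elementary torsion criterion for finite order. For a cycle $S$ of $\sigma_f$ set $\Sigma_S=\sum_{j\in S}\alpha_j(f)\in\mathbb S_n$. First I would record that $f=(\alpha_f,\sigma_f)$ has finite order in $G_n$ if and only if every cycle sum $\Sigma_S$ is a torsion element of the circle $\mathbb S_n$ (equivalently, is rational). Indeed, writing $L$ for the order of $\sigma_f$ (the lcm of the cycle lengths), the power $f^L$ has trivial permutation part and acts on each interval $I_i$ as the rotation by $\tfrac{L}{|S|}\Sigma_{S}$, where $S$ is the cycle through $i$; hence $f^L$, and therefore $f$, has finite order exactly when each $\Sigma_S$ is torsion.

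For item (1), suppose $f$ is strongly reversible and $\sigma_f$ is a single cycle, or a product of cycles of pairwise distinct lengths. Let $T=(\alpha_T,\tau)$ be a reversing involution, so $\tau\sigma_f\tau^{-1}=\sigma_f^{-1}$. Since conjugation preserves cycle lengths and $\sigma_f^{-1}$ has the same cycle lengths as $\sigma_f$, the distinct-length hypothesis forces $\tau$ to preserve each cycle setwise, i.e. $\tau(S)=S$ for every cycle $S$. Then for a distinguished representative $u$ of $S$ one has $\tau(S_u)=S_u$, so condition~\eqref{CNSu} collapses to $2\Sigma_{S_u}=0$. Thus each $\Sigma_S$ lies in the $2$-torsion $\{0,\tfrac1{2n}\}$ of $\mathbb S_n$, is in particular torsion, and the criterion above gives that $f$ has finite order. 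To exhibit a finite order element that is \emph{not} strongly reversible I would take $\sigma_f$ a single $n$-cycle and choose $\alpha_f$ with $\Sigma:=\Sigma_{\{1,\dots,n\}}=\tfrac1{3n}$ (for instance $\alpha_1=\tfrac1{3n}$ and all other $\alpha_j=0$): then $\Sigma$ is torsion so $f$ has finite order, while for every $\tau$ reversing the $n$-cycle one has $\tau(S_u)=S_u$ and \eqref{CNSu} becomes $2\Sigma=\tfrac2{3n}\neq 0$ in $\mathbb S_n$, so by Theorem~\ref{Th1} no reversing involution exists.

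For item (2), to produce strongly reversible elements of infinite order I would use cycles of equal length that $\tau$ may swap. Take (for $n\ge 2$) $\sigma_f$ with two cycles $S,S'$ of the same length and an involution $\tau$ reversing $\sigma_f$ with $\tau(S)=S'$; then $S$ is not $\tau$-invariant and we are in case (B) of Theorem~\ref{Th1}, where \eqref{CNSu} reduces to $\Sigma_S+\Sigma_{S'}=0$. Choosing $\alpha_f$ with $\Sigma_S=\beta$ irrational and $\Sigma_{S'}=-\beta$ satisfies this, so $f$ is strongly reversible; but $\Sigma_S$ is not torsion, so by the criterion $f$ has infinite order. The simplest instance is $n=2$, $\sigma_f=\mathrm{id}$, $\tau=(1\,2)$, $\alpha_1=\beta$, $\alpha_2=-\beta$. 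Finally, no strongly reversible element is minimal: if $\sigma_f$ is not a single $n$-cycle, then the union of the intervals $I_j$ over the indices $j$ of one cycle is a proper nonempty $f$-invariant set, so orbits meeting it are not dense and $f$ is not minimal; and if $\sigma_f$ is a single $n$-cycle, item (1) shows a strongly reversible $f$ has finite order, hence is periodic and again not minimal.

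The main point requiring care is the reduction of \eqref{CNSu} in item (1): one must check that the distinct-length hypothesis genuinely forces every cycle to be $\tau$-invariant, so that both sums in \eqref{CNSu} coincide with $\Sigma_{S_u}$, and must handle uniformly the two shapes of $\tau$-invariant cycle in case (A) of Theorem~\ref{Th1} (the cases $\tau(u)=u$ and $\tau(u)=\sigma_f(u)$), noting that in both the relevant constraint is $2\Sigma_{S_u}=0$. The remaining verifications, namely the order criterion and the explicit examples, are routine computations in $(\mathbb S_n)^n\rtimes\mathcal S_n$.
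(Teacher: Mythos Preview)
Your proof is correct and follows essentially the same approach as the paper: both use Theorem~\ref{Th1} to reduce strong reversibility to the cycle-sum conditions, observe that distinct cycle lengths force $\tau$-invariance of each cycle so that \eqref{CNSu} becomes $2\Sigma_S=0$, and deduce finite order; both argue non-minimality by noting that a minimal element must have $\sigma_f$ an $n$-cycle, whence item~(1) applies. Your presentation is in fact slightly cleaner, since you state the general torsion criterion for finite order explicitly before using it, and your infinite-order example in $G_2$ with $\sigma_f=\mathrm{Id}$ is unambiguous.
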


{Summing Conditions (\ref{CNSu}) over $u\in D$, we get that} any strongly reversible $f\in G_n$ satisfies {$2\sum_{j=1}^n$}  $ \alpha_{j}(f)=0 $. So, it is convenient to define ${\mathbf{A(f)}}= 2\sum_{j=1}^n \alpha_{j}(f)$ for $f \in G_n$. It is easy to check that $A$ is a morphism and $A(f)=0$ for $f$ strongly reversible in $G_n$.

\begin{remar} \label{revcyc}
When $\sigma_f$ is a $n$-cycle, $f$ is strongly reversible if and only if $A(f)=0$ and there are  two possible admissible choices for $\alpha_u(T)$ (since it is well defined modulo $1/2$). 
\end{remar}  

\begin{coro}\label{PI} Let $I_n$ be the normal subgroup of $G_n$ generated by its involutions.
\begin{enumerate}
\item  The kernel of $A$ coincides with $I_n$.
\item  Any $f \in I_n$ can be written as the product of at most $4$ involutions.
\item  For any $n\geq 3$, there exists $f\in G_n$ which can not be written as a product of $3$ involutions.
\end{enumerate}
 \end{coro}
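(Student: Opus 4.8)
The plan is to organize everything around the single homomorphism $A$ and the elementary equivalence \emph{``product of two involutions $=$ strongly reversible element''}: if $f=T_1T_2$ with $T_i$ involutions then $T_1fT_1=T_2T_1=f^{-1}$, and conversely if $TfT=f^{-1}$ then $(Tf)^2=f^{-1}f=1$, so $f=T\,(Tf)$. First I record that $A(fg)=A(f)+A(g)$, because the $\mathcal S_n$-action only permutes the coordinates $\alpha_j$, so $\sum_j\alpha_j$ is additive; and that any involution $T=(\alpha_T,\tau)$ satisfies $\alpha_T+\tau\cdot\alpha_T=0$, hence $A(T)=2\sum_j\alpha_j(T)=0$. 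As involutions are stable under conjugation, the subgroup they generate is already normal and equals $I_n$, so $I_n\subseteq\ker A$, which is one inclusion of (1). The remaining inclusion of (1) and the whole of (2) follow from one claim: \emph{every $f\in\ker A$ is a product of two strongly reversible elements}, hence of at most four involutions, and in particular lies in $I_n$.

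To prove the claim I factor the permutation into cycles of (almost) full support. It is classical that any $\sigma\in\mathcal S_n$ can be written $\sigma=\sigma_g\sigma_h$ where $\sigma_g$ is an $n$-cycle and $\sigma_h$ is an $n$-cycle (when $\sigma$ is even) or a single $(n-1)$-cycle (when $\sigma$ is odd); the parities match since an $n$-cycle times an $(n-1)$-cycle always has odd signature. Apply this to $\sigma_f$, set $g=(0,\sigma_g)$, which is manifestly a product of two involutions $(0,\tau_1)(0,\tau_2)$ with $\sigma_g=\tau_1\tau_2$, and put $h=g^{-1}f=(\sigma_g^{-1}\cdot\alpha_f,\sigma_h)$, so that $f=gh$ and $A(h)=A(f)=0$. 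If $\sigma_f$ is even, $\sigma_h$ is an $n$-cycle, and Remark \ref{revcyc} gives that $h$ is strongly reversible. If $\sigma_f$ is odd, $\sigma_h$ fixes a single point $p$; I adjust $\alpha_g$ on two coordinates (keeping $A(g)=0$) so that the $p$-coordinate of $\alpha_h$ becomes $2$-torsion, and then Theorem \ref{Th1}, applied to the single $(n-1)$-cycle of $\sigma_h$ together with its unique $\tau$-fixed point $p$, shows $h$ is strongly reversible: the cycle condition $(\dagger\dagger)$ reads $2\sum_{S_u}\alpha_j(h)=0$, which holds because $A(h)=0$ and $2\alpha_p(h)=0$, while the fixed point contributes exactly $2\alpha_p(h)=0$. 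In either case $f=gh$ is a product of at most four involutions, giving (2) and the inclusion $\ker A\subseteq I_n$, hence (1).

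For (3) I use the reformulation that $f$ is a product of at most three involutions if and only if $f$ is strongly reversible or $Tf$ is strongly reversible for some involution $T$ (since $f=T\,(Tf)$). Thus it suffices to produce $f\in\ker A$ that is not strongly reversible and for which $Tf$ is never strongly reversible. For $n=3$ I take the translation $f=(\beta,\mathrm{id})$ with $\beta=(t,t,-2t)$ and $t$ chosen so that neither $t$ nor $-2t$ is $2$-torsion (i.e. $2t\neq0$ and $4t\neq0$ in $\mathbb S_3$); then $A(f)=2(t+t-2t)=0$, and $f$ is not strongly reversible since its coordinates admit no pairing $\beta_{\tau(i)}=-\beta_i$. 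A product of three involutions equal to a translation forces $\tau_1\tau_2\tau_3=\mathrm{id}$, i.e. $\tau_3=\tau_2\tau_1$ with $\tau_1,\tau_2$ commuting; as $\mathcal S_3$ has no Klein four-subgroup, $\langle\tau_1,\tau_2\rangle$ has order at most $2$, and a short parity/fixed-point computation then shows the resulting $\beta$ must have a $2$-torsion coordinate. Since ours has none, $f$ is not a product of three involutions, so by (2) it is a product of exactly four.

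The main obstacle is the lower bound (3) for general $n$. The naive embedding of the three-dimensional example into $G_n$ fails: on four letters the Klein four-group acts transitively and can absorb a full-support translation into three involutions — a direct computation realises $(t,t,t,-3t)$ as such a product — so the extra coordinates manufacture new factorizations. One must therefore choose, for each $n$, an element $f\in\ker A$ with a carefully designed permutation and $\alpha$-part (not merely a translation) and rule out \emph{every} involution $T$ by showing that $Tf$ violates one of the conditions $(\dagger\dagger)$ of Theorem \ref{Th1}. Organizing this finite but delicate case analysis over the admissible permutations $\tau_T$ and the induced cycle structure of $\tau_T\sigma_f$ is where the real work lies.
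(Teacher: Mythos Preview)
Your treatment of (1) and (2) is correct but takes a detour the paper avoids. You insist on factoring $\sigma_f=\sigma_g\sigma_h$ with \emph{both} factors being full or nearly full cycles, which forces the parity split and the coordinate adjustment in the odd case. The paper's device is simpler: pick any $n$-cycle $\gamma$ and set $T=(0,\gamma\sigma_f^{-1})$. Then $\sigma_{Tf}=\gamma$ and $A(Tf)=0$, so $Tf$ is strongly reversible by Remark~\ref{revcyc}; and $T$ is strongly reversible regardless of $\sigma_T$ because $\alpha_T=0$ makes every condition $(\dagger\dagger)$ read $0=0$. Thus $f=T^{-1}(Tf)$ is a product of four involutions with no case analysis and no appeal to a ``classical'' factorization in $\mathcal S_n$.

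The genuine gap is (3) for $n\geq 4$: you handle only $n=3$ and explicitly leave the rest as ``where the real work lies''. The paper does \emph{not} proceed by the case analysis over admissible $\tau_T$ that you anticipate; it uses a rank argument. Define the rank of $f\in G_n$ as the rank of the subgroup of $\mathbb S_n$ generated by $\frac{1}{2n}$ and the $\alpha_i(f)$. Take $f=(\alpha_f,\mathrm{Id})$ with $\alpha_f=(\gamma_1,\dots,\gamma_{n-1},-\sum\gamma_i)$ for rationally independent irrationals $\gamma_i$, so $A(f)=0$ and $\operatorname{rank}(f)=n$. If $f=Tr$ with $T$ an involution and $r$ strongly reversible by some $(\alpha_l,\tau)$, then $\sigma_r=\sigma_T$ is an involution; sort its $1$- and $2$-cycles according to whether $\tau$ fixes them or swaps them in pairs, and count the independent linear relations that Theorem~\ref{Th1} imposes on $\alpha(r)$, together with those that $\alpha(T)$ inherits from being an involution. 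Transporting these to $\alpha(Tr)=\alpha(r)-\alpha(T)$ yields $\operatorname{rank}(Tr)\leq n+1-\tfrac{n}{4}$, which is $<n$ once $n\geq 5$. The same count (with $p=0$ forced) gives rank $\leq 2$ when $n=3$, recovering your case. For $n=4$ the rank bound is not sharp enough, and the paper instead observes that in the only configuration achieving rank $4$ one has $\tfrac12 A(Tr)=0$ exactly, so any $f\in I_4$ with $\tfrac12 A(f)=\tfrac18$ cannot be a product of three involutions. This global linear-algebra obstruction is the missing idea; it replaces the exhaustive search over $\tau_T$ that you correctly identified as unmanageable.
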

  
Theorem \ref{Th1} provides a characterization of strongly reversible maps in $G_n$. It turns out that it also holds for reversible elements according to next

\begin{theo}\label{Th2}
Let $f=(\alpha_f,\sigma_f)\in G_n$ reversible in $G_n$ then $f$ is strongly reversible in $G_n$.
\end{theo}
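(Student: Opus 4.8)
The plan is to combine the characterization of Theorem \ref{Th1} with a reduction of reversibility to a statement about the sums of the $\alpha_j(f)$ along the cycles of $\sigma_f$. Write $b=(\alpha_b,\sigma_b)$ for an element reversing $f$. Projecting $bfb^{-1}=f^{-1}$ to $\mathcal S_n$ immediately gives $\sigma_b\sigma_f\sigma_b^{-1}=\sigma_f^{-1}$, so $\sigma_b$ reverses $\sigma_f$; in particular $\sigma_b$ permutes the $\sigma_f$-cycles and preserves their lengths (a cycle of $\sigma_f^{-1}$ being the same subset as a cycle of $\sigma_f$). For a $\sigma_f$-cycle $S$ set $\Sigma(S)=\sum_{i\in S}\alpha_i(f)$; this is exactly the translation amount of the return map $f^{|S|}$ on each $I_i$, $i\in S$, and it is the quantity appearing in Condition (\ref{CNSu}).

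First I would extract from $bfb^{-1}=f^{-1}$ the coordinate identity obtained by equating the $\alpha$-part on each interval $I_j$. Using the composition law of $G_n$, this identity has the schematic form $-\alpha_j(b)+\alpha_j(f)+\alpha_{\sigma_f(j)}(b)=-\alpha_{\sigma_f^{-1}\sigma_b(j)}(f)$. Summing over all $j$ in a fixed $\sigma_f$-cycle $S$, the two $\alpha_b$-terms telescope and cancel (because $\sigma_f$ permutes $S$), while the left-hand $\alpha_f$-terms sum to $\Sigma(S)$ and the right-hand side sums to $-\Sigma(\sigma_b(S))$. Hence reversibility forces, for every $\sigma_f$-cycle $S$, the relation $\Sigma(S)+\Sigma(\sigma_b(S))=0$. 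The point of this step is that it eliminates $\alpha_b$ entirely and reduces the problem to the combinatorics of how $\sigma_b$ acts on the set of $\sigma_f$-cycles.

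It then remains to manufacture an \emph{involution} $\tau$ reversing $\sigma_f$ for which Condition (\ref{CNSu}) holds, after which Theorem \ref{Th1} yields a reversing involution $T=(\alpha_T,\tau)$ and hence strong reversibility. I would decompose the set of $\sigma_f$-cycles into orbits under the permutation induced by $\sigma_b$; along such an orbit the relation above gives $\Sigma(\sigma_b^i(S))=(-1)^i\Sigma(S)$. On an orbit of even length I pair consecutive cycles $\sigma_b^{2i}(S)\leftrightarrow\sigma_b^{2i+1}(S)$, whose $\Sigma$-values are opposite, and define $\tau$ on their union by the standard involution interchanging two equal-length cycles while conjugating $\sigma_f$ to $\sigma_f^{-1}$; Condition (\ref{CNSu}) for these cycles reads $\Sigma(S)+\Sigma(\tau(S))=0$ and holds by construction. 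On an orbit of odd length $p$, running the relation around the orbit gives $\Sigma(S)=(-1)^p\Sigma(S)=-\Sigma(S)$, so $2\Sigma(S)=0$ on every cycle of the orbit; here I make each such cycle $\tau$-invariant by the standard self-reversal of a single cycle, and Condition (\ref{CNSu}) reduces to $2\Sigma(S)=0$, which now holds. Assembling these pieces gives a genuine involution $\tau$ reversing $\sigma_f$ and satisfying (\ref{CNSu}) on every $\langle\sigma_f,\tau\rangle$-orbit, in particular on a set of distinguished representatives.

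The main obstacle is this last step: one must produce a single involution $\tau$ that simultaneously reverses $\sigma_f$ globally and respects the sign pattern of the $\Sigma(S)$ dictated by $\sigma_b$. The parity dichotomy is essential---only on odd $\sigma_b$-orbits can one afford $\tau$-invariant cycles, precisely because the alternating relation then forces $2\Sigma(S)=0$, which is exactly case (A) of Theorem \ref{Th1}; on even orbits one is forced to pair cycles, matching case (B). Once $\tau$ is built, Theorem \ref{Th1} does the rest, so no further computation with $\alpha_T$ is needed.
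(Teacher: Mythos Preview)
Your proof is correct and takes a genuinely different route from the paper's. The paper works by modifying the given reverser $T$: it splits according to the parity of the order $m$ of $\sigma_T$, and then (when $m$ is even) according to whether some odd power of $\sigma_T$ sends $i$ into $\langle\sigma_f\rangle(i)$. In each subcase it either multiplies $T$ by suitable powers of $T$ and $f$ to force the permutation part to become trivial or an involution on the relevant orbit (Proposition~\ref{CP} and Case~2 of the even part), or glues $T$ and $T^{-1}$ along a well-defined sign function $\epsilon(j)$ to produce an involution $T_0$ directly (Case~1). You instead discard $\alpha_b$ entirely after extracting the cycle relation $\Sigma(S)+\Sigma(\sigma_b(S))=0$, and build a fresh involution $\tau$ from scratch purely from the $\sigma_b$-orbit structure on the set of $\sigma_f$-cycles, then invoke Theorem~\ref{Th1} as a black box. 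What the paper's approach buys is occasional extra information (e.g.\ Proposition~\ref{CP} gives $f^4=Id$ when $\sigma_T$ has odd order) and a reversing involution that is visibly close to the original $T$. What your approach buys is uniformity: no case analysis on the order of $\sigma_T$, and a transparent match between the parity of the $\sigma_b$-orbit on cycles and the dichotomy (A)/(B) of Theorem~\ref{Th1}. One remark: your ``schematic'' coordinate identity is exactly the $k$-th coordinate (after the substitution $k=\sigma_T^{-1}(j)$) of the reversibility equation in Properties~\ref{Pro3}(4), so it is not merely schematic; you may want to state it precisely.
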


\medskip

 For $\mathcal G$ we show a similar result. For that, we describe the free faithful actions of  $BS(1,-1)= \langle \ a,b \ \vert \  ba b^{-1}=a^{-1} \ \rangle$ by IET.  We recall that a group is said to {\bf act freely} if the only element having some fixed point is the trivial element.  In section 2, an example of such an action of  $BS(1,-1)$ by  elements of $G_4$ is given.

\medskip

\begin{theo} \label{Th3}
Let $\langle f,h\rangle$ be a free faithful action of $BS(1,-1)$ by IET then $\langle f,h\rangle$ is $PL\circ IET$-conjugated (that is conjugated through $g= R \circ E $, where $R$ is a PL-homeomorphism and $E$ is an $IET$) to a  free  action of $BS(1,-1)$ by elements of some $G_n$. \end{theo}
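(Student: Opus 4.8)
The plan is to separate the algebraic structure of the action from its dynamical content, and then to reduce the latter to a single rigidity statement about centralisers of minimal IETs.

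Write $f$ and $h$ for the images of $a$ and $b$, so that $hfh^{-1}=f^{-1}$, equivalently $hf=f^{-1}h$. First I would record the purely algebraic features that single out $G_n$ as the natural target. The element $h^2$ is central in $BS(1,-1)$ and commutes with both $f$ and $h$; the subgroup $\langle f,h^2\rangle$ is isomorphic to $\mathbb Z^2$; and one computes $(fh)^2=f(hf)h=f(f^{-1}h)h=h^2$, so that modulo the centre $\langle h^2\rangle$ the group collapses to the infinite dihedral group $\langle \bar f,\bar h\mid \bar h^2=(\bar f\bar h)^2=1\rangle$. This is exactly the pattern realised inside $G_n$, where $\sigma_f$ is inverted by an involution $\tau$ and the ``doubling'' produced by the permutation accounts for the reversing element; it already suggests that the final normal form should present $f$ as an isometry whose power along a cycle is a rotation, and $h$ as a permutation of circles reversing it.

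The dynamical core begins with a decomposition. Since the action is free, every nontrivial element, in particular $f$, $h^2$ and $h$, is fixed-point-free. I would start from the fixed-point-free IET $h^2$, decompose $[0,1)$ into its finitely many minimal components (each a finite union of half-open intervals on which $h^2$ is minimal), and observe that $f$ and $h$, respectively commuting with $h^2$ and normalising $\langle f,h^2\rangle$, permute this finite family. Restricting to a single $\langle f,h^2\rangle$-minimal piece $\mathcal C$, I obtain a minimal free $\mathbb Z^2$-action by IET, equipped with an IET $g:=h^2$ that commutes with $f$ but is independent of its powers.

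The heart of the proof, and the step I expect to be the main obstacle, is the rigidity statement that a minimal IET admitting an independent commuting IET must be, up to $PL\circ IET$-conjugacy, an isometry of a finite disjoint union of equal-length circles (equivalently, it sits in a copy of some $G_m$ with pure rotation data on each cycle of circles). The guiding principle is that minimality together with a rich centraliser rules out ``higher genus'' behaviour: passing to an invariant ergodic measure $\mu$ and the rotation-number semiconjugacy it produces, the commuting pair $\langle f,g\rangle$ should force that semiconjugacy to be injective, so that $f$ becomes a genuine isometry on each minimal circle; the allowed $PL$ factor then rescales the circles to a common length and the $IET$ factor reorders them. Reversibility enters here as a compatibility constraint: since the Sah--Arnoux--Fathi invariant is additive, conjugacy-invariant and antisymmetric under inversion, the relation $f\sim f^{-1}$ forces it to vanish, which is consistent with, and in the rotation normal form equivalent to, the isometric conclusion.

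Finally I would reassemble the pieces and install the reversing element. The isometric structures on the minimal components glue into a single finite union of equal-length circles carrying a free isometric $\mathbb Z^2$-action; the relation $hfh^{-1}=f^{-1}$ then shows that $h$ sends each circle to another circle by an orientation-preserving isometry while inverting the rotation data, which is precisely what an element of $G_n$ with a permutation $\tau$ reversing $\sigma_f$ accomplishes, in the sense of Theorem \ref{Th1}. Conjugating by $g=R\circ E$, with $E$ an $IET$ relabelling the circles as the standard intervals $I_1,\dots,I_n$ and $R$ a $PL$-homeomorphism normalising every circle to length $1/n$, brings the whole action into $G_n$; freeness is preserved because conjugation creates no fixed points. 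This yields the asserted $PL\circ IET$-conjugacy of $\langle f,h\rangle$ into some $G_n$.
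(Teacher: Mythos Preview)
Your outline has the right target but a genuine gap at its ``heart.'' The rigidity statement you isolate --- that a minimal IET admitting an independent commuting IET is $PL\circ IET$-conjugate to an isometry of a finite union of equal circles --- is exactly the substantive content of the theorem, and you do not prove it. The sketch via an invariant ergodic measure and a rotation-number semiconjugacy does not work as stated: minimal IETs can have several ergodic measures, the associated semiconjugacies are not injective in general, and the existence of a commuting element does not by itself force injectivity. Your remark that vanishing of $SAF(f)$ is ``equivalent to the isometric conclusion'' is also incorrect: there exist minimal IETs with $SAF=0$ that are not conjugate to products of restricted rotations, so $SAF(f)=0$ is necessary but far from sufficient.

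The paper supplies precisely the missing mechanism, and it is different from yours. Rather than working through $h^{2}$ and centraliser rigidity, it uses Novak's growth rate $\mathcal N_{x}(f)$ of discontinuities of $f^{n}$ along the $f$-orbit of $x$. Freeness is used in a concrete pigeonhole: the $f$-orbits $\mathcal O_{h^{k}(x)}(f)$ are pairwise disjoint, so for all large $n$ the orbit $\mathcal O_{h^{n}(x)}(f)$ misses the finite set $BP(f)$, giving $\mathcal N_{h^{n}(x)}(f)=0$; conjugacy-invariance of $\mathcal N$ together with $h^{n}fh^{-n}=f^{\pm 1}$ then forces $\mathcal N_{x}(f)=0$ for every $x$. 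Novak's theorem now yields an integer $p$ with $f^{p}$ $\mathcal G$-conjugate to a product of restricted rotations of disjoint supports; a PL rescaling sends this into some $G_{n}$ with trivial permutation. From there the paper shows, using that the centraliser in $\mathcal G$ of an irrational rotation is the rotation group, that $h$ (and then $f$ itself, via a ``virtually in $G_{n}$ implies in $G_{n}$'' lemma) lands in $G_{n}$ after the same $PL\circ IET$ conjugacy. Your final reassembly paragraph is morally correct, but only once this Novak step (or an equivalent replacement) has been carried out.
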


In Lemma \ref{FNBS}, it is shown that a  $BS(1,-1)$  action  is faithful if and only if $f$ and $h$ have infinite order therefore it is natural to deal with the case where $f$ is a minimal IET. As a consequence of next statement, there does not exist free faithful actions of $BS(1,-1)$ by IET with $f$ minimal.

\begin{theo} \label{Th4.1} \ 
\begin{enumerate}
\item  If an IET $f$ is minimal and reversible by $h$ then $\langle f,h\rangle$ is free.
\item  There does not exist a minimal IET  that is reversible by an infinite order $h$. 
\item  Any periodic IET is strongly reversible.
\end{enumerate}
\end{theo}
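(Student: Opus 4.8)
I would prove the three parts in the order (3), (1), (2), since the last one will feed on the first two together with Theorem \ref{Th3}.

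\medskip

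\textbf{Part (3).} A periodic IET has finite order, so there is $N$ with $f^N=\mathrm{id}$, and $[0,1)$ decomposes into finitely many Rokhlin towers: collections of intervals $T_0,\dots,T_{p-1}$ of a common length with $f(T_i)=T_{i+1}$, indices read modulo $p$. On each tower I would define $b$ by $b|_{T_i}=$ the translation carrying $T_i$ onto $T_{-i\bmod p}$. A direct check gives $b^2=\mathrm{id}$ (the two translations relating $T_i$ and $T_{-i}$ are mutually inverse) and, for $x\in T_i$, both $bf(x)$ and $f^{-1}b(x)$ land in $T_{-i-1}$ with the same translation amount $a_{-i-1}-a_i$, so $bf=f^{-1}b$ on every floor. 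The decisive point is that this flip uses \emph{only translations}, hence $b$ is a genuine IET (orientation is never reversed); on fixed intervals one takes $b=\mathrm{id}$. Assembling the towers yields an involution $b\in\mathcal G$ with $bfb^{-1}=f^{-1}$, so $f$ is strongly reversible. I do not expect this part to be the obstacle.

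\medskip

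\textbf{Part (1).} Writing $\langle f,h\rangle$ in normal form, every element is $g=f^m h^n$, and the relation $fh^n=h^n f^{(-1)^n}$ shows that $g\in C(f)$ when $n$ is even, while $g$ reverses $f$ and $g^2=h^{2n}$ when $n$ is odd. Suppose $g(x_0)=x_0$. Since $g$ is an IET it is a translation on a right half-neighborhood $[x_0,x_0+\varepsilon)$, and $g(x_0)=x_0$ forces that translation to be $0$, so $g=\mathrm{id}$ on an interval $U\ni x_0$. If $n$ is even, $g$ commutes with $f$, hence $g=\mathrm{id}$ on the dense set $\bigcup_k f^k(U)$ (using minimality of $f$), so $g=\mathrm{id}$. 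If $n$ is odd, the relation $gf=f^{-1}g$ propagates $g|_U=\mathrm{id}$ to $g|_{f^k(U)}=f^{-2k}$ for all $k$; since $f$ preserves Lebesgue measure, the positive-measure sets $f^k(U)$ cannot be pairwise disjoint, so $f^m(U)\cap U\neq\varnothing$ for some $m\neq 0$, and any point of this intersection satisfies $f^{2m}(w)=w$, contradicting that $f$ is minimal. Thus $g$ has no fixed point, and the action is free. The only delicate step is this measure-and-propagation argument, which I regard as routine once minimality is invoked.

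\medskip

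\textbf{Part (2).} Assume for contradiction that a minimal $f$ is reversed by some $h$ of infinite order. Because $f$ is minimal it has infinite order, so by Lemma \ref{FNBS} the map $\langle f,h\rangle$ is a faithful action of $BS(1,-1)$; by part (1) it is free, and Theorem \ref{Th3} then puts it, after a $PL\circ IET$-conjugacy, inside some $G_n$. Conjugacy preserves minimality and infinite order, so I may assume $f=(\alpha_f,\sigma_f)\in G_n$ is minimal and $h\in G_n$ reverses it. Minimality forces $\sigma_f$ to be an $n$-cycle (otherwise a proper union of the intervals $I_i$ would be $f$-invariant), and then $f^n$ is the simultaneous rotation of all the $I_i$ by the common angle $\sum_j\alpha_j(f)$, which is \emph{central} in $G_n$. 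Consequently $h f^n h^{-1}=f^n$; but $h$ reverses $f$, so $h f^n h^{-1}=f^{-n}$, whence $f^{2n}=\mathrm{id}$ and $f$ has finite order, contradicting minimality. This is the part I expect to be the crux: the whole force of the statement is that one must first descend into $G_n$ via the classification (Theorem \ref{Th3}), and only there does the centrality of the power $f^n$ of a minimal element become available and produce the contradiction. Parts (1) and (2) together with Lemma \ref{FNBS} then yield the announced nonexistence of free faithful $BS(1,-1)$-actions with $f$ minimal.
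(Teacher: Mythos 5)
Your proof is correct, and it keeps the paper's overall skeleton (a fixed-point analysis for freeness, then descent into $G_n$ via Theorem \ref{Th3}), but it swaps in different key ingredients at all three steps. For part (3), the paper conjugates each periodic component into some $G_m$ by a homothecy sending the floors onto the intervals $I_k$ and then invokes Theorem \ref{Th1} to get strong reversibility; your Rokhlin-tower flip $T_i \mapsto T_{-i \bmod p}$ constructs the reversing involution explicitly, which is the same underlying idea (your flip is exactly the involution Theorem \ref{Th1} produces when $\alpha=0$ and $\sigma$ is a cycle) but self-contained, and it even avoids the paper's preliminary step of rearranging the floors into consecutive intervals. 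For part (1), the paper's Lemma \ref{FixhPerf} handles the reversing case topologically, using local density of the $f$-orbit to produce a sequence $f^{p_n}(x)\to x_+$ forcing periodicity, whereas you use Lebesgue-measure pigeonhole on the sets $f^k(U)$ together with the propagation $g\vert_{f^k(U)}=f^{-2k}$; both arguments are sound, yours trading topological recurrence for measure preservation. The most substantive divergence is part (2): the paper concludes by applying Theorem \ref{Th2} (reversible implies strongly reversible in $G_n$) and then Corollary \ref{FO} (strongly reversible with cyclic $\sigma_f$ implies finite order), while you note that minimality forces $\sigma_f$ to be an $n$-cycle, so that $f^n$ is the simultaneous rotation of all the $I_i$ by $\sum_j \alpha_j(f)$, hence central in $G_n$, and the reversing relation then yields $f^{2n}=\mathrm{Id}$ immediately. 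This bypasses the whole reversibility classification inside $G_n$ and is the more elementary endgame; what the paper's route buys is the reuse of machinery it has already developed, plus the extra conclusion that the conjugated $f$ is strongly reversible in $G_n$, which it exploits again in the proof of Theorem \ref{Th4}.
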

Note that Item (3)  is not true in $G_n$ (see item (1) of Corollary \ref{FO}).

The last two results will be combined with  a dynamical decomposition (see Proposition \ref{deco})  for proving

\begin{theo}\label{Th4}
Let $f\in \mathcal G$ reversible in $\mathcal G$ then $f$ is  reversible in $\mathcal G$ by a finite order element $h$. Moreover $h$ is either an involution or its order is a multiple of $4$.
\end{theo}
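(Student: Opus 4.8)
The plan is to reduce to the dynamical decomposition of $f$ and treat the periodic and minimal parts separately, assembling a finite‑order reverser piece by piece.

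First I would invoke Proposition \ref{deco} to write $[0,1)$ as a finite disjoint union $P\sqcup M_1\sqcup\cdots\sqcup M_k$ of $f$‑invariant sets, where $f$ is periodic on $P$ and minimal on each $M_i$. Since invariance, periodicity and minimality are all insensitive to replacing $f$ by $f^{-1}$, the map $f^{-1}$ admits exactly the same decomposition. Consequently any $h$ with $hfh^{-1}=f^{-1}$ satisfies $h(P)=P$ and permutes the minimal components $M_1,\dots,M_k$, with each restriction of $h$ between two components being an IET that conjugates $f$ there to $f^{-1}$ on the image. I will then build a new reverser $h'$ defined piecewise on $P$ and on the $h$‑cycles of the $M_i$, each piece an IET reversing $f$ on its (invariant) support; the global $h'$ is then a genuine IET reversing $f$, whose order is the least common multiple of the orders of the pieces.

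On $P$, Theorem \ref{Th4.1}(3) furnishes an involution reversing $f|_P$. On the minimal part I decompose the permutation induced by $h$ into cycles $M_{i_1}\to M_{i_2}\to\cdots\to M_{i_\ell}\to M_{i_1}$. If $\ell$ is even I pair the components consecutively and reverse each pair $(M_{i_{2j-1}},M_{i_{2j}})$ by the involution that equals the relevant restriction of $h$ on the first component and its inverse on the second; a direct check, using that $h$ reverses $f$, shows this is an involution reversing $f$ on the pair, so the whole even cycle is reversed by an involution. If $\ell$ is odd I pair the first $\ell-1$ components in the same way and am left with one component $M$; here $h^{\ell}$ maps $M$ to itself and conjugates $f|_M$ to $f^{(-1)^{\ell}}|_M=(f|_M)^{-1}$, so $h^{\ell}|_M$ reverses the minimal IET $f|_M$ and hence has finite order by Theorem \ref{Th4.1}(2).

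It remains to control the orders, which is the crux of the ``moreover'' statement. If $g$ is a finite‑order reverser of an infinite‑order IET, of order $m$, then $g^{m}fg^{-m}=f^{(-1)^{m}}=f$ forces $m$ even, and $g^{m/2}fg^{-m/2}=f^{(-1)^{m/2}}$ shows that $g^{m/2}$ is an involution reversing $f$ exactly when $m/2$ is odd. Applying this to each leftover self‑reverser $g=h^{\ell}|_M$: if $m\equiv 2\pmod 4$ I replace $g$ by the involution $g^{m/2}$, while if $4\mid m$ I keep $g$, a reverser of order divisible by $4$. Thus every piece of $h'$ is either an involution or of order divisible by $4$, so the order of $h'$—the lcm of these together with the order‑$2$ contributions from $P$ and from the even cycles—is either $2$ or a multiple of $4$. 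The main obstacle is precisely organizing the permutation of the minimal components so that the assembled order cannot land in the forbidden range (even, larger than $2$, but not divisible by $4$): a naive attempt to reverse a whole cycle as a single $\ell$‑cycle introduces a spurious factor equal to the cycle length, which is why odd cycles must be broken into involution‑swapped pairs together with one self‑reversed component rather than handled as one long cycle.
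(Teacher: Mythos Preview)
Your argument is correct and follows the same overall strategy as the paper (decompose, reverse each piece, assemble), but the organization is genuinely different. The paper invokes the full strength of Proposition~\ref{deco}, which already groups the minimal components into $h$-invariant packets and distinguishes four cases: periodic, free faithful $BS(1,-1)$-action, non-faithful action with $h^{p_i}|_{M_i}=Id$, and an even-length $h$-cycle of minimal components. For the free faithful case the paper goes through Theorems~\ref{Th3} and~\ref{Th2} (conjugation into $G_n$, then strong reversibility there) together with Remark~\ref{Raffine}. You instead work straight from the Arnoux--Keane--Mayer decomposition plus Lemma~\ref{comp}, split only on the parity of the $h$-cycle length, and for odd cycles appeal to Theorem~\ref{Th4.1}(2) as a black box to force $h^\ell|_M$ to have finite order. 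This is tidier and avoids re-deriving the freeness/faithfulness dichotomy; note however that Theorem~\ref{Th4.1}(2) is itself proved in the paper via Theorems~\ref{Th2} and~\ref{Th3}, so you are repackaging that machinery rather than bypassing it. Your even-cycle involution is exactly the paper's $h_0$ from case~4, and your order-reduction $g\mapsto g^{m/2}$ when $m\equiv 2\pmod 4$ is the same move the paper makes in case~3. One small point of citation: what you describe at the outset (``$P\sqcup M_1\sqcup\cdots\sqcup M_k$ with $f$ minimal on each $M_i$'') is the Arnoux--Keane--Mayer theorem together with Lemma~\ref{comp}, not Proposition~\ref{deco}; the latter already builds in the $h$-orbit structure and the four-way case split that you go on to carry out by hand.
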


\begin{coro} \label{corpiper}
Any reversible IET can be written as a product of $2$ periodic elements.
\end{coro}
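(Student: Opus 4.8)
The plan is to derive the statement directly from Theorem~\ref{Th4}, which supplies a \emph{finite order} reverser. Let $f \in \mathcal{G}$ be reversible. By Theorem~\ref{Th4} there is a finite order element $h \in \mathcal{G}$, of order $m$ say, with $hfh^{-1}=f^{-1}$, equivalently $hf=f^{-1}h$. I would then exhibit $f$ as the product $f=(fh)\,h^{-1}$ and argue that both factors are periodic.

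The factor $h^{-1}$ is periodic since $h$ has order $m$. For the factor $g:=fh$, I would compute its square using the reversing relation:
\begin{equation*}
g^2=(fh)(fh)=f\,(hf)\,h=f\,(f^{-1}h)\,h=h^2.
\end{equation*}
Hence $g^{2m}=(h^2)^m=h^{2m}=1$, so $g=fh$ is periodic, of order dividing $2m$. Therefore $f=g\,h^{-1}$ is a product of the two periodic elements $fh$ and $h^{-1}$, which is exactly the assertion.

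There is essentially no technical obstacle remaining once Theorem~\ref{Th4} is in hand: the whole difficulty has been absorbed into producing a finite order reverser, and the corollary reduces to a one-line group-theoretic identity. The only point deserving care is the verification that $g=fh$ has finite order; this is immediate from the computation $g^2=h^2$, but it is worth stressing that it relies crucially on $h$ being periodic. Indeed, an arbitrary reverser would still yield $g^2=h^2$ yet give no finiteness conclusion, so it is precisely the strengthening provided by Theorem~\ref{Th4} (that $f$ is reversed by a finite order element) that makes the argument work.
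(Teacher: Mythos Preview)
Your proof is correct and follows essentially the same approach as the paper: the paper factors $f = h\,(h^{-1}f)$ and computes $(h^{-1}f)^2 = h^{-2}$ via the reversing relation, while you factor $f = (fh)\,h^{-1}$ and compute $(fh)^2 = h^2$; both conclude immediately from the finiteness of the order of $h$ guaranteed by Theorem~\ref{Th4}. The two arguments are symmetric variants of the same one-line identity.
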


Related to the second question of O'Farrell-Short, it has been proved by Vorobets (\cite{Vo}) that the subgroup of $\mathcal G$ generated by its involutions coincides with $\mathcal G_0$, the subgroup of  $\mathcal G$ consisting in IETs having zero SAF-invariant (definition and properties of  SAF-invariant will  be given in section 8). Unfortunately, Vorobets tools do not give upper bounds for the number of involutions that are needed.  Theorem \ref{Th4.1} Item (3) and Corollary \ref{corpiper} directly imply the following 
\begin{coro} \label{perrev}
\item Any periodic element of $\mathcal G$ is the product of at most $2$ involutions. 
\item Any reversible IET can be written of $4$ involutions.
\end{coro}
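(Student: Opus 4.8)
The plan is to combine the two results cited just before the statement with the elementary group-theoretic equivalence between strong reversibility and being a product of two involutions, so that the corollary becomes a short bookkeeping argument. The only real content is to make this equivalence explicit; everything else is assembly.

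First I would record the equivalence. Suppose $f=\tau_1\tau_2$ with $\tau_1^2=\tau_2^2=\mathrm{id}$; then $\tau_1 f\tau_1=\tau_1(\tau_1\tau_2)\tau_1=\tau_2\tau_1=(\tau_1\tau_2)^{-1}=f^{-1}$, so $f$ is strongly reversible, reversed by the involution $\tau_1$. Conversely, assume $b$ is an involution with $bfb=f^{-1}$, which rewrites as $fb=bf^{-1}$. Setting $c=bf$ and using this relation, $c^2=(bf)(bf)=b(fb)f=b(bf^{-1})f=b^2f^{-1}f=\mathrm{id}$, so $c$ is also an involution; since $b=b^{-1}$, the identity $f=bc$ exhibits $f$ as a product of two involutions. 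Allowing the degenerate cases $f=\mathrm{id}$ and $f$ itself an involution, this gives the bound "at most two involutions''.

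For Item (1), I would simply invoke Theorem \ref{Th4.1}(3), which asserts that every periodic IET is strongly reversible. By the equivalence just established, each periodic element of $\mathcal G$ is therefore a product of at most two involutions. For Item (2), I would start from Corollary \ref{corpiper}, which expresses an arbitrary reversible IET $f$ as a product $f=p_1p_2$ of two periodic elements. Applying Item (1) to each factor writes $p_1=\tau_1\tau_2$ and $p_2=\tau_3\tau_4$ with each $\tau_i$ an involution (some possibly trivial), whence $f=\tau_1\tau_2\tau_3\tau_4$ is a product of at most four involutions.

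Since the text announces that these two inputs "directly imply'' the corollary, there is no genuine obstacle here: the proof is immediate once Theorem \ref{Th4.1}(3) and Corollary \ref{corpiper} are available. The only point requiring the slightest care is the elementary lemma linking strong reversibility to a factorization into two involutions, together with the bookkeeping that keeps the counts at "at most two'' per periodic factor and hence "at most four'' overall.
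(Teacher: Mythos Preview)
Your proposal is correct and matches the paper's own argument: the paper does not write out a separate proof but simply states that Theorem~\ref{Th4.1}(3) and Corollary~\ref{corpiper} directly imply the result, relying on the equivalence ``strongly reversible $\Leftrightarrow$ product of two involutions'' already recorded in Properties~\ref{Pro2}. Your explicit unpacking of that equivalence and the bookkeeping for the factor count is exactly the intended reasoning.
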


Finding an upper bound for the number of involutions that are needed to write non periodic r-IETs of $\mathcal G_0$ with $r>3$ is a delicate problem. As indicated in item (1) of the following theorem, our   first example of a non strongly reversible element of $\mathcal G_0$ is a product of two restricted rotations of pairwise disjoint supports.

\begin{defi}
An  IET $g$ whose support is an interval $I$ is a {\bf restricted rotation} if there exists a direct affine map from $I$ to $[0,1)$  that conjugates $g_{\vert I}$ to a 2-IET.
\end{defi}

In section 7, we show that if a product of two restricted rotations of supports $I_1$ and $I_2=[0,1)\setminus I_1$ belongs to  $\mathcal G_0$ then $\vert I_1 \vert$ is a rational number, where  $\vert J \vert$ denotes the length of the interval $J$.

Under an additional assumption on $\vert I_1\vert$, item (2) of next Theorem gives a bound for the number of involutions.

\begin{theo}\label{Th7} \ 
\begin{enumerate}
\item Any $3$-IET  that belongs to $\mathcal G_0$ is periodic so is the product of two involutions.
\item There exists  a 4-IET  $f\in \mathcal G_0$ that is not reversible. 
\item Any element of $\mathcal G_0$ that is a product of 2 restricted rotations with respective supports $I_1$ and $I_2=[0,1)\setminus I_1$, satisfying $\vert I_1 \vert = \frac{p}{p+1}$ with $p\in \mathbb N^*$, can  be written as a product of $6$ involutions.
\end{enumerate}
\end{theo}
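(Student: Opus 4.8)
For item (1), I would use throughout that the SAF invariant is an additive, conjugation-invariant homomorphism whose kernel is $\mathcal G_0$, and that a periodic IET is a product of at most two involutions (Corollary~\ref{perrev}). A non-periodic $3$-IET possesses, in its canonical dynamical decomposition (Proposition~\ref{deco}), a minimal component which is either an irrational restricted rotation or a minimal $3$-IET; since a $3$-IET with cyclic combinatorics is already a rotation, the only genuinely new case is the symmetric combinatorics. In each case the SAF invariant of that component is nonzero: a rotation by $\beta$ on a support of length $\ell$ has SAF equal to $2(\ell\wedge\beta)\neq 0$ because $\beta\notin\mathbb Q$, while for the symmetric $3$-IET with lengths $a,b,c$, $a+b+c=1$, a direct computation gives $2\bigl(a\wedge b+(a+b)\wedge 1\bigr)$, whose vanishing forces a $\mathbb Q$-linear relation among $1,a,b$ that is incompatible with minimality. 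As SAF is additive and conjugation invariant, an element of $\mathcal G_0$ can have no minimal component, hence is periodic, and Corollary~\ref{perrev} finishes the proof.

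For item (2) I would exhibit $f=g_1g_2$, the product of two restricted rotations supported on the disjoint intervals $I_1=[0,\tfrac13)$ and $I_2=[\tfrac13,1)$, with irrational amounts $\beta_1,\beta_2$ chosen so that $\beta_1+2\beta_2\in\mathbb Q$ (easily arranged). Then $f$ is a genuine $4$-IET and $\mathrm{SAF}(f)=2\bigl(\tfrac13\wedge\beta_1+\tfrac23\wedge\beta_2\bigr)=2\bigl(1\wedge(\tfrac13\beta_1+\tfrac23\beta_2)\bigr)=0$, so $f\in\mathcal G_0$. Its two minimal components are $I_1$ and $I_2$, of the distinct lengths $\tfrac13$ and $\tfrac23$. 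If some $b$ reversed $f$, then $b$ would carry the decomposition of $f$ onto that of $f^{-1}$ while preserving lengths (Proposition~\ref{deco}); as the lengths differ this forces $b(I_1)=I_1$ up to a null set, so $b$ restricts to a conjugacy from $f|_{I_1}=R_{\beta_1}$ to $f^{-1}|_{I_1}=R_{-\beta_1}$. Conjugacy invariance of SAF then gives $2(\tfrac13\wedge\beta_1)=-2(\tfrac13\wedge\beta_1)$, i.e. $\tfrac13\wedge\beta_1=0$ and $\beta_1\in\mathbb Q$, a contradiction; hence $f$ is not reversible.

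For item (3) the goal is to write $f$ as a product of three periodic IETs, each of which is a product of at most two involutions by Corollary~\ref{perrev}. Put $L=\tfrac1{p+1}$, so that $[0,1)$ is cut into equal pieces $J_1,\dots,J_{p+1}$ with $I_1=J_1\cup\dots\cup J_p$ and $I_2=J_{p+1}$. Writing the rotation amount on $I_1$ as $\beta_1=aL+\gamma$ with $0\le\gamma<L$, the shift of $J_1,\dots,J_p$ by $a$ places is a periodic permutation $P_1\in G_{p+1}$, and $Q:=P_1^{-1}f$ rotates $I_1$ by $\gamma$ and $I_2$ by $\delta:=\beta_2$ with $0\le\gamma,\delta<L$; the hypothesis $f\in\mathcal G_0$ reads $\tfrac1{p+1}\bigl(1\wedge(p\gamma+\delta)\bigr)=0$, i.e. $p\gamma+\delta\in\mathbb Q$. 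Using the commensurability $|I_1|=p|I_2|$ I would then introduce the element $C\in G_{p+1}$ whose permutation is the $(p+1)$-cycle and whose within-rotations are $(\gamma,\dots,\gamma,\delta)$ on $(J_1,\dots,J_p,J_{p+1})$; since a point cycling once through all pieces accumulates the rotation $p\gamma+\delta$, we get $C^{\,p+1}=R_{p\gamma+\delta}$ on each $J_i$, a rational rotation, so $C$ has finite order. This yields the candidate factorization $f=P_1\,C\,(C^{-1}Q)$.

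The crux is to prove that the correction $P_3:=C^{-1}Q$ is again periodic. Here the equal-length subdivision is decisive: because $C$ rotates each piece by exactly the amount $Q$ does, the two irrational rotations cancel along the orbits of $C^{-1}Q$, which then close up after a bounded number of steps tied to the $(p+1)$-cycle. I would establish this by a direct return-map computation on the pieces $J_i$, showing every orbit of $C^{-1}Q$ is finite (a sample computation for $p=2$ indeed produces orbits of period $p+1=3$, the irrational parts summing to a rational displacement). Granting periodicity of $P_3$, the identity $f=P_1\,C\,P_3$ exhibits $f$ as a product of three periodic IETs, hence as a product of at most six involutions. The main obstacle is precisely this orbit-closing verification; everything else is bookkeeping with the SAF invariant and the group $G_{p+1}$.
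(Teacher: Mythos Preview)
Your item~(2) is essentially the paper's argument: two restricted rotations on supports of distinct lengths, with the rational relation forcing $SAF=0$; any reverser must fix each support, and an irrational restricted rotation cannot be conjugate to its inverse. Only cosmetic points: the extraneous factor $2$ in your SAF formulas, and you should cite Lemma~\ref{comp} (not Proposition~\ref{deco}) for ``a reverser permutes minimal components''.

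In item~(1) there is a genuine gap. Your decomposition argument eventually reduces to the assertion that for a minimal $3$-IET with symmetric combinatorics the vanishing of $(a+b)\wedge(1-a)$ ``forces a $\mathbb Q$-linear relation incompatible with minimality''. That implication is precisely what has to be proved; it is not an immediate consequence of Keane's criterion. The paper establishes it by computing the first return map of $f$ to $[0,\lambda_1+\lambda_2)$, which turns out to be the rotation by $\frac{1-2\lambda_1-\lambda_2}{\lambda_1+\lambda_2}=-1+\frac{1-\lambda_1}{\lambda_1+\lambda_2}$, hence rational exactly when $SAF(f)=0$. Your route also silently uses that a $3$-IET has at most one minimal component (so that periodic and minimal contributions cannot cancel in $SAF$). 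The paper bypasses both issues by treating each admissible permutation directly: if $\sigma$ fixes $1$ or $3$, or is a $3$-cycle, then $f$ is a (restricted) rotation; the only remaining case is $\sigma=(1,3)$, handled by the first return computation above.

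For item~(3) your route differs from the paper's and, with the orbit computation filled in, is correct. The paper rescales by the homothety $H$ of ratio $p{+}1$, then composes with a periodic $RP$ correcting \emph{both} rotation amounts so that the resulting $g$ rotates $[0,p)$ by $\alpha$ and $[p,p{+}1)$ by exactly $-p\alpha$; the cyclic element $i\in\widehat G_{p+1}$ with angle vector $(-\alpha,\dots,-\alpha,p\alpha)$ then satisfies $i^{p+1}=\mathrm{Id}$, and one checks that $i\circ g$ has \emph{integer} translation constants, so its periodicity is immediate. Your variant stays on $[0,1)$, corrects only the first rotation via $P_1$, and takes $C\in G_{p+1}$ with angle vector $(\gamma,\dots,\gamma,\delta)$; then $C^{p+1}$ is a rational rotation and $C$ is periodic, but the periodicity of $P_3=C^{-1}Q$ requires an actual orbit analysis. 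That analysis does go through: in the $J_i$-circle coordinates one finds that $C^{-1}Q$ fixes the arcs $\xi\in[L-\gamma,L)$ of $J_1,\dots,J_{p-1}$, has a $2$-cycle between $J_p\,(\xi\ge L-\gamma)$ and $J_{p+1}$, and sends every remaining point around a $(p{+}1)$-cycle $J_p\to J_{p-1}\to\cdots\to J_1\to J_{p+1}\to J_p$ in which the $\pm(\gamma-\delta)$ shifts cancel; so $(C^{-1}Q)^{\mathrm{lcm}(2,p+1)}=\mathrm{Id}$. The paper's choice buys a one-line periodicity check (integer translations), at the cost of the extra rescaling and the second correction in $RP$; your choice avoids the rescaling but transfers the work to verifying periodicity of $P_3$, and your heuristic ``irrational parts summing to a rational displacement'' is not quite the mechanism---the relation $p\gamma+\delta\in\mathbb Q$ is used only to make $C$ periodic, while $P_3$ is periodic because of the exact cancellation $(\gamma-\delta)+(\delta-\gamma)=0$ along each cycle.
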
 

\medskip

In the last section of this paper, we are interested in actions by IET of some torsion-free groups containing reversible elements. Using properties of the Poincar\'e rotation number, it is easy to check that a group containing an infinite order reversible element can not act freely by circle homeomorphisms. This is no longer true for actions by IET. More precisely, we prove  

\begin{theo} \label{related} \
\begin{enumerate}
\item The Baumslag-Solitar group $BS(1,-1)$ acts freely and minimally by IET.
\item The crystallographic group $C_1 =\langle a, b \ | \ b a^2  b^{-1} = a^{-2},  a b^2 a^{-1} =b^{-2}\rangle$ acts freely and minimally by IET.
\item The group $C_2 =\langle a, b, c \  | \ ab a^{-1} = b^{-1}, bcb^{-1} = c^{-1} \rangle$  does not admit faithful  minimal action by IET.
\end{enumerate}
\end{theo}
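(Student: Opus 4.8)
The plan is to settle the two positive items (1) and (2) by exhibiting explicit actions inside the groups $G_n$, and to prove the negative item (3) by contradiction, feeding the reversal relations into Theorem \ref{Th4.1} and the classification of free actions. For (1), I would place the action in $G_2$: split $[0,1)=C_0\sqcup C_1$ into two circles of length $\tfrac12$, let $a$ rotate $C_0$ by an irrational $\alpha$ and $C_1$ by $-\alpha$, and let $b$ swap $C_0$ and $C_1$ with an auxiliary translation by $\beta$, so that $\sigma_b=(1\,2)$. A one-line computation gives $bab^{-1}=a^{-1}$, and since $b^2$ is the rotation by $2\beta$ on each circle, both $a$ and $b$ have infinite order; Lemma \ref{FNBS} then yields $\langle a,b\rangle\cong BS(1,-1)$. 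Freeness I would read off the normal form $a^mb^n$: if $n$ is odd the element interchanges $C_0$ and $C_1$, hence has no fixed point whatsoever, while if $n=2k$ it is the rotation $x\mapsto x\pm m\alpha+2k\beta$, fixed-point-free as soon as $1,\alpha,\beta$ are $\mathbb{Q}$-independent. Minimality is immediate, as the $\langle a\rangle$-orbit of any point is dense in its circle and $b$ carries one circle onto the other.

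For (2), the natural model uses four circles of length $\tfrac14$ indexed by $(\mathbb{Z}/2)^2$, so that the action sits in $G_4$: let $a$ act by $(i,j)\mapsto(i,j+1)$ with a rotation $\alpha_{(i,j)}$ on each piece and $b$ by $(i,j)\mapsto(i+1,j)$ with a rotation $\beta_{(i,j)}$. Then $a^2,b^2,(ab)^2$ act by pure rotations, the two relations reduce to the linear constraints $\alpha_{(0,0)}+\alpha_{(0,1)}=-(\alpha_{(1,0)}+\alpha_{(1,1)})$ and $\beta_{(0,0)}+\beta_{(1,0)}=-(\beta_{(0,1)}+\beta_{(1,1)})$, and the rotation vectors of $x=a^2$, $y=b^2$, $z=(ab)^2$ come out proportional to the three nontrivial characters of $(\mathbb{Z}/2)^2$, with scalars $A_0,B_0,C$ built from the parameters. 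Faithfulness follows by matching the extension $1\to\mathbb{Z}^3\to C_1\to(\mathbb{Z}/2)^2\to1$ (with $\mathbb{Z}^3=\langle x,y,z\rangle$) against the permutation homomorphism, and minimality as in (1). The delicate point here — the main obstacle for (2) — is freeness: such a pure rotation fixes an entire circle as soon as two of its four coordinates vanish, and the naive choice leading to $C=B_0-A_0$ makes $xy^{-1}z$ fix a circle; the cure is to use the remaining freedom in the $\alpha_{(i,j)},\beta_{(i,j)}$ to make $1,A_0,B_0,C$ linearly independent over $\mathbb{Q}$, which forces every nontrivial element of $\langle x,y,z\rangle$ to rotate all four circles nontrivially.

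For (3), I would argue by contradiction. A faithful minimal action makes the two copies $\langle a,b\rangle$ and $\langle b,c\rangle$ of $BS(1,-1)$ faithful, so by Lemma \ref{FNBS} the generators $a,b,c$ have infinite order; Theorem \ref{Th4.1}(2) then shows that $c$ (reversed by the infinite-order $b$) and $b$ (reversed by $a$) are not minimal. Using the decomposition of Proposition \ref{deco}, $c$ splits $[0,1)$ into a periodic part and finitely many minimal components, and because $b$ conjugates $c$ to $c^{-1}$ while $b^2$ commutes with $c$, the group $\langle b,c\rangle$ preserves this canonical decomposition, permuting the closures of the minimal components. I would then localize on the closure $\overline M$ of one minimal component of $c$, on which $c$ is minimal, and try to exhibit there a minimal IET reversed by an infinite-order element, contradicting Theorem \ref{Th4.1}(2).

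The hard part will be the generator $a$: the presentation imposes no relation between $a$ and $c$, so $a$ need not respect the decomposition of $c$, and a restriction such as $b|_{\overline M}$ could a priori have finite order, blocking a clean appeal to Theorem \ref{Th4.1}(2). To bypass this I would instead invoke the classification of free actions of finitely generated torsion-free groups containing $\mathbb{Z}^2$ (the extension of Theorem \ref{Th3}). Indeed $C_2$ is torsion-free, being the amalgam $BS(1,-1)\ast_{\langle b\rangle}BS(1,-1)$ over $\mathbb{Z}$, and it contains $\mathbb{Z}^2=\langle b^2,c^2\rangle$, yet it is not virtually abelian: the edge group $\langle b\rangle$ has infinite index in both factors, so $C_2$ contains a free group of rank $2$. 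This is incompatible with the virtual abelianness that the classification imposes (in the spirit of Remark \ref{cons}). The genuine crux is therefore the bridge from a \emph{minimal} faithful action to the \emph{free} action demanded by the classification, which I expect to obtain by combining Theorem \ref{Th4.1}(1) with the localization onto the minimal components of $c$.
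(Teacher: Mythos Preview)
Your constructions for items (1) and (2) are in the same spirit as the paper's --- explicit examples inside some $G_n$ --- and your $G_2$ model for $BS(1,-1)$ is in fact simpler than the paper's $G_4$ model. For $C_1$ you use exactly the paper's framework (four circles indexed by $(\mathbb Z/2)^2$, the permutation part being the quotient map $C_1\to(\mathbb Z/2)^2$), and your warning about freeness is not idle: with the paper's own parameters $\alpha_a=(0,\alpha,-\alpha,0)$, $\alpha_b=(\beta,0,-\beta,0)$ one computes $\alpha_{a^2}=(-\alpha,\alpha,-\alpha,\alpha)$, $\alpha_{b^2}=(\beta,-\beta,-\beta,\beta)$, $\alpha_{(ab)^2}=(\beta-\alpha,\beta-\alpha,\alpha-\beta,\alpha-\beta)$, hence the nontrivial element $a^2b^{-2}(ab)^2$ has rotation vector $(-2\alpha,\,2\beta,\,0,\,2\alpha-2\beta)$ and fixes $I_3$ pointwise. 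Your cure --- using the remaining freedom in $\alpha_a,\alpha_b$ so that the three scalars attached to $a^2,b^2,(ab)^2$ are, together with $1$, independent over $\mathbb Q$ --- is exactly what is needed and is easy to arrange.

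For item (3), however, there is a genuine gap. Your final strategy is: minimal faithful $\Rightarrow$ free, then Theorem~\ref{extfree} forces $\rho(C_2)$ into some $G_n$, hence $C_2$ is virtually abelian, contradicting $\mathbb F_2\subset C_2$. The last two steps are fine, but the first --- which you yourself flag as ``the genuine crux'' --- is neither carried out nor, as far as I can see, obtainable from the tools you list. Theorem~\ref{Th4.1}(1) yields freeness of $\langle f,h\rangle$ only when the single IET $f$ is minimal; here only the global $C_2$-action is assumed minimal, while $b$ and $c$ are precisely \emph{not} minimal IETs (by Theorem~\ref{Th4.1}(2), since each is reversed by an infinite-order element). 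Localizing to a minimal $c$-component $M$ does not help: $a$ has no relation with $c$ and need not respect $M$, and $b|_M$ may have finite order, so no infinite-order reverser of $c|_M$ is produced. In general, minimal faithful IET actions need not be free.

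The paper does not attempt this bridge. Instead it works directly on the $c$-components: for each minimal component $M$ some even power $b^r$ returns to $M$, and the crucial point is that $\langle b^r,c\rangle\cong\mathbb Z^2$ cannot act freely on $I$ --- not by any abstract freeness principle, but because if it did, Theorem~\ref{Th3} would conjugate a power of $c$ to a product of disjoint restricted rotations, and by a result of Novak (\cite{No2}, see also \cite{DFG} Theorem~3.6) such an IET cannot lie in a nonabelian free subgroup, contradicting $\langle a,c\rangle\cong\mathbb F_2$. The resulting relation $b^{rp}c^q=\mathrm{Id}$ on $M$ then yields, according to whether $q\neq0$ or $q=0$, either a commutator identity $[a^l,c^n]=\mathrm{Id}$ or a torsion identity $b^p=\mathrm{Id}$ on the $\langle b\rangle$-saturate of $M$; assembling these produces two invariant pieces $I_1,I_2$, and faithfulness forces both to be proper. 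The ingredient your plan is missing is this Novak obstruction on free subgroups; it is what substitutes for the unavailable ``minimal $\Rightarrow$ free'' step.
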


In item (3), since $\langle a,c\rangle$ is isomorphic to the non abelian free group of rank two $\mathbb F_2$ (see \cite{Ler}), the existence of faithful actions of $C_2$ by IET is related to Katok's question: Does $\mathcal G$ contain a subgroup isomorphic to $\mathbb F_2$ ?  A partial answer was given by Dahmani, Fujiwara and Guirardel: the group generated by a generic pair of IETs is not free (\cite{DFG1}). 

Notice that the actions constructed in items (1) and (2) are by elements of $G_4$. 

In addition any finitely generated group acting freely by IET is a subgroup of some  $G_n$: indeed  since translations commute, the orbit of any point under a finitely generated group $H$ of IETs has polynomial growth  therefore if $H$ acts freely it has polynomial growth so it is virtually nilpotent by \cite{Gro}. According to Novak \cite{No}, $H$ is virtually abelian. Finally, by Remark \ref{cons} $H$ is isomorphic to a subgroup of some $G_n$. In particular, neither $\mathbb F_2$ or $C_2$ can act freely by IET.

\medskip

We will prove a dynamical rigidity result for free actions by IET:
\begin{theo} \label{extfree}
Let $G$ be  a  finitely generated torsion free group that contains a copy of $\mathbb Z^2$.

If  $\rho : G \rightarrow \mathcal G$  is a free faithful action of $G$ by IET then  the image $\rho (G)$ is $PL\circ IET$-conjugated to a subgroup of some $G_n$.
\end{theo}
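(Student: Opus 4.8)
The plan is to combine the growth-type structure theory already invoked in Remark~\ref{cons} with the dynamical decomposition of Proposition~\ref{deco}, and then to upgrade, from a single copy of $\mathbb Z^2$ to all of $G$, the rigidity that underlies the classification of $BS(1,-1)$-actions in Theorem~\ref{Th3}. First I would reduce to the abelian situation: since $\rho$ is free and $G$ is finitely generated, the orbit-growth argument recalled after Theorem~\ref{related} applies (polynomial growth, hence virtually nilpotent by \cite{Gro}, hence virtually abelian by \cite{No}), so $G$ has a finite-index normal subgroup $A$ which, being finitely generated torsion free abelian, is isomorphic to $\mathbb Z^d$; moreover $d\geq 2$ since $A\cap\mathbb Z^2$ has finite index in the given $\mathbb Z^2\subseteq G$. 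I would then record that freeness together with torsion-freeness forbids periodic parts: if some nontrivial $\phi\in G$ had a periodic component, a suitable power $\phi^k$ would fix an interval pointwise, whence $\phi^k=\mathrm{id}$ by freeness, contradicting torsion-freeness. Thus by Proposition~\ref{deco} every nontrivial element of $G$ is a finite disjoint union of minimal components.

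Next I would build a decomposition common to the whole action. Because $A$ is abelian, the minimal components of its elements admit a common refinement into finitely many sets $C_1,\dots,C_m$, each a finite union of intervals, that $A$ permutes, with a finite-index subgroup $A_0\leq A$ fixing each $C_j$ and acting minimally on it. The crux is the following rigidity, which I regard as the technical engine already needed for Theorem~\ref{Th3}: a free action of $\mathbb Z^2$ by IET that is minimal on $C_j$ is $PL\circ IET$-conjugate to an action by rotations of an interval. Granting this on each $C_j$, a single PL-homeomorphism rescaling the components to equal length $1/n$, followed by an IET putting them in standard position, conjugates $A_0$ into the rotation part of some $G_n$.

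The remaining finite extensions $A/A_0$ and $G/A$ act by permuting these equal intervals and, on the non-abelian elements, by reversing rotations. Both are moves available inside $G_n$: Theorem~\ref{Th4} shows any reversing element can be taken of finite order and Theorem~\ref{Th4.1}(3) shows periodic maps are (strongly) reversible, so the reversing data is realized by the permutation-plus-rotation mechanism that Theorem~\ref{Th1} describes for $G_n$. Assembling the per-component conjugacies into one global $g=R\circ E$ then places $\rho(G)$ inside a $PL\circ IET$-conjugate of a single $G_n$, as desired.

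The main obstacle is the rigidity lemma used above: a single minimal IET is in general \emph{not} $PL\circ IET$-conjugate to a rotation, so one genuinely needs a second, independent commuting element acting freely, and this is exactly where the hypothesis $\mathbb Z^2\subseteq G$ is indispensable. I would establish it by analysing the centralizer of a minimal IET that is not of rotation type, which is virtually cyclic; hence a commuting element acting without fixed points and not a power of $f$ can coexist only when $f$ is conjugate to a rotation. This is the same mechanism that, for $BS(1,-1)$, forces the non-minimality in Theorem~\ref{Th4.1}(2) and drives Theorem~\ref{Th3}. The last piece of care is bookkeeping: choosing the conjugacies on the various $C_j$ so that they glue to one global PL-homeomorphism and a single IET, and verifying that after equalizing lengths the permutation and reversing parts of $G/A_0$ indeed land in one fixed $G_n$ rather than in a growing family.
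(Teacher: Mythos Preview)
Your overall architecture (reduce to virtually abelian via Gromov--Novak, place a $\mathbb Z^2$ into some $G_n$, then extend to $G$) matches the paper's, but two of your key steps do not work as written. First, the ``rigidity lemma'' is correct in spirit (it is essentially Proposition~\ref{z2}), but your proposed proof---that the centralizer of a minimal IET not of rotation type is virtually cyclic---is neither established in the paper nor an available black box; the paper instead uses Novak's discontinuity-growth invariant $\mathcal N_x$ (Properties~\ref{inva}) exactly as in the proof of Theorem~\ref{Th3}: freeness of the $\mathbb Z^2$-action forces $\mathcal N_x(f)=0$ for all $x$, hence a power of $f$ is $\mathcal G$-conjugate to a product of restricted rotations, and then Lemma~\ref{Lem7.2} produces the $PL$-conjugacy into $G_n$. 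Second, your treatment of the finite extension $G/A$ is mistaken: conjugation by $g\in G$ sends $a\in A$ to some $gag^{-1}\in A$, not in general to $a^{-1}$, so there is no ``reversing data'' to realize, and the appeals to Proposition~\ref{deco}, Theorem~\ref{Th4} and Theorem~\ref{Th4.1}(3) (all of which concern reversible maps) are irrelevant here. Also, Proposition~\ref{deco} is not the right citation for ``no periodic components''; that follows directly from Arnoux--Keane--Mayer together with freeness and torsion-freeness.

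The paper's extension step is both simpler and avoids your gluing worry entirely. Once the $\mathbb Z^2$ is conjugated by a single $A=R\circ E$ into $G_n$ (Proposition~\ref{z2}), one uses Proposition~\ref{CGn}: any IET without periodic points that commutes with an element already $A$-conjugated into $G_n$ is itself $A$-conjugated into the \emph{same} $G_n$. Since $G$ is virtually abelian, suitable powers $f_i^{p_i}$ of a generating set pairwise commute and in particular commute with an element of the $\mathbb Z^2$; applying Proposition~\ref{CGn} puts each $f_i^{p_i}$ into $G_n$, and a second application (or Lemma~\ref{VirtGn}) to the pair $(f_i^{p_i},f_i)$ puts $f_i$ itself into $G_n$. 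No per-component conjugacies, no gluing, and no reversibility results are needed.
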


\noindent \textbf{Acknowledgements.} We thank Juan Alonso for helpful discussions concerning Theorem \ref{Th1}. We acknowledge support from the MathAmSud Project GDG 18-MATH-08, the Labex CEMPI (ANR-11-LABX-0007-01), the University of Lille (BQR) and the I.F.U.M.


\section{Examples and general properties.}
We let the reader check the following basics facts of reversibility.
\subsection{Examples. } 
\subsubsection{Basic examples.}
Involutions are strongly reversible and  any product of two involutions $f=\sigma_1 \sigma_2$ is strongly reversible by $\sigma_1$ and $\sigma_2$.

\subsubsection{Actions of $BS(1,-1)$ by IET} The following elements  $a$ and $b$ of $G_4$ generate a faithful and free action of $BS(1,-1)$ provided that $\alpha$ and $\beta$ are rationally independent  irrational numbers. The graph of an element $f$ of $G_n$ is represented in $[0,1]\times [0,1]$ by indicating  in each square $ I_i \times I_{\sigma(i)}$ the corresponding angle $\alpha_i(f)$.
\begin{figure}[H]
    \scalebox{.53}{\begin{picture}(450,200)
\put(0,0){\line(1,0){200}} \put(0,200){\line(1,0){200}}
\put(0,0){\line(0,1){200}} \put(200,0){\line(0,1){200}}

\put(-5,-20){$0$}
\put( 50,-15){$\frac{1}{4}$}
\put(100,-20){${\frac{1}{2}}$}
\put(147,-15){$\frac{3}{4}$}
\put(195,-20){$1$}

\put(-15,50){$\frac{1}{4}$}
\put(-15,100){$\frac{1}{2}$}
\put(-15,150){$\frac{3}{4}$}
\put(-15,200){$1$}

\put(50,0){\dashbox(0,200){}}
\put(100,0){\dashbox(0,200){}}
\put(150,0){\dashbox(0,200){}}

\put(0,50){\dashbox(200,0){}}
\put(0,100){\dashbox(200,0){}}
\put(0,150){\dashbox(200,0){}}

\put (20,20){$-\alpha$}
\put (70,70){$\alpha$}
\put (120,120){$-\alpha$}
\put (170,170){$\alpha$}

\put ( 70,-50){IET \ \ $a$}

\put(240,0){\line(1,0){200}} \put(240,200){\line(1,0){200}}
\put(240,0){\line(0,1){200}} \put(440,0){\line(0,1){200}}

\put(235,-20){$0$}
\put(285,-15){${\frac{1}{4}}$}
\put(335,-20){${\frac{1}{2}}$}
\put(395,-15){${\frac{3}{4}}$}
\put(435,-20){$1$}


\put(290,0){\dashbox(0,200){}}
 \put(340,0){\dashbox(0,200){}}
\put(390,0){\dashbox(0,200){}}

\put(240,50){\dashbox(200,0){}} 
\put(240,100){\dashbox(200,0){}}
\put(240,150){\dashbox(200,0){}}

\put (260,170){$0$}
\put (310,120){$\beta$}
\put (360,70){$0$}
\put (410,20){$-\beta$}

\put ( 320,-50){IET \ \ $b$}
\end{picture}}
\end{figure} \ \  

\subsection{Properties.} \label{Pro2}
\begin{itemize}
\item An element $f$ is strongly reversible if and only if $f=\sigma_1 \sigma_2$ with $\sigma_1^2=\sigma_2^2=Id$.
\item If $ h_1$ and $h_2$ reverse the same $f$ then $h_2^{-1} h_1 $ commutes with $f$.
\item If  $f$ is $h$-reversible  then $f$ is $(hf^s)$-reversible for all $s\in \mathbb Z$.
\item If  $f$ is $h$-reversible  [resp. strongly $h$-reversible] then  $f^n$ is $h$-reversible  [resp. strongly $h$-reversible], for all $n\in \mathbb Z$.
\item If  $f$ is $h$-reversible then $h^{2p}$ commutes with $f$ and $h^{2p+1}$ reverses $f$,  for all $p\in \mathbb Z$.
\end{itemize}


\section{Strong Reversibility in $G_n$. Proof of Theorem  \ref{Th1}.}
\subsection{Necessary preliminaries.} 
\subsubsection{Action of the symmetric group $\mathcal S_n$ on vectors.}

The symmetric group $\mathcal S_n$ acts on $\mathbb R^n$ by permuting the coordinates leading to an action on the quotient space $\mathbb{S}_n ^n$, where $\mathbb{S}_n=[0,\frac{1}{n}]/{}_{0=\frac{1}{n}}$. 

\begin{defi} 
Given $\sigma \in \mathcal S_n$ and $\alpha=(\alpha_1,  \cdots , \alpha_n) \in \mathbb{S}_n^n$, we define $$\sigma(\alpha) = (\alpha_{\sigma(1)}, \cdots , \alpha_{\sigma(n)}).$$ We denote by $M_ \sigma$ its associated matrix with respect to the canonical basis of $\mathbb R^n$.
\end{defi}

\smallskip

\begin{clai} Let $\alpha=(\alpha_1, \cdots , \alpha_n)\in \mathbb{S}_n^n$ and $\sigma, \tau \in \mathcal S_n$. Then $\displaystyle (\sigma \tau ) (\alpha) = \tau (\sigma(\alpha))$. In other words, $\displaystyle M_{\sigma \tau} =M_\tau M_\sigma$.
\end{clai}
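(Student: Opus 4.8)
The plan is to reduce both assertions to a single coordinate computation, since the matrix identity is merely the operator form of the vector identity. First I would fix an index $i \in \{1, \dots, n\}$ and compute the $i$-th coordinate of the right-hand side $\tau(\sigma(\alpha))$. Writing $\beta = \sigma(\alpha)$, the definition gives $\beta_k = \alpha_{\sigma(k)}$ for every $k$; applying $\tau$ then yields $(\tau(\beta))_i = \beta_{\tau(i)} = \alpha_{\sigma(\tau(i))}$. Since $\sigma(\tau(i)) = (\sigma\tau)(i)$ under the standard function-composition convention, this coordinate equals $\alpha_{(\sigma\tau)(i)}$, which is exactly the $i$-th coordinate of $(\sigma\tau)(\alpha)$. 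As $i$ was arbitrary, the two vectors coincide, establishing $(\sigma\tau)(\alpha) = \tau(\sigma(\alpha))$.

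For the matrix reformulation I would then observe that $\sigma(\alpha) = M_\sigma \alpha$ for every $\alpha$, so the identity just proved reads $M_{\sigma\tau}\,\alpha = M_\tau M_\sigma\,\alpha$ for all $\alpha \in \mathbb{S}_n^n$; since a linear map is determined by its values on all vectors (letting $\alpha$ range over the canonical basis suffices), this gives $M_{\sigma\tau} = M_\tau M_\sigma$. Alternatively I would verify it directly on entries: the permutation matrix satisfies $(M_\sigma)_{ij} = 1$ if $j = \sigma(i)$ and $0$ otherwise, so that $(M_\tau M_\sigma)_{ik} = \sum_j (M_\tau)_{ij}(M_\sigma)_{jk}$ has a single surviving term at $j = \tau(i)$, collapsing to $\delta_{k,\,\sigma(\tau(i))} = \delta_{k,\,(\sigma\tau)(i)} = (M_{\sigma\tau})_{ik}$.

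The only subtlety — what passes for the main obstacle here — is the order reversal. Because the action is defined by precomposing the \emph{index} with $\sigma$, the coordinates of $\alpha$ are read through $\sigma$ first and through $\tau$ second, so the composite reads them through $\sigma\tau$, which forces the matrices to multiply in the opposite order $M_\tau M_\sigma$ rather than $M_\sigma M_\tau$. Keeping the convention $(\sigma\tau)(i) = \sigma(\tau(i))$ consistent throughout is the one place where an easy slip can occur; apart from that, the statement is a purely formal consequence of the definition of the action.
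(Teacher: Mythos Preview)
Your proof is correct and follows essentially the same approach as the paper: both introduce the intermediate vector $\beta=\sigma(\alpha)$ and compute a generic coordinate to obtain $\alpha_{\sigma(\tau(i))}=\alpha_{(\sigma\tau)(i)}$. The paper leaves the matrix identity as an immediate restatement, whereas you spell it out, but the substance is identical.
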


\begin{proof}
By definition, $\sigma(\alpha) = (\alpha_{\sigma(1)},  \cdots , \alpha_{\sigma(n)})=(\beta_1,...,\beta_n)$ and 

$\tau (\sigma(\alpha)) = (\beta_{\tau(1)}, \cdots , \beta_{\tau(n)})$ with $\beta_i=\alpha_{\sigma(i)},$ so $\beta_{\tau(j)}=\alpha_{\sigma(\tau(j))}$.

Finally, $\tau (\sigma(\alpha)) =(\alpha_{\sigma(\tau(1))}, \cdots,\alpha_{\sigma(\tau(n))}) = \sigma \tau (\alpha)$. \end{proof}

\subsubsection{Strong reversibility of $\sigma$.}
Next lemma describes $\langle\sigma,\tau\rangle$-orbits and shows existence of distinguished elements in each $\langle\sigma,\tau\rangle$-orbit.

\begin{lemm} \label{cyc}
Let $f \in G_n$ be $T$-strongly reversible in $G_n$. Then
\begin{itemize}
\item $\sigma_f$ is $\sigma_T $-strongly reversible in $\mathcal S_n$.
\item If $i\in \{1,...,n\}$ belongs to a $\sigma_f$-cycle $S$ of  length $p$ then either
\begin{itemize}
\item $\sigma_T(i)$ belongs to $S$ and there exists $u$ in $S$ such that $\sigma_T(u)\in \{u, \sigma_f(u)\}$ or
\item $\sigma_T(i)$ generates a disjoint $\sigma_f$-cycle of  length $p$.
\end{itemize}
\end{itemize}
\end{lemm}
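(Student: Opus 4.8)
The plan is to push everything down to the level of permutations via the projection $\pi : G_n \to \mathcal S_n$, $g \mapsto \sigma_g$. Since $G_n \simeq (\mathbb S_n)^n \rtimes \mathcal S_n$ (Remark \ref{cons}), this $\pi$ is a surjective group homomorphism. For the first bullet, $T$ being an involution gives $T^2 = \mathrm{Id}$, hence $\sigma_T^2 = \sigma_{T^2} = \mathrm{Id}$, so $\sigma_T$ is an involution (possibly trivial) in $\mathcal S_n$; and applying $\pi$ to $TfT = f^{-1}$ yields $\sigma_T \sigma_f \sigma_T = \sigma_f^{-1}$. Thus $\sigma_f$ is $\sigma_T$-strongly reversible in $\mathcal S_n$. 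This step is immediate, and the content of the lemma lies in the second bullet.

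Write $\sigma = \sigma_f$ and $\tau = \sigma_T$, so that $\tau \sigma \tau = \sigma^{-1}$, equivalently $\sigma\tau = \tau\sigma^{-1}$. First I would show that $\tau$ permutes the supports of the $\sigma$-cycles, sending each to a $\sigma$-cycle of the same length. Fix a cycle $S = (a_0, a_1, \dots, a_{p-1})$ with $\sigma(a_j) = a_{j+1}$ (indices mod $p$). From $\sigma\tau = \tau\sigma^{-1}$ one computes $\sigma(\tau(a_j)) = \tau(\sigma^{-1}(a_j)) = \tau(a_{j-1})$, which shows that $\tau(S)$ is $\sigma$-invariant and that $\sigma$ restricted to it is a single $p$-cycle (traversed in the reverse direction); hence $\tau(S)$ is exactly one $\sigma$-cycle of length $p$. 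This already produces the dichotomy: either $\tau(S) \ne S$, in which case $\sigma_T(i) = \tau(i) \in \tau(S)$ generates a disjoint $\sigma$-cycle of length $p$ (the second alternative), or $\tau(S) = S$ and $\sigma_T(i) \in S$.

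It then remains to treat the $\tau$-invariant case $\tau(S) = S$ and exhibit $u \in S$ with $\tau(u) \in \{u, \sigma(u)\}$. The identity $\sigma(\tau(a_j)) = \tau(a_{j-1})$ forces $\tau$ to act on $S$ as an affine reflection: writing $\tau(a_j) = a_{\phi(j)}$ gives $\phi(j+1) = \phi(j) - 1 \pmod p$, hence $\phi(j) = c - j \pmod p$ with $c = \phi(0)$ (which is automatically an involution, consistent with $\tau^2 = \mathrm{Id}$). Then $\tau(a_k) = a_k$ means $2k \equiv c \pmod p$, while $\tau(a_k) = \sigma(a_k) = a_{k+1}$ means $2k \equiv c-1 \pmod p$. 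The delicate point is the elementary solvability analysis of these two congruences: if $p$ is odd then $2$ is invertible mod $p$ and each is already solvable; if $p$ is even then exactly one of $c$, $c-1$ is even, so precisely one of the two congruences is solvable. In every case at least one admits a solution $k$, yielding the required $u = a_k$. I expect this modular case distinction to be the main obstacle, though it is entirely elementary; the conceptual heart is the reduction $\sigma(\tau(a_j)) = \tau(a_{j-1})$ pinning down $\tau$ as a reflection of the cyclically ordered support of $S$. Note that the two alternatives obtained are exactly the two defining conditions of a distinguished element, which is why this lemma will yield distinguished representatives in each $\langle \sigma_f, \tau\rangle$-orbit.
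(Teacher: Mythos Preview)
Your proof is correct and follows essentially the same route as the paper: reduce to permutations via the projection $g\mapsto\sigma_g$, use $\sigma_T\sigma_f^k=\sigma_f^{-k}\sigma_T$ to see that $\tau$ maps each $\sigma$-cycle onto a $\sigma$-cycle of the same length, and in the $\tau$-invariant case locate the distinguished $u$. The only cosmetic difference is in this last step: where you solve the congruences $2k\equiv c$ or $2k\equiv c-1 \pmod p$ by a parity case split, the paper simply writes $\sigma_T(i)=\sigma_f^{\,s}(i)$ and takes $u=\sigma_f^{\lfloor s/2\rfloor}(i)$, so that $\sigma_T(u)=\sigma_f^{\,s-2\lfloor s/2\rfloor}(u)\in\{u,\sigma_f(u)\}$ in one line; this is exactly your computation with $c=s$.
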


\noindent{\bf Proof.}

The first item  is a direct consequence of the fact that the map $f \mapsto \sigma_f$ is a morphism.

For the second item,  it is obvious  that if $\sigma_T(i)\in S$ that is if there exists $s\in \mathbb N$ such that $\displaystyle \sigma_T(i)=\sigma_f^{\ s} (i)$ then  $i$ and  $\sigma_T (i)$ generate the same cycle of $\sigma_f$.

In addition, for any $k$ one has $\sigma_T (\sigma_f^{\ k}(i))= \sigma_{f}^{-k} ( \sigma_T (i))=
\sigma_{f}^{-k+s}(i)= \sigma_{f}^{-2k+s}(\sigma_{f}^{\ k}(i))$.

Taking $u= \sigma_{f^{k}}(i)$ with $k=[\frac{s}{2}]$ the integer part of $\frac{s}{2}$, it holds that $\sigma_T(u)\in \{u, \sigma_f(u)\}$.

\medskip

The remaining case occurs when for any $j$ in  $S$, $\sigma_T(j)$ is not in $S$. By the first item, for $0\leq k< p$ we have $\sigma_f^{\ k}  (\sigma_{T} (i))= \sigma_T \sigma_{f}^{-k} (i)$ and these $p$ points do not belong to $S$.

Moreover, $\sigma_f^{\ k} (\sigma_{T} (i))= \sigma_T \sigma_{f}^{-k} (i) \not=\sigma_T(i)$, for $k=1,..., p-1$ since $\sigma_{f}^{-k}(i)\not=i$,

\noindent and $\sigma_f ^{\ p} (\sigma_T(i))= \sigma_T\sigma_{f}^{-p}(i)= \sigma_{T }(i)$.

In conclusion, $\sigma_T(i)$ generate a $\sigma_f$-cycle  of  length $p$ consisting in the $\sigma_T (j)$, $j\in S$.  \hfill $\square$

\subsubsection{Necessary and sufficient conditions for strong reversibility.} \ 

\smallskip

Consequences of Definition \ref{Gn} and straightforward calculus give rise to:
\begin{propri} \label{Pro3} Let $f$, $g$ and $T$ in   $G_n$.
\begin{enumerate}
\item $\sigma_{f\circ g} =\sigma_{f}\circ \sigma_{g}$ and $ \alpha_{f\circ g} = \alpha_g +\sigma_g( \alpha_f) $.
\item $\sigma_{f^{-1}} =\sigma_{f}^{-1}$ and $ \alpha_{f^{-1}} = -(\sigma_f)^{-1}( \alpha_f) $.
\item $ T =(\alpha_T,\sigma_{T})$ is an involution if and only if  $\sigma_{T}^2=Id$ and $ \alpha_T= -\sigma_T (\alpha_T)$  that is $\alpha_{\sigma_{T}(i)}(T)=-\alpha_i(T)$, for $1\leq i \leq n$.
\item  $ T $ reverses $f$  if and only if $\sigma_T \sigma_f \sigma_T^{-1}= \sigma_f ^{-1}$ and  $$\alpha_{T^{-1}}+\sigma_{T^{-1}}(\alpha_f) +\sigma_{T^{-1}}(\sigma_f (\alpha_T))=-\sigma_{f}^{-1}(\alpha_f).$$
\end{enumerate}
\end{propri}

We will use these properties to prove the following

 \begin{lemm}\label{alpha}
Let $f=(\alpha_f, \sigma_f)$ and $T=(\alpha_T, \sigma_T)$ in $G_n$. Then $T$ is an involution  that reverses $f$ if and only if $\sigma_T$ is an involution that reverses $\sigma_f$  and

 \begin{enumerate}
 \item $\alpha_T \in Ker (I+M_{\sigma_T})$,
 \item $\alpha_T \in Ker (I+M_{\sigma_{T}\sigma_{f}}) +\alpha_f$.
 \end{enumerate}
 \end{lemm}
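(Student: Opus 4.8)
The plan is to read both ``involution'' and ``reverses'' through Properties \ref{Pro3} and then reduce everything to linear algebra over the torus $\mathbb{S}_n^n$, using the identity $M_{\sigma\tau}=M_\tau M_\sigma$ from the Claim and the fact that the maps $\alpha\mapsto\sigma(\alpha)$ are given by the permutation matrices $M_\sigma$. Throughout I would abbreviate $P=M_{\sigma_T}$ and $Q=M_{\sigma_f}$; these are invertible (permutation) matrices acting on $\mathbb{S}_n^n$, so left-multiplying an identity by either of them is an equivalence.

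First I would dispose of the involution part. By Property \ref{Pro3}(3), $T=(\alpha_T,\sigma_T)$ is an involution if and only if $\sigma_T^2=Id$ and $\alpha_T=-\sigma_T(\alpha_T)=-P\alpha_T$, and the latter is literally $(I+P)\alpha_T=0$, i.e. condition (1). Once $T$ is known to be an involution we have $T^{-1}=T$, so $\alpha_{T^{-1}}=\alpha_T$ and $\sigma_{T^{-1}}=\sigma_T$, and the $\alpha$-part of the reversibility criterion in Property \ref{Pro3}(4) collapses to
\[
(I+PQ)\,\alpha_T=-(P+Q^{-1})\,\alpha_f,\qquad(\star)
\]
where I used $\sigma_T(\sigma_f(\alpha_T))=PQ\,\alpha_T$ (apply $M_{\sigma_f}$ then $M_{\sigma_T}$), $\sigma_T(\alpha_f)=P\alpha_f$ and $\sigma_f^{-1}(\alpha_f)=Q^{-1}\alpha_f$. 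Note $I+PQ=I+M_{\sigma_f\sigma_T}$. At this point the statement is reduced to: assuming $\sigma_T^2=Id$, that $\sigma_T$ reverses $\sigma_f$, and (1), show that $(\star)$ is equivalent to condition (2), which reads $(I+QP)(\alpha_T-\alpha_f)=0$ since $M_{\sigma_T\sigma_f}=M_{\sigma_f}M_{\sigma_T}=QP$.

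The core of the proof is this equivalence, obtained by a short matrix computation. Since $\sigma_T$ reverses $\sigma_f$, applying $M$ to $\sigma_T\sigma_f\sigma_T=\sigma_f^{-1}$ gives $PQP=Q^{-1}$, hence (taking inverses and using $P^2=I$) the relation $PQ^{-1}=QP$. Now left-multiply $(\star)$ by the invertible matrix $P$: on the left $P(I+PQ)=P+Q$, and using (1) in the form $P\alpha_T=-\alpha_T$ this becomes $(Q-I)\alpha_T$; on the right $-P(P+Q^{-1})\alpha_f=-(I+PQ^{-1})\alpha_f=-(I+QP)\alpha_f$ by the relation above. Thus $(\star)$ is equivalent to $(Q-I)\alpha_T=-(I+QP)\alpha_f$, i.e. to $(I+QP)(\alpha_T-\alpha_f)=0$, which is exactly condition (2). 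Every step (left-multiplication by $P$, substitution via (1), rewriting via $PQ^{-1}=QP$) is reversible, so the equivalence holds in both directions.

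Finally I would assemble the two implications. Forward: if $T$ is an involution reversing $f$, then Property \ref{Pro3}(3) yields $\sigma_T^2=Id$ and (1), the morphism $f\mapsto\sigma_f$ (equivalently the first clause of Property \ref{Pro3}(4)) gives that $\sigma_T$ reverses $\sigma_f$, and Property \ref{Pro3}(4) together with the equivalence above gives (2). Conversely, if $\sigma_T$ is an involution reversing $\sigma_f$ and (1), (2) hold, then (1) with $\sigma_T^2=Id$ makes $T$ an involution by Property \ref{Pro3}(3), and then (2) is equivalent to $(\star)$, so Property \ref{Pro3}(4) shows $T$ reverses $f$. I expect the main obstacle to be purely bookkeeping: keeping the order of the matrix products straight (the two conditions genuinely involve $M_{\sigma_f\sigma_T}$ versus $M_{\sigma_T\sigma_f}$) and correctly exploiting the permutation relation $PQ^{-1}=QP$ to pass between them.
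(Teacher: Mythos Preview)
Your proof is correct and follows essentially the same route as the paper's: both read off conditions from Properties~\ref{Pro3}, rewrite the reversibility equation as $(I+PQ)\alpha_T=-(P+Q^{-1})\alpha_f$, left-multiply by $P=M_{\sigma_T}$, and then use $P\alpha_T=-\alpha_T$ together with the relation $PQ^{-1}=QP$ (coming from $\sigma_T$ reversing $\sigma_f$) to land on $(I+QP)(\alpha_T-\alpha_f)=0$. The only cosmetic difference is that the paper packages the left-hand side directly as $(I+M_{T\circ f})(-\alpha_T)$ rather than as $(Q-I)\alpha_T$, and handles the converse with a one-line remark that all manipulations are equivalences since $M_T$ is invertible; your version spells this reversibility out more explicitly.
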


\begin{proof} 

For simplicity, for $g\in G_n$ the matrix $M_{\sigma_g}$ is denoted by $M_g$. 

Suppose that $T$ is an involution  that reverses $f$. 

According to item (3) of Properties \ref{Pro3},  $\sigma_T$ is an involution that reverses $\sigma_f$ and  $\alpha_T+M_T(\alpha_T)=0$. 

It remains to prove item (2). As $T$ is an involution,  Properties \ref{Pro3} (4) leads to  $$\alpha_T+\sigma_{T}(\alpha_f) +\sigma_{T}(\sigma_f (\alpha_T))=-\sigma_{f}^{-1}(\alpha_f).$$

\medskip

 Using matrix notation, last formula implies that
$$(I + M_{T} M_f )(\alpha_T)= -(M_T +M_{f}^{-1})(\alpha_f).$$

\smallskip

Left multiplying both sides by $M_T$, we obtain
$$( M_{T} + M_f )(\alpha_T)= -( I + M_T M_{f}^{-1})(\alpha_f).$$

\smallskip

On one hand,  since $M_T(\alpha_T)= -\alpha_T$, we have that
$$( M_{T} + M_f )(\alpha_T)=(M_{T} + M_f) M_T M_T(\alpha_T)=(I + M_f M_T) (-\alpha_T)=(I + M_{T\circ f}) (-\alpha_T) .$$

\smallskip

On the other hand,  since $f^{-1}\circ T= T \circ f$, we get that
$$( I + M_T M_{f}^{-1})(\alpha_f)=( I + M_{f^{-1}\circ T})(\alpha_f)=
( I + M_{T\circ f})(\alpha_f).$$

\medskip

Combining these, we conclude that
$$(I + M_{T\circ f}) (\alpha_T) = ( I + M_{T\circ f})(\alpha_f).$$

\medskip

 Therefore  $\displaystyle \alpha_T -\alpha_f \in Ker (I +M_{T\circ f})$.

 \medskip
 Note that under the assumptions that  $\alpha_T \in Ker (I+M_{\sigma_T})$ and $\sigma_T$ reverses $\sigma_f$, all formulas below are equivalent since $M_T$ is invertible. \end{proof}

\begin{cons} Let $\tau \in \mathcal S_n$ be an involution, $f =(\alpha_f, \sigma_f)$  and $T=(\alpha_T, \tau)$ be two elements of $G_n$.

\begin{itemize}

\item The following properties (a), (b) and (c) are equivalent.

\begin{enumerate}[(a)]

\item $f$ is strongly reversible by $T$.

\medskip

\item \begin{enumerate}[$(1)$]
\item $\tau$ reverses $\sigma$, $\alpha_{\tau(j)}(T)= -\alpha_{j}(T)$ for $j\in\{1,\cdots,n\}$ and 
\item $\alpha_{\sigma(j)}(T)= \alpha_{j}(T)-\alpha_{j}(f)-\alpha_{\tau \sigma(j)}(f)$ for $j\in\{1,\cdots,n\}$.
\end{enumerate}

\medskip

\item \begin{enumerate}
\item[$(1)$]$\tau$ reverses $\sigma$,  $\alpha_{\tau(j)}(T)= -\alpha_{j}(T)$ for $j\in\{1,\cdots,n\}$ and
\item[$(2')$]$\alpha_{\tau\sigma(j)}(T)= -\alpha_{j}(T)+\alpha_{j}(f)+\alpha_{\tau \sigma(j)}(f)$  for $j\in\{1,\cdots,n\}$.
\end{enumerate}
\end{enumerate}

\medskip

\item If $f$ is strongly reversible by $T$ then for any $j\in\{1,\cdots,n\}$,

$\displaystyle (3)$  $2\alpha_{j}(T)=0$  if $\tau(j)=j$ or $2\alpha_{j}(T)= 2\alpha_{j}(f)$ if $\tau(j)=\sigma(j)$ and

$\displaystyle (4) \ \alpha_{\sigma^k(j)}(T)= \alpha_{j}(T) - \sum_{p=0}^{k-1} \alpha_{\sigma^p(j)} (f) -  \sum_{p=1}^{k} \alpha_{\tau (\sigma^p(j))} (f)$ for any $k\in \mathbb N$.
\end{itemize}
\end{cons}

\begin{proof} \ 

\begin{itemize}
\item  The equivalence between Items (a) and (c) is exactly the statement of Lemma 3.2.  Under Condition (1), Conditions (2)  and (2') are equivalent since $\alpha_{\tau\sigma(j)}(T)= -\alpha_{\sigma(j)}(T)$. So Items (b) and (c) are also equivalent.

\item Condition (3) is a direct consequence of  (1) and (2').

\noindent Condition (4) is proved by induction on  $k$.

Indeed, for $k=0$ we get $\alpha_{j}(T)= \alpha_{j}(T)$ (under the convention that $\sum_a ^b=0$  if $b<a$) and for $k=1$, Condition (4) is  Condition (2). We suppose that $(4)$ holds for a given arbitrary positive integer $k$, using Condition (2) one has:
$$\alpha_{\sigma^{k+1} (j)}(T) = \alpha_{\sigma\sigma^k(j)}(T)= 
\alpha_{\sigma^k(j)}(T)-\alpha_{\sigma^k(j)}(f)-\alpha_{\tau \sigma^{k+1}(j)}(f).$$
Therefore, by induction hypothesis $$\alpha_{\sigma^{k+1} (j)}(T) =\alpha_{j}(T) - \sum_{p=0}^{k-1} \alpha_{\sigma^p(j)} (f) -  \sum_{p=1}^{k} \alpha_{\tau (\sigma^p(j))} (f)-\alpha_{\sigma^k(j)}(f)-\alpha_{\tau \sigma^{k+1}(j)}(f)$$ which leads to the required formula. 
\end{itemize}
\end{proof}

\subsection{Proof of Theorem  \ref{Th1}.}

Let $u\in D$ a set of distinguished representatives of  the $\langle\sigma_f, \sigma_T\rangle$-orbits. Let $S_u$ be the $\sigma_f$-cycle by $u$ and  $m$ be its length. We recall that  $u$ is   distinguished if either $\sigma_T(u) \notin S_u$ or $\sigma_T(u)\in \{u,\sigma_f(u)\}$. 
 
For clarity, we will denote $\alpha_{i}(f)=\alpha(i), \  \ \   \sigma_f= \sigma,\ \  \alpha_{i}(T)=\beta(i)$  \ \ and  \ $ J=\displaystyle \bigcup_{\delta \in <\sigma, \tau>} I_{\delta(u)}$,  
 \vskip -0.4truecm 
 \noindent noting that $J$ is an $f$-invariant set.
 
\medskip

{\bf (1)} We first prove that if $f$ is strongly reversible by some $T$ with $\sigma_T=\tau$ then $$\sum_{j\in S_u} \alpha(j)+\sum_{j\in \tau S_u} \alpha(j) =0.$$

\smallskip

Summing for $j\in S_u$ the equalities (2) of Consequences of Lemma 3.2, we get that $$ \sum_{j\in S_u} \beta(\sigma(j)) = \sum_{j\in S_u} \beta(j) - \sum_{j\in S_u} \alpha(j)- \sum_{j\in S_u} \alpha(\tau\sigma(j))$$ and since  $\displaystyle \sum_{j\in S_u} \beta(\sigma(j)) = \sum_{j\in S_u} \beta(j)$, we get  $$ \sum_{j\in S_u} \alpha( j)+ \sum_{j\in S_u} \alpha(\tau\sigma(j))=0.$$

\bigskip

{\bf (2)} We now prove that under the previous condition, the choice of $\alpha_u(T)$ satisfying an  eventual  additional condition determines a unique involution $T$ that reverses $f$ on $J$.

\medskip

{\bf CASE A : $S_u$ is $\tau$-invariant.}

\medskip

We first note that $\beta(u)= -\beta(u)$ if $\tau(u)=u$  and
$2\beta (u)= 2\alpha (u)$ if $\tau(u)=\sigma(u)$ are necessary conditions for reversibility by (3) of Consequences of Lemma 3.2.

\smallskip

We fix $\beta(u)$ such that $2\beta (u)= 0$  if $\tau(u)=u$ and such that $2\beta (u)= 2\alpha (u)$ if $\tau(u)=\sigma(u)$. We consider the map  $T$ defined by   $\sigma_T=\tau$, $\alpha_{u}(T)=\beta(u)$ and Conditions $(4_k)$ that is  $\displaystyle  \alpha_{u^k}(T)=\beta(u^k) = \beta(u) - \sum_{j=0}^{k-1} \alpha (u^j) -  \sum_{j=1}^{k} \alpha(\tau(u^j))$ for $0\leq k<m$, where $u^k =\sigma^k(u)$.

The map $T$ is therefore well defined as a bijection of $J$.

Supposing that  the condition $(\dagger \dagger)$ \  $\displaystyle  2\sum_{j\in S_u} \alpha(j) =0$ \ holds, we will prove that  $T$  is an involution that reverses the restriction to $J$ of $f$, by checking that $T$ satisfies Conditions (1) and (2) of  Consequences of Lemma 3.2.

\bigskip

$\bullet$ For (2), we have to check that  $\beta(u^{k+1})= \beta(u^{k})-\alpha (u^{k})-\alpha(\tau(u^{k+1}))$, for $0\leq k<m$.

By definition, $\displaystyle \beta(u^{k+1})=\beta({u}) - \sum_{j=0}^{k} \alpha(u^j) -  \sum_{j=1}^{k+1} \alpha(\tau (u^j))$ therefore 

$\displaystyle \beta(u^{k+1})=\left(\beta(u) - \sum_{j=0}^{k-1} \alpha(u^j) -  \sum_{j=1}^{k} \alpha(\tau (u^j))\right) -\alpha(u^k)- \alpha (\tau (u^{k+1})).$

The condition $(\dagger \dagger)$ is used for treating the case $k=m-1$.
\bigskip

$\bullet$ For (1), proof depends on the nature of $u$.

\medskip

Note that for all $j\in \{ 1,...,m \}$ it holds that $\tau (u^j)= u^{m-j}$ if $\tau(u)=u$ or
$\tau (u^j)= u^{m+1-j}$, if $\tau(u)=\sigma(u)$.

\bigskip

\ \  Suppose $\tau(u)=u$,  checking (1) is proving that $\displaystyle  \beta(u^{m-k})= -\beta({u^k})$.

\medskip

$\displaystyle  \beta(u^{m-k})=\beta(u) - \sum_{j=0}^{m-k-1} \alpha(u^j) -  \sum_{j=1}^{m-k} \alpha(\tau (u^j))$,

adding $0= \displaystyle \sum_{j=0}^{m-1} \alpha (u^j) +  \sum_{j=1}^{m} \alpha (\tau (u^j))$, one get:

$\displaystyle  \beta(u^{m-k})=\beta(u) + \sum_{j=m-k}^{m-1} \alpha(u^j) + \sum_{j=m-k+1}^{m} \alpha(\tau (u^j)))$,

\medskip

changing $j$ for $p=m-j$, and noting that $\beta(u)=-\beta(u)$, we have:

$\displaystyle  \beta(u^{m-k})=\beta(u) + \sum_{p=1}^{k} \alpha(u^{m-j}) + \sum_{p=0}^{k-1} \alpha(\tau (u^{m-j}))=-\beta(u) + \sum_{p=1}^{k} \alpha(\tau(u^{j})) + \sum_{p=0}^{k-1} \alpha(u^j)$.

\bigskip

\ \  Suppose that $\tau(u)=\sigma(u)$, checking (1) is proving that $\displaystyle  \beta( u^{m+1-k})= -\beta(u^k)$.

\medskip

$\displaystyle  \beta(u^{m+1-k})=\beta(u) - \sum_{j=0}^{m-k} \alpha(u^j) -  \sum_{j=1}^{m+1-k} \alpha(\tau (u^j))$.

Adding $0= \displaystyle \sum_{j=0}^{m-1} \alpha(u^j) +  \sum_{j=1}^{m} \alpha(\tau (u^j))$, one get

$\displaystyle  \beta(u^{m+1-k})=\beta(u) + \sum_{j=m+1-k}^{m-1} \alpha(u^j)  + \sum_{j=m-k+2}^{m} \alpha(\tau (u^j))$.

Changing $j$ for $p=m+1-j$,

$\displaystyle  \beta(u^{m+1-k})=\beta(u)+ \sum_{p=2}^{k} \alpha(u^{m+1-j}) + \sum_{p=1}^{k-1} \alpha(\tau (u^{m+1-j} ))$,

As $\tau (u^j)= u^{m+1-j}$ and  $\beta(u)=-\beta(u)+\alpha(u)+\alpha(\tau\sigma(u))$, we have

$ \displaystyle
\beta(u^{m+1-k})=-\beta(u)+\alpha(u)+\alpha(\tau(u^1)) + \sum_{p=2}^{k} \alpha(\tau(u^j)) + \sum_{p=1}^{k-1} \alpha(u^j )=- \beta(u^{k})$.

\bigskip

\bigskip

{\bf CASE B : $S_u$ and $\tau S_u$ are disjoint cycles.}

\medskip

We fix $\beta(u)$ arbitrary and we define a map $T$ verifying  $\sigma_T=\tau$, $\alpha_{u}(T)=\beta(u)$ and Conditions $\displaystyle (4_k) \ \  \beta(u^{k})=\beta({u}) - \sum_{j=0}^{k-1} \alpha(u^j) -  \sum_{j=1}^{k} \alpha(\tau (u^j))$  and   $(1_k) \ \ \beta(\tau(u^k))= -\beta(u^k)$  for $0\leq k<m$.

Supposing that  $2\sum_{j\in S_u} \alpha(j) =0$, we check that $T$ defined   by these conditions
 is an involution that reverses the restriction to $J$ of $f$, that is, $T$ satisfies conditions (1) and (2) of  Consequences of Lemma 3.2.

\medskip

$\bullet$  $(1)$  is  condition $(1_k)$.  
\medskip

$\bullet$ For $(2)$, we have to check that  $\beta(u^{k+1})= \beta(u^{k})-\alpha (u^{k})-\alpha(\tau(u^{k+1}))$. As in case A, we get: 

\noindent $\displaystyle \beta(u^{k+1})=\beta({u}) - \sum_{j=0}^{k} \alpha(u^j) -  \sum_{j=1}^{k+1} \alpha(\tau (u^j))=\beta(u) - \sum_{j=0}^{k-1} \alpha(u^j) -  \sum_{j=1}^{k} \alpha(\tau (u^j)) -\alpha(u^k)- \alpha (\tau (u^{k+1})).$



\section{Proof of Corollaries of Theorem \ref{Th1}}

\subsection{Proof of Corollary \ref{FO}}

\begin{enumerate}

\item Suppose  that $f$ is a strongly reversible element  with $\sigma_f$  a $n$-cycle (a cycle of length $n$).

By Properties \ref{Pro3} (1), $\displaystyle \alpha(f^p)= \sum_{j=0}^{p-1} \sigma_f^j(\alpha(f))$. Therefore  

$\displaystyle  2\alpha_i(f^n)= 2\sum_{j=0}^{n-1} \alpha_{i^j}(f)=A(f)=0$  for all $i\in \{1,...,n\}$ by Theorem \ref{Th1}.

Finally,  $\sigma_{f^{n}} =Id$  and  $2\alpha_i(f^{n})=0$ for all $i\in \{1,...,n\}$. Hence,   $f$  has finite order at most $2n$.

\medskip

This extends to the case where  $\sigma_f$ is a product of cycles with distinct lengths, noting that the restriction of $f$ to any cycle of $\sigma_f$ must be "separately" strongly reversible.

\medskip

The map $f=(\alpha_1, Id)\in G_1$ with $\alpha_1=\frac{p}{q}$ with $q>2$ has finite order but  is not strongly reversible in $G_1$.
 
\medskip

\item In $G_2$, the map $f=((\alpha_1, \alpha_2), (1,2))$ with $\alpha_2=-\alpha_1  \notin \mathbb Q$ has infinite order and is  strongly reversible.

\noindent Minimal strongly reversible elements do not exist since for a minimal element $f$, $\sigma_f $ is a $n$-cycle.
\end{enumerate}

\subsection{Composites of involutions in $G_n$. Proof of Corollary \ref{PI}}\ 

\textit{Items (1) and (2).} As involutions satisfy $A(f)=0$ and $A$ is a morphism, it holds that $I_n\subset Ker A$. 

\smallskip

Conversely, let $f\in G_n$ such that  $A(f)=2\sum \alpha_i(f)=0$. 

Let $\gamma$ be a $n$-cycle, $\tau = \gamma \sigma_{f}^{-1}$ and  $T$ be  the element of $G_n$ defined by  $\sigma_T=\tau$ and $\alpha_T =0$.  

Therefore $\sigma_{Tf}= \tau\sigma_{f}=  \gamma$ is a $n$-cycle and $A(Tf)=0$ since  $A(f)=A(T)=0$ and $A$ is a morphism. Thus by Remark \ref{revcyc} and Theorem \ref{Th1}, $Tf$ and $T$ are strongly reversible so both are product of 2 involutions. 
 
Consequently $f$ is a product of at most $4$ involutions. In particular, $Ker (A) \subseteq I_n$.

\smallskip

In addition, according to Corollary \ref{FO}, $Tf$ is also periodic and $f$  is a product of at most $2$ periodic IETs.

\smallskip
\textit{Item (3)- General case :  for $n\geq 5$, there exists $f\in G_n$ which can not be written as a product of $3$ involutions.}
\begin{defi} The \textbf{rank} of $f\in G_n$  is the rank of the subgroup of $\mathbb S_n$ generated by the $\alpha_i(f)$'s and $\frac{1}{2n}$.
\end{defi}

Let $ f$ be the element of $G_n$ defined by $\sigma_f=Id$ and $\alpha_f= ( \gamma_1, \cdots, \gamma_{n-1},\delta)$, where the $\gamma_i$'s are rationally independent irrational numbers and $\delta = -(\gamma_1 + \cdots +\gamma_{n-1})$. 

Then $A(f)=0$, $f$ is not strongly reversible and it has rank $n$, we claim that $f$ can not be written as a product of $3$ involutions.

We argue by contradiction supposing that $f= T r$ where $T$ is an involution and $r$ is strongly reversible by $l=(\tau, \alpha_l) \in G_n$ . In particular, one has $\sigma_r=\sigma_{T}$ is an involution and we decompose it as a product of 1 and 2 cycles of disjoint supports: 
$$\sigma_r= \prod_{i=1}^p (a_i,b_i)  \ \prod_{i=p+1}^{p+s } (a_i,b_i)  \ \prod_{i=p+s+1}^{p+s+t } (a_i) \ \prod_{i=p+s+t+1}^{p+s+t+v } (a_i) ,$$
where  $\{a_i, i=1,\cdots p+s+t+v\} \cup\{b_i, i=1, \cdots, p+s \} = \{1,...,n\}$, in particular $2p+2s+t+v= n$ and
\begin{enumerate}
\item $\tau(a_i,b_i)\not=(a_i,b_i)$ (it is another  2-cycle disjoint from $(a_i,b_i)$),  for $i=1,\cdots p$,
\item $\tau(a_i,b_i)=(a_i,b_i)$ for $i=p+1,\cdots p+s$,
\item $\tau(a_i)\not=a_i$ (it is another 1-cycle disjoint from $(a_i)$),  for $i=p+s+1,\cdots p+s+t$,
\item $\tau(a_i)=a_i$ for $i=p+s+t+1,\cdots p+s+t+v$.
\end{enumerate}

For clarity, for $g\in G_n$, we denote $\alpha_{a_i} (g) = \alpha_i(g)$ and $\alpha_{b_i} (g) =\beta_i(g)$. 

\medskip

According to Theorem 1, the reversibility of $r$ leads to the following equations:
\begin{enumerate}
\item $\alpha_i(r)+\beta_i(r)+\alpha_{\tau(i)}(r)+\beta_{\tau(i)}(r)=0$, for $i=1,\cdots p$,
\item $2(\alpha_i(r)+\beta_i(r))=0$ for $i=p+1,\cdots p+s$,
\item $\alpha_i(r)+\alpha_{\tau(i)}(r)=0$,  for $i=p+s+1,\cdots p+s+t$,
\item $2\alpha_i(r)=0$ for $i=p+s+t+1,\cdots p+s+t+v$.
\end{enumerate}

We get then $\frac{p}{2}+ s +\frac{t}{2}+ v$ independent equations.

\smallskip

In addition, since $T$ is an involution, $\alpha_T$ satisfies:

\begin{enumerate}
\item $\alpha_i(T)+\beta_{i}(T)=0$, for $i=1,\cdots ,p+s$,
\item $2\alpha_i(T)=0$ for $i\geq p+s+1$.
\end{enumerate}

\medskip

By properties \ref{Pro3}, $\alpha_{Tr} =\alpha_r + \sigma_r (\alpha_T) =\alpha_r + \sigma_T(\alpha_T)=\alpha_r -\alpha_T$.


It is easy to check that $\alpha_i(Tr)$ satisfies the equations (1), (2) and (4) of  $\alpha_i(r)$.

For equations (3) we get $\alpha_i(Tr)+\alpha_{\tau(i)}(Tr)=\alpha_i(r) -\alpha_i(T)+\alpha_{\tau(i)}(r) -\alpha_{\tau(i)}(T)=\alpha_i(r)+\alpha_{\tau(i)}(r) -(\alpha_i(T)+\alpha_{\tau(i)}(T))= -(\alpha_i(T)+\alpha_{\tau(i)}(T))$ by condition (3) for $r$. But  condition (2) for $\alpha_i(T)$ states that 
$2\alpha_i(T)=2\alpha_{\tau(i)}(T)=0$, so $2 (\alpha_i(T)+\alpha_{\tau(i)}(T))=0$. 

\smallskip

So $\alpha_i(Tr)$ satisfies  $\frac{p}{2}+ s +\frac{t}{2}+ v$ equations.

Therefore the rank of $Tr$ is at most $n+1- (\frac{p}{2}+ s +\frac{t}{2}+ v)\leq n+1-\frac{n}{4}$, the maximum being  for $2p=n, s=t=v=0$. This contradicts that $Tr$ has rank $n$, provided that $n\geq 5$.

\medskip

\textit{Item (3)- Remaining cases.}

In $G_1$, the subgroup $I_1=\{ Id, R_{\frac{1}{2}} \}$. In $G_2$, the subgroup $I_2$ consists in reversible maps. 

In $G_3$, the previous argument works since $p=0$ and therefore the maximal rank for a product of $3$ involutions is $2$.

In $G_4$, the maximal rank is obtained for $p=2, s=t=v=0$ and it is $4$. Let $f=Tr$, with $\sigma_T=\sigma_r = (a_1,b_1)(a_2,b_2)$ and $r$ is reversible by an involution that exchanges its 2-cycles. It holds that $\frac{1}{2} A(r) =\alpha_1(r)+\beta_1(r)+\alpha_{2}(r)+\beta_{2}(r)=0$ (condition (1) for $r$) and $\frac{1}{2} A(T) =(\alpha_1(T)+\beta_1(T))+ (\alpha_{2}(T)+\beta_{2}(T))=0$ (condition (1) for $T$). Therefore $\frac{1}{2} A(Tr) =\frac{1}{2} A(r) + \frac{1}{2} A(T) =0$.

Let $f\in G_4$. By Item (2), $f\in I_4$ if and only if $A(f) =0 \ (mod \frac{1}{4})$ that is $\frac{1}{2} A(f) \in\{0,\frac{1}{8}\} \  (mod \frac{1}{4})$. Then, any element $f$ of $I_4$ such that $\frac{1}{2} A(Tr) =\frac{1}{8}$ can not be written as a product of $3$ involutions.


\section{Reversibility in $G_n$. Proof of Theorem  \ref{Th2}.}

Let $f=(\alpha_f, \sigma_f) \in G_n$ be reversible by $T=(\alpha_T, \sigma_T) \in G_n$. We denote by $m$ the order of $\sigma_T$.

W.l.o.g. we can suppose that the action of  $\langle\sigma_T,\sigma_f\rangle$ on $\{1,\cdots, n\}$ is transitive.

\begin{lemm}\label{case}
Let $f\in G_n$ reversible by $T$ in $G_n$. Then $\sigma_f$ is $\sigma_T$-reversible in $\mathcal S_n$ and  if $i\in \{1,...,n\}$ belongs to a $\sigma_f$-cycle  of  length $p$ then 
$\sigma_T^s(i)$ generates a  $\sigma_f$-cycle of length $p$ and either
\begin{enumerate} 
\item $m$ is odd or 
\item $m$ is even and either 
\begin{enumerate}
\item there exists $s$ odd such that $\sigma_T^s(i) \in \langle\sigma_f\rangle(i)$
or
\item The map given for $j \in  \langle\sigma_f, \sigma_T\rangle(i)$  by 

\ \ $\bullet $ $ \epsilon(j)= 1$ \  \ if exists $s$ even such that $\sigma^s_T(j) \in \langle\sigma_f\rangle(i)$ 

\ \  $\bullet $ $ \epsilon(j)= -1$ if exists $s$ odd such that $\sigma^s_T(j) \in \langle\sigma_f\rangle(i)$,

\noindent is well defined.
\end{enumerate}
\end{enumerate}
\end{lemm}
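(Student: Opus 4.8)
The plan is to dispatch the two preliminary assertions quickly, and then to isolate a single combinatorial invariant $d$ that governs the whole trichotomy.

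For the reversibility of $\sigma_f$: since $g\mapsto\sigma_g$ is a morphism (Properties \ref{Pro3}(1)), applying it to $TfT^{-1}=f^{-1}$ yields $\sigma_T\sigma_f\sigma_T^{-1}=\sigma_f^{-1}$, exactly as in the first item of Lemma \ref{cyc}. For the cycle-length claim, I would use that $\sigma_T\sigma_f^{\,k}\sigma_T^{-1}=\sigma_f^{-k}$, so $\sigma_T$ carries the $\sigma_f$-cycle $\langle\sigma_f\rangle(i)$ bijectively onto $\langle\sigma_f\rangle(\sigma_T(i))$, a $\sigma_f$-cycle of the same length $p$; iterating this gives that $\sigma_T^s(i)$ lies in a $\sigma_f$-cycle of length $p$ for every $s$.

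The heart of the argument is the observation that $\langle\sigma_f\rangle$ is normal in $\langle\sigma_f,\sigma_T\rangle$, precisely because $\sigma_T\sigma_f\sigma_T^{-1}=\sigma_f^{-1}\in\langle\sigma_f\rangle$. Consequently $\sigma_T$ permutes the $\sigma_f$-cycles, and the induced action on the set of cycles is cyclic, generated by the class of $\sigma_T$; under the transitivity hypothesis on $\langle\sigma_f,\sigma_T\rangle$ this action is a single cycle. Letting $d$ denote the number of $\sigma_f$-cycles, each cycle then has $\sigma_T$-period exactly $d$, and $d\mid m$. I would record the key reformulation: for every $j$ in the orbit, $\sigma_T^s(j)\in\langle\sigma_f\rangle(i)$ holds if and only if $\sigma_T^s$ sends the cycle of $j$ onto $\langle\sigma_f\rangle(i)$, so that the set of admissible exponents $\{\,s : \sigma_T^s(j)\in\langle\sigma_f\rangle(i)\,\}$ is a nonempty coset of $d\mathbb{Z}$.

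The trichotomy then falls out by inspecting the parity of $d$. If $m$ is odd we are in case (1). If $m$ is even I split on $d$: when $d$ is odd, the exponent $s=d$ is an odd multiple of $d$ with $\sigma_T^d(i)\in\langle\sigma_f\rangle(i)$, which is case (2a); when $d$ is even, every element of a coset of $d\mathbb{Z}$ has the same parity, so the parity of an admissible $s$ depends only on $j$, the two defining clauses for $\epsilon$ are mutually exclusive and jointly exhaustive, and $\epsilon$ is well defined, which is case (2b). The main obstacle is not computational but structural: correctly establishing that $\sigma_T$ acts as a single $d$-cycle on the set of $\sigma_f$-cycles (the normality-plus-transitivity step) and translating the membership condition into the clean coset statement, after which the parity dichotomy for $d$ delivers cases (2a) and (2b) at once and is consistent with the involution case $m=2$ already treated in Lemma \ref{cyc}.
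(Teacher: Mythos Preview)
Your proposal is correct and, in substance, matches the paper's argument: the preliminary assertions are handled as in Lemma~\ref{cyc}, and the core step---if $m$ is even and no odd $s$ sends $i$ into $\langle\sigma_f\rangle(i)$ then $\epsilon$ is well defined---is the same. The paper carries this out by a direct contradiction (two exponents $s,s'$ of opposite parity sending $j$ into $\langle\sigma_f\rangle(i)$ force $\sigma_T^{s-s'}(i)\in\langle\sigma_f\rangle(i)$ with $s-s'$ odd), whereas you make the underlying structure explicit by introducing the number $d$ of $\sigma_f$-cycles, recognising the admissible exponent set as a coset of $d\mathbb Z$, and splitting on the parity of $d$. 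Your formulation is a touch more conceptual and also explains \emph{why} (2a) and (2b) are genuinely exhaustive (odd $d$ versus even $d$), at the cost of one extra paragraph setting up the normality/quotient picture; the paper's version is shorter but leaves that dichotomy implicit. One minor remark: the fact $d\mid m$ that you record is true but never used in your argument, and the ``transitivity hypothesis'' you invoke is simply the restriction to the orbit $\langle\sigma_f,\sigma_T\rangle(i)$, which is automatic here.
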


\begin{proof}
By arguments in proof of Lemma \ref{cyc}, $\sigma_T^s(i)$ generates a  $\sigma_f$-cycle of  length $p$ and the only point to prove is (2).

Suppose that $m$ is even and there doesn't exist $s$ odd such that $\sigma_T^s(i) \in \langle\sigma_f\rangle(i)$.

First, we note that  by transitivity hypothesis and reversibility relation 
  $\sigma_f \sigma_T=\sigma_T \sigma_f ^{-1}$, given $j $, there exist $ s, t$ such that $j= \sigma_T^s \sigma_f^t(i)$.

The reason why $\epsilon(j)$ fails to be well defined is that there exist $ s, s'$ with $s-s'$ odd such that $\sigma_T^s \sigma_f^t(i) =\sigma_T^{s'} \sigma_f^{t'}(i) $ that is  $\sigma_T^{s-s'} \sigma_f^t(i) = \sigma_f^{t'}(i) $ and then 
$\sigma_T^{s-s'} (i)= \sigma_f^{t+t'}(i) $ which is a contradiction.\end{proof}

\medskip

\noindent{\bf Proof of Theorem \ref{Th2}  when $m$, the order of  $\sigma_T$, is odd} is given by the following

\begin{prop} \label{CP}
Let $f\in G_n$ reversible by $T$ and suppose that there exists $s$ odd such that $\sigma_T ^{s} \in \langle\sigma_f\rangle$ (this is weaker than requiring $\sigma_T$ has odd order) then
 
 \begin{enumerate}
 \item  $\sigma_f$ is an involution,
 \item  $f$ is strongly reversible and 
 \item $f$ has finite order $2$ or $4$.
 \end{enumerate}
 \end{prop}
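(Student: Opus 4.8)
We have $f$ reversible by $T$, with $\sigma_T^s \in \langle \sigma_f \rangle$ for some odd $s$. We want three conclusions: $\sigma_f$ is an involution, $f$ is strongly reversible, and $f$ has order $2$ or $4$. Let me think about how to prove each.

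First, the symmetric-group fact. We know $\sigma_T \sigma_f \sigma_T^{-1} = \sigma_f^{-1}$ (reversibility descends to permutations). If $\sigma_T^s = \sigma_f^r$ for some odd $s$ and some $r$, then conjugating $\sigma_f$ by $\sigma_T^s$ gives $\sigma_f^{(-1)^s} = \sigma_f^{-1}$ (since $s$ is odd). But conjugating by $\sigma_T^s = \sigma_f^r$ gives $\sigma_f$ back (powers of $\sigma_f$ commute with $\sigma_f$). So $\sigma_f = \sigma_f^{-1}$, i.e. $\sigma_f^2 = \mathrm{Id}$. That's conclusion (1): $\sigma_f$ is an involution.

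Now with $\sigma_f$ an involution, $\sigma_T$ reverses it, and in fact $\sigma_T^s \in \langle \sigma_f\rangle = \{\mathrm{Id}, \sigma_f\}$. Since $\sigma_T$ reverses $\sigma_f$ and $\sigma_f = \sigma_f^{-1}$, any element reverses $\sigma_f$ trivially — reversing an involution is automatic. So the constraint is really just $\sigma_T^s \in \{\mathrm{Id}, \sigma_f\}$ with $s$ odd.

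The goal is to produce an *involution* reversing $f$. We have $T$ reversing $f$. The natural idea: build an involution from $T$ and $f$ using the properties listed in section 2.2: "$f$ is $h$-reversible then $f$ is $(hf^s)$-reversible for all $s$" and "$h^{2p+1}$ reverses $f$." So odd powers of $T$ reverse $f$, and we can multiply by powers of $f$. The candidate reversing involution should be some $T f^k$ or $T^{\text{odd}} f^k$ that squares to identity.

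Let me think. Since $\sigma_f^2 = \mathrm{Id}$, by Theorem 3.1 / its corollaries we'd want to check the condition ($\dagger\dagger$) holds, which would give strong reversibility directly. Actually maybe the cleaner route: since $f$ is reversible by $T$ and $\sigma_f$ is now an involution, I'd try to apply Theorem 3.1's characterization — verify that condition ($\dagger\dagger$) is forced by the existence of $T$. By summing the reversibility relations (Properties 3.3(4)) over the cycles, the same computation as in part (1) of the proof of Theorem 3.1 should show $\sum_{j \in S_u}\alpha(j) + \sum_{j\in\tau S_u}\alpha(j) = 0$, where now $\tau = \sigma_T$. This is conclusion (2): $f$ is strongly reversible.

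**My plan.** I would proceed:

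\textbf{Step 1 (conclusion 1).} From $\sigma_T \sigma_f \sigma_T^{-1} = \sigma_f^{-1}$, iterate to get $\sigma_T^s \sigma_f \sigma_T^{-s} = \sigma_f^{(-1)^s} = \sigma_f^{-1}$ for odd $s$. Since $\sigma_T^s \in \langle\sigma_f\rangle$ commutes with $\sigma_f$, the left side equals $\sigma_f$. Hence $\sigma_f = \sigma_f^{-1}$, so $\sigma_f^2 = \mathrm{Id}$.

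\textbf{Step 2 (conclusion 2).} Now $\sigma_f$ is an involution and $\sigma_T$ reverses it (automatic). Take $\tau := \sigma_T$. Run the summation argument from part (1) of the proof of Theorem 3.1, applied to $T$ reversing $f$ (not necessarily an involution $T$): summing Properties 3.3(4) coordinate-wise over each cycle $S_u$ should yield $\sum_{j\in S_u}\alpha_j(f) + \sum_{j\in\tau S_u}\alpha_j(f) = 0$, i.e. condition ($\dagger\dagger$). By Theorem 3.1 this produces an involution $T'=(\alpha_{T'},\tau)$ reversing $f$; hence $f$ is strongly reversible.

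\textbf{Step 3 (conclusion 3).} With $\sigma_f^2 = \mathrm{Id}$, compute $f^2$: its permutation is $\sigma_f^2 = \mathrm{Id}$ and its $\alpha$ is $\alpha_f + \sigma_f(\alpha_f)$ by Properties 3.3(1). Then condition (3) of Consequences of Lemma 3.2 ($2\alpha_j(T)=0$ or $2\alpha_j(T)=2\alpha_j(f)$), combined with the structure of $S_u$ (fixed points vs. transpositions of $\sigma_f$), forces $4\alpha_j(f)=0$ componentwise, giving $f^4 = \mathrm{Id}$. The order is then a divisor of $4$; since $\sigma_f^2 = \mathrm{Id}$ but possibly $\sigma_f \neq \mathrm{Id}$, the order is $2$ or $4$.

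**The main obstacle.** The delicate point is Step 3: carefully deducing $4\alpha_j(f) = 0$ (order dividing $4$) rather than just $2\alpha_j(f)=0$, and correctly distinguishing when the order is exactly $2$ versus exactly $4$. This requires bookkeeping across the two cases of $\sigma_f$-cycles (fixed points under $\sigma_f$ versus $2$-cycles, and their behavior under $\tau = \sigma_T$), and using condition (3) which gives $2\alpha_u(T)$ in terms of $0$ or $2\alpha_u(f)$. The cleanest route is probably to use strong reversibility from Step 2 together with an argument like part (1) of Corollary 4.1: since $\sigma_f^2 = \mathrm{Id}$, restricting to each cycle reduces to $G_1$ or $G_2$, where strongly reversible elements have order dividing $4$.
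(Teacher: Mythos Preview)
Your Step 1 is fine and matches the paper. The gap is in Step 2. You set $\tau := \sigma_T$ and plan to apply Theorem \ref{Th1}, but Theorem \ref{Th1} requires $\tau$ to be an \emph{involution}, and nothing in the hypotheses forces $\sigma_T^2 = \mathrm{Id}$ (for instance, $\sigma_T$ could have order $3$, with $s=3$ and $\sigma_T^3=\mathrm{Id}\in\langle\sigma_f\rangle$). So $(\alpha_{T'},\sigma_T)$ would not be an involution even if you could verify $(\dagger\dagger)$. Relatedly, the ``summation argument from part (1) of the proof of Theorem \ref{Th1}'' sums the per-coordinate identities (2) of the Consequences of Lemma \ref{alpha}, and those identities are derived \emph{assuming} $T$ is an involution (they use $\alpha_{T^{-1}}=-\alpha_T$ and $\sigma_T^2=\mathrm{Id}$). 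You cannot invoke them for a general reversing $T$.

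The paper's fix is a one-line reduction you are missing: since $\sigma_T^s=\sigma_f^t$ with $s$ odd, replace $T$ by $T_1:=T^s f^{-t}$, which still reverses $f$ (Properties \ref{Pro2}) and has $\sigma_{T_1}=\mathrm{Id}$. With $\sigma_{T_1}=\mathrm{Id}$ one manipulates Properties \ref{Pro3}(4) directly (compose by $\sigma_f$, add, and use $\alpha_{T_1^{-1}}=-\alpha_{T_1}$) to obtain the key formula $2(\alpha_f+\sigma_f(\alpha_f))=0$. This is exactly condition $(\dagger\dagger)$ for $\tau=\mathrm{Id}$, giving strong reversibility via Theorem \ref{Th1}; and since $\alpha_{f^2}=\alpha_f+\sigma_f(\alpha_f)$ and $\sigma_{f^2}=\mathrm{Id}$, it also gives $\alpha_{f^4}=2\alpha_{f^2}=0$, hence $f^4=\mathrm{Id}$. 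This replaces your Step 3 as well: your proposed route through condition (3) of the Consequences of Lemma \ref{alpha} gives information about $\alpha(T')$, not $\alpha(f)$, and does not by itself yield $4\alpha_j(f)=0$.
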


\begin{proof}
\textit{Reduction to the case  $\sigma_T=Id$.} Let $f\in G_n$ reversible by $T$ with $\sigma_T ^{s} = \sigma_f^t$, with $s$ odd. By  Properties \ref{Pro2}, the map $T_1=T ^{s} f^{-t}$ reverses $f$ and $\sigma_{T_1}=Id$. 

\medskip

\ \ \ \  \  \textit{Let $f\in G_n$ reversible by $T$ with $\sigma_T=Id$.} 

\begin{enumerate}
\item  As $\sigma_T=Id$, one has  $\sigma_f =(\sigma_f)^{-1}$.
\item  The necessary and sufficient condition of Properties \ref{Pro3} (4) can be written as:
  $$\alpha_{T^{-1}}+ \alpha_f +\sigma_{f} (\alpha_T) + \sigma_f(\alpha_f)= 0,$$
composing by $\sigma_f$, one get 
 $$\sigma_{f}(\alpha_{T^{-1}})+ \sigma_{f}(\alpha_f) +\alpha_T + \alpha_f= 0.$$
As $\sigma_T=Id$ one has $\alpha_{T^{-1}}=-\alpha_T$, summing the previous equalities one get
 \[\label{CNS}  2(\alpha_f  + \sigma_f(\alpha_f))= 0.  \tag{A}\]

Since $\sigma_T=Id$ and $\sigma_f$ is an involution, Formula (\ref{CNS}) leads to the necessary and sufficient condition for strongly reversibility given by Theorem \ref{Th1}.

\smallskip

\item Formula (1) of Properties \ref{Pro3}  implies that  $\alpha_{f^2} = \alpha_f  + \sigma_f(\alpha_f)$   and   $\alpha_{f^4} =2\bigl( \alpha_f + \sigma_f(\alpha_f)\bigl)$. 

In conclusion, on has $\alpha_{f^4} =0$ by Formula(\ref{CNS})  and   $\sigma_f ^4= Id$, so $f^4=Id$. 
\end{enumerate} \end{proof} 

\noindent{\bf Proof of Theorem \ref{Th2} when $m$  is even}. Let $i \in \{1,....,n\}$.

\medskip

{\bf Case 1.} \textit{It doesn't  exist $s$ odd such that $\sigma_T^{s}(i) =\sigma_f^t (i)$.}

\smallskip

Item (2) of Lemma \ref{case} allows us to define $T_0$ as $T_0(x)= T^{\epsilon(j)}(x)$ for $x \in I_j$. 

Writing $j=\sigma_T^s \sigma_f^t (i)$ we have $T_0(x)= 
\left\{ \begin{array}{c} 
T(x) {\text{ \ \ \ if }} x\in I_{\sigma_T^s \sigma_f^t (i)} {\text{ and }}  s {\text{  is even, }} \\
T^{-1}(x) {\text{ if }} x\in I_{\sigma_T^s\sigma_f^t (i)} {\text{ and }} s {\text{ is \ odd.}} \end{array} \right.$

One can easily  check  that $T_0$ is an involution that reverses $f$.
\bigskip

{\bf Case 2.} \textit{There exists $s$ odd such that $\sigma_T^{s}(i) =\sigma_f^t (i)$.} Let $T_1= T^s f^{t}$.

\smallskip

By Properties \ref{Pro2}, the map $T_1$ reverses $f$ and $\sigma_{T_1}(i)=\sigma_T^s\sigma_f^t (i)= \sigma_f^{-t} \sigma_T^s(i)=i$. Then $\langle\sigma_{T_1},\sigma_f\rangle (i)=\langle\sigma_f \rangle(i)$.

In addition, we have $\sigma_{T_1}(\sigma_f^k (i))=\sigma_f^{-k} (\sigma_{T_1} (i))=\sigma_f^{-k} (i)$ and   $\sigma_{T_1}(\sigma_f^{-k} (i))=\sigma_f^{k} (i)$,  for any $k$. 

In conclusion, $\sigma_{T_1}$ is an involution on $\langle\sigma_f\rangle (i)$ and $\sigma_{T_1}(i)=i$. 

\smallskip

W.l.o.g. we can consider $\sigma_f= (1,....,n)$ and $ \sigma_{T_1}(j)=N-j $ for some $N=1,..., n+1$.

For clarity, we denote $\alpha_j(f)=\alpha_j$ and $\alpha_j(T_1)=\beta_j$, for all $j=1,..., n$. Let $x \in I_j$, 

\noindent one has that $T_1\circ f(x)=x+\alpha_j+\beta_{j+1}$ and $f^{-1}\circ T_1(x)= x+\beta_j-\alpha_{N-1-j}$.

\medskip

Therefore $T_1$ reverses $f$ if and only if for any $j=1,...n$ it holds that $$\alpha_j+\beta_{j+1}=\beta_j-\alpha_{N-1-j}.$$
Summing from 1 to $n$ we get $2 \sum_{j=1}^n \alpha_j=0$. We conclude by noting that it is the necessary and sufficient condition for strongly reversibility in the case of a $\sigma_{T_1}$-invariant $\sigma_f$-cycle.


\section{Reversibility in $ \mathcal G$. Proofs of Theorems \ref{Th3}, \ref{Th4.1}, \ref{Th4} and \ref{extfree}.}
\subsection{Preliminaries.}
\subsubsection{Dynamical properties of IETs}

In this section, we recall the Arnoux-Keane-Mayer decomposition Theorem  (\cite{Ar}, \cite{Ke}, \cite{May}) and Novak's work on the growth rate of the number of discontinuities for iterates of an IET (\cite{No}).

\begin{defi} Let $f\in \mathcal G$.

The \textbf{break point set} of $f$ is  obtained  by adding  the initial point $\{0\}$ to the discontinuity set of $f$,  it is denoted by $BP(f)$.

The set consisting in the $f$-orbits of points in  $BP(f)$ is  denoted by $BP_{\infty}(f)$.

A subset $V$ of $[0,1)$ is said \textbf{of type $\mathcal M$} if it is a non empty finite union of intervals each of the form $[b,c) $, with $b,c$ in $BP_{\infty}(f)$.  A type $\mathcal M$ and $f$-invariant  set  that is minimal for the inclusion among  type $\mathcal M$ and $f$-invariant subsets of $[0,1)$ is called an \textbf{$f$-component}.
\end{defi}

\smallskip
It is well known that IETs decompose into minimal and periodic components. This decomposition was first studied for measured surface flows by
Mayer in 1943  (\cite{May}) and restated for IETs by Arnoux (\cite{Ar}) and Keane (\cite{Ke}).

\smallskip

\textbf{The Arnoux-Keane-Mayer decomposition Theorem}  claims that $[0,1)$ can be decomposed as $[0,1)= P_1\cup ...P_l \cup M_1...\cup M_m,$  where
\begin{itemize}
\item $P_i$ is an  $f$-periodic component: $P_i$ is the $f$-orbit of an interval $[b,c)$ with $b,c$ in $BP_{\infty} (f)$ and all iterates $f^k$ of $f$ are continuous on $[b,c)$.   In particular points in $P_i$ are periodic of the same period. 
\item $M_j$ is an  $f$-minimal component: for any $x\in M_j$, the orbit $\mathcal O_f(x)$ is dense in $M_j$.
\end{itemize}

\begin{remar}\label{perp}
Note that $f$-periodic points of same period $p$ may belong to distinct components however the set $Per_p(f)$ consisting in $f$-periodic points of period $p$ is a finite union of periodic components and it is a type $\mathcal M$ and $f$-invariant  subset.
\end{remar}

\begin{propri}\label{conjrot} (\cite{No}, Lemma 5.1 and its proof.)
\item Two irrational rotations $R_\alpha$  and  $R_\beta$ with $\alpha\not= \beta (mod 1)$ are nonconjugate in $\mathcal G$. 
\item The centralizer in $\mathcal G$ of an irrational rotation is the rotation group $\mathbb S^1$.
\end{propri}

\begin{lemm} \label{comp} \ 
Let $f$ and $h$ be  IETs such that $h$ reverses $f$.
\begin{itemize}
\item The image by $h$ of a minimal component of $f$  is a minimal component of $f$.
\item Given $p\in \mathbb N^*$, the set $Per_p(f)$  is $h$-invariant.
\end{itemize}
\end{lemm}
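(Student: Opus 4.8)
The plan is to extract everything from the single reversing relation $hfh^{-1}=f^{-1}$, by showing that $h$ permutes the $f$-orbits, and then reading off both statements from this.

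First I would record the orbit-permuting identity. From $hfh^{-1}=f^{-1}$ one gets $f^k h = h f^{-k}$ for every $k\in\mathbb Z$, so $f^k(h(x))=h(f^{-k}(x))$ and therefore, since $h$ is a bijection of $[0,1)$,
\[
\mathcal O_f(h(x))=\{f^k(h(x)):k\in\mathbb Z\}=\{h(f^{-k}(x)):k\in\mathbb Z\}=h(\mathcal O_f(x)).
\]
Thus $h$ maps each $f$-orbit bijectively onto an $f$-orbit and, being injective, preserves orbit cardinalities. The periodic statement is then immediate: $x$ has $f$-period exactly $p$ iff $\mathcal O_f(x)$ has exactly $p$ points iff $\mathcal O_f(h(x))=h(\mathcal O_f(x))$ has exactly $p$ points iff $h(x)$ has $f$-period exactly $p$. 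Hence $h$ restricts to a bijection of $Per_p(f)$ onto itself, i.e. $Per_p(f)$ is $h$-invariant.

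For the minimal statement, let $M$ be a minimal component of $f$. Since a bijection and its inverse share the same invariant sets, $M$ is $f^{-1}$-invariant, and from $fh=hf^{-1}$ I get $f(h(M))=h(f^{-1}(M))=h(M)$, so $h(M)$ is $f$-invariant. Next, for $y=h(x)\in h(M)$ we have $\mathcal O_f(y)=h(\mathcal O_f(x))$ with $\mathcal O_f(x)$ dense in $M$; I would show that $\mathcal O_f(y)$ is dense in $h(M)$. Granting this, $h(M)$ is an $f$-invariant set all of whose $f$-orbits are infinite (they match those in $M$) and dense in it, so it is dynamically irreducible: picking $y\in h(M)$, its orbit lies in a single component $M'$ of the Arnoux--Keane--Mayer decomposition, that component is minimal (its orbits are infinite), and $\overline{h(M)}=\overline{\mathcal O_f(y)}=\overline{M'}$; as both are finite unions of half-open intervals with equal closure, $h(M)=M'$ is a minimal component of $f$.

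The main obstacle is exactly the density transfer, because $h$ is only piecewise continuous and so need not send dense subsets to dense subsets. I would handle it using that $h$ has only finitely many discontinuities $D$ and, being locally a translation off $D$, is right-continuous. For $w\in M\setminus D$ approached from the right by points $w_n\in\mathcal O_f(x)$ of $M$, right-continuity gives $h(w_n)\to h(w)$, so $h(M\setminus D)\subseteq\overline{\mathcal O_f(y)}$; since $D$ is finite and $M$ is a finite union of intervals, $h(M\setminus D)$ is dense in $h(M)$, whence $\overline{\mathcal O_f(y)}\supseteq h(M)$, as required. This finite-exceptional-set argument is the only delicate point; the rest is formal manipulation of the relation $f^k h=hf^{-k}$.
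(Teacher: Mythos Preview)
Your proof is correct and follows the same approach as the paper, namely deriving both items from the orbit-transfer identity $\mathcal O_f(h(x))=h(\mathcal O_f(x))$ coming from $f^k h=hf^{-k}$. You are in fact more careful than the paper about the density transfer under the merely piecewise-continuous map $h$: the paper simply asserts that $h(\mathcal O_f(x))$ is dense in $h(M)$ and that this forces $h(M)$ to be a minimal component, whereas you justify these steps via right-continuity and the Arnoux--Keane--Mayer decomposition.
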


Indeed, let $x \in M$ a minimal component of $f$. Since $f^n(h(x))= h(f^{-n} (x))$ one has $\mathcal O_f(h(x))=h(\mathcal O_f(x))$ and therefore  $\mathcal O_f(h(x))$ is dense in $h(M)$ meaning that $h(x)$ belong to a minimal $f$-component that is exactly $h(M)$.

Let $x$ be a $p$-periodic point of $f$, one has $f^p(h(x))= h(f^{-p}(x)) =h(x)$ and 
$f^k(h(x)) = h(f^{-k}(x)) \not=h(x)$ for $0<k<p$. Therefore $h(x)$ is $f$-periodic of period $p$.  \hfill $\square$

\smallskip

We let the reader check the following
\begin{propri}\label{proBP} Let $f$ and $g$ in $\mathcal G$. 
\begin{enumerate}[(a)]  
\item $BP(f\circ g) \subseteq BP ( g)\cup g^{-1}(BP (f))$,
\item  $BP (f^{-1})=f(BP (f))$ and
\item $BP (f^n) \subseteq BP (f)\cup f^{-1}(BP (f))\cup ...\cup  f^{-n+1}(BP (f)).$
\end{enumerate}
\end{propri}

\subsubsection{Basic algebra of $BS(1,-1)=\langle a, b  \ \vert \ bab^{-1}=a^{-1} \rangle$} \ 
\begin{itemize}
\item The index $2$ subgroup $\langle a, b^2 \rangle$  is isomorphic to $\mathbb Z^2$.
\item The index $2$ subgroup $\langle b, a^2 \rangle$ is isomorphic to $BS(1,-1$). 
\item For any action of $BS(1,-1)$, the element $b$ preserves the fixed points set of $a$.
\end{itemize}

\begin{lemm}\label{FNBS} \  
\begin{enumerate}
\item Every element of $BS(1,-1)$ is equal to a unique element $a^pb^q$ with
$p$ and $q$ integers. 
\item An action $\rho : BS(1,-1) \longrightarrow \text{Bij}(X)$  on a space $X$ is not faithful if an only if there exists a positive integer $p$ such that either $\rho(a)^p= Id$ or $\rho(b)^{2p}= Id$. 
\end{enumerate}
\end{lemm}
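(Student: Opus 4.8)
Lemma \ref{FNBS} has two parts, and I would treat them separately.

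\medskip

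\textbf{Proof plan for Item (1).} The relation $bab^{-1}=a^{-1}$ is equivalent to $ba=a^{-1}b$, which lets me push every occurrence of $b$ to the right past any power of $a$ at the cost of inverting that power: $ba^k = a^{-k}b$. The plan is to argue that any word $w$ in $a^{\pm 1}, b^{\pm 1}$ can be normalized to the form $a^p b^q$. First I would establish the commutation identity $b^m a^k = a^{(-1)^m k} b^m$ by a short induction on $m\in\mathbb Z$ (treating negative $m$ via $b^{-1}a = a^{-1}b^{-1}$, which follows from the defining relation). Given this, I would process an arbitrary word from left to right, repeatedly using the identity to move all $b$-letters rightward and collect $a$-letters on the left, yielding existence of the $a^pb^q$ form. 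For \emph{uniqueness}, it suffices to exhibit a single faithful action of $BS(1,-1)$ on which the elements $a^pb^q$ are pairwise distinct; the natural candidate is the affine action on $\mathbb Z$ (or $\mathbb R$) where $a(x)=x+1$ and $b(x)=-x$, so that $a^pb^q$ acts as $x\mapsto (-1)^q x + p$. Reading off the translation part $p$ and the sign $(-1)^q$ recovers $p$ and the parity of $q$; distinguishing $q$ itself requires one extra faithful coordinate, e.g.\ a $\mathbb Z$-valued second component tracking $q$, obtained from the abelianization-type projection $b\mapsto 1$. Thus distinct pairs $(p,q)$ give distinct group elements.

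\medskip

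\textbf{Proof plan for Item (2).} I would prove both implications. The easy direction is that if $\rho(a)^p=\mathrm{Id}$ or $\rho(b)^{2p}=\mathrm{Id}$ for some positive $p$, then a nontrivial element of $BS(1,-1)$ lies in the kernel, so $\rho$ is not faithful; here I only need that $a^p\neq e$ and $b^{2p}\neq e$ in $BS(1,-1)$ for $p\geq 1$, which follows from the normal form in Item (1) since these are $a^pb^0$ and $a^0b^{2p}$ with $(p,0)\neq(0,0)$ and $(0,2p)\neq(0,0)$. The substantive direction is the converse: assuming $\rho$ is not faithful, I must produce such a relation. Suppose $a^pb^q$ lies in the kernel with $(p,q)\neq(0,0)$. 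The key observation is that $\rho(b)$ conjugates $\rho(a)$ to its inverse, so $\rho(a^pb^q)=\mathrm{Id}$ constrains the two generators separately. Conjugating the relation $\rho(a)^p\rho(b)^q=\mathrm{Id}$ by $\rho(b)$ and using $\rho(b)\rho(a)\rho(b)^{-1}=\rho(a)^{-1}$ yields $\rho(a)^{-p}\rho(b)^q=\mathrm{Id}$, and comparing with the original gives $\rho(a)^{2p}=\mathrm{Id}$. A parallel manipulation (or substituting back) forces a constraint on $\rho(b)^q$; tracking parities carefully should collapse the kernel element to one of the two advertised pure forms.

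\medskip

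The main obstacle I anticipate is the converse of Item (2): a single kernel element $a^pb^q$ of mixed type does not immediately decouple into a relation on $\rho(a)$ alone or $\rho(b)$ alone, and one must exploit that the kernel is a \emph{normal} subgroup (being a kernel) so that it is stable under conjugation by $a$ and $b$. The clean route is to note that $\ker\rho$ is normal in $BS(1,-1)$ and then analyze which normal subgroups can contain an element $a^pb^q$; conjugating by $b$ sends $a^pb^q$ to $a^{-p}b^q$, so the product $(a^pb^q)(a^{-p}b^q)^{-1}=a^pb^q b^{-q}a^{p}=a^{2p}$ lies in the kernel, forcing $\rho(a)^{2p}=\mathrm{Id}$, and similarly conjugating by $a$ produces a relation giving $\rho(b)^{2q}=\mathrm{Id}$. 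Distinguishing whether the minimal relation is on $\rho(a)$ (exponent $p$ rather than $2p$) versus $\rho(b)^{2p}$ is the delicate bookkeeping step, handled by splitting on the parity of $q$ and the sign behaviour of $\rho(b)$.
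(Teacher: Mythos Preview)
Your proposal is correct and follows essentially the same route as the paper. For Item~(1) the paper also argues existence by pushing $b$'s to the right and proves uniqueness by exhibiting a concrete faithful action (it uses the IET example in $G_4$ from Section~2 rather than your affine-plus-abelianization action, but the logic is identical). For Item~(2) the paper does the same case analysis you outline, phrased in the language of ``$\beta^q$ reverses/commutes with $\alpha$'' rather than ``the kernel is normal, so conjugate by $b$'', which is the same computation.

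One small correction: your claim that ``conjugating by $a$ produces a relation giving $\rho(b)^{2q}=\mathrm{Id}$'' is not right. Conjugating $a^pb^q$ by $a$ gives $a^{p+1-(-1)^q}b^q$; when $q$ is even this is the same element (no new information), and when $q$ is odd it yields $\rho(a)^2=\mathrm{Id}$, not a relation on $\rho(b)$. Fortunately you do not need this step at all: your $b$-conjugation already gives $\rho(a)^{2p}=\mathrm{Id}$, which settles the case $p\neq 0$, and if $p=0$ then $\rho(b)^q=\mathrm{Id}$ with $q\neq 0$ directly gives $\rho(b)^{2|q|}=\mathrm{Id}$. So the ``delicate bookkeeping'' you anticipate is in fact trivial, and the paper's proof confirms this: it dispatches the three cases ($q$ odd; $q=0$; $q$ even nonzero) in one line each.
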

\begin{proof}
Item (1).  Existence is easy and uniqueness can be proved by considering some specific
faithful action of $BS(1,-1)$, for instance the one described on the first figure of this paper.

\smallskip

Item (2).  Let us denote $\alpha= \rho(a)$ and $\beta=\rho(b)$. According to the normal form described in item (1), if $\rho$ is not faithful there exist two integers $p$ and $q$  such that $(p,q)\not=(0,0)$ and $\alpha^p \beta^q= Id$.

\quad If $q$ is odd then $Id=\alpha^p \beta^q$ reverses $\alpha$ which implies that $\alpha$ is an involution. 

\quad If $q=0$ then $\alpha^p =Id$.

\quad If $q\not=0$ is even then if $p=0$ then $\beta^q=Id$ if $p\neq 0$, $\alpha^p =\beta^{-q}$ commutes with $\beta$ but $\beta$ also reverses $\alpha^p$ so  $\alpha^p$ is an involution. \end{proof}

\subsection{Free faithful actions of $BS(1,-1)$ by IET. Proof of Theorem \ref{Th3}.}

\begin{defi} We define the growth rate of the number of discontinuities for the iterates of an IET $f$ on the $f$-orbit through a given point $x$ by
$${\mathcal N}_x(f)= \lim \limits_{n \rightarrow + \infty} \frac{\# \{ BP(f^n) \cap {\mathcal O}_x(f)\}}{n}.$$

\end{defi}
\begin{propri}\label{inva} Let $f$, $g$ in $\mathcal G$ and $x\in [0,1)$. 
\begin{enumerate}
\item ${\mathcal N}_x(f)$ is well defined and ${\mathcal N}_x(f)\in \{0, 1\}$.
\item  If ${\mathcal N}_x(f)=0$ for any $x$ then there exists $p$ such that $f^p$ is conjugated in $\mathcal G$ to a product of restricted rotations of pairwise disjoint supports.
\item ${\mathcal N}_x(f)={\mathcal N}_x(f^{-1}).$
\item $ BP(f) \cap {\mathcal O}_x(f)=\emptyset$ then  ${\mathcal N}_x(f)=0$.
\item $ {\mathcal N}_{g(x)}(g \circ f \circ g^{-1})={\mathcal N}_x(f).$
\end{enumerate}
\end{propri}
\begin{proof} Items (1) and (3) are consequences of Novak's work in Section 2 of \cite{No}.

\smallskip
Item (2) is a reformulation of Theorem 1.2 of \cite{No}.

\smallskip
Item (4) follows  from  item (c) of Properties  \ref{proBP}.

\smallskip
To prove item (5), we use item  (a) of Properties \ref{proBP}:

\smallskip

$BP(g \circ f^n \circ g^{-1})\subseteq BP (g^{-1})\cup g(BP (f^n)) \cup g\circ f^{-n}(BP (g))$ and setting $f_g=g \circ f \circ g^{-1}$,

\smallskip
 
$BP( f^n )=BP(g^{-1}(g \circ f^n \circ g^{-1})g) \subseteq BP (g)\cup g^{-1}(BP (f_g^n)) \cup g^{-1} \circ f_g^{-n}(BP (g^{-1}))$.
  
\medskip
  
Noting that $y\in  {\mathcal O}_x(f) $ if and only if  $g(y)\in  {\mathcal O}_{g(x)}(f_g)$ and setting $C=  2 \# BP(g)$, we get 
$$ \# \left( BP(f^n) \cap {\mathcal O}_{x}(f) \right)- C \leq  
\# \left( BP(f_g^n) \cap {\mathcal O}_{g(x)}(f_g)\right) \leq \# \left( BP(f^n) \cap {\mathcal O}_{x} (f) \right)+ C.$$
We conclude by dividing by $n$ and taking the limit. \end{proof}

{\bf Proof of Theorem \ref{Th3}} is inspired from the proof given by Minakawa for describing free actions of $\mathbb Z^2$ by circle PL-homeomorphisms (see \cite{Mi}).

The action $\langle f,h\rangle$ of $BS(1,-1)$ is free thus for every $x\in I$, one has ${\mathcal O}_{h^k(x)}(f) \cap {\mathcal O}_{h^q(x)}(f) = \emptyset$ provided that $k \neq q$.

Therefore, as $\# BP(f)$ is finite, there exists $N_0$ such that  ${\mathcal O}_{h^n(x)}(f) \cap BP(f) = \emptyset$ for any $n\geq N_0$.

Using item (4) of Properties \ref{inva}, one get  ${\mathcal N}_{h^n(x)}(f)=0$ for  $n\geq N_0$, in addition by item (5) $$ {\mathcal N}_{h^n(x)}(h^n \circ f \circ h^{-n})={\mathcal N}_x(f).$$

By reversibility, it holds that $ {\mathcal N}_{h^n(x)}(h^n \circ f \circ h^{-n})={\mathcal N}_{h^n(x)}(f^{\epsilon})$, where $\epsilon =1$ for $ n$ even and  $\epsilon =-1$ for $ n$ odd. Summarizing, for any $x \in I$, we get ${\mathcal N}_x(f)=0$.

Then by item (2) of Properties \ref{inva},  there exists $p$ such that $f^p$ is conjugated by $E$ in $\mathcal G$ to $\phi$, a product of restricted rotations of pairwise disjoint supports. Moreover, these restricted rotations are of infinite order and $I$ is the union of their supports since the action is faithful and free.

We apply to $\phi$ the following

\begin{lemm}\label{Lem7.2}
Let $\phi$ be a product of  restricted rotations of pairwise disjoint supports. Then there exist a positive integer $n$ and a PL-homeomorphism $R: I \rightarrow I$ such that $F=R\circ  \phi \circ R^{-1} \in G_n$ and $\sigma_F=Id$. Moreover, the map  $R^{-1}$ is affine on the $I_i$'s.
\end{lemm}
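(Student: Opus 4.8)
Given $\phi$, a product of restricted rotations with pairwise disjoint supports covering $I$, I need to produce a PL-homeomorphism $R$ and an integer $n$ so that $F = R\circ\phi\circ R^{-1}$ lies in $G_n$ with $\sigma_F = Id$, and with $R^{-1}$ affine on each $I_i$.

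Let me unpack what's being claimed. A restricted rotation on a subinterval $J$ is, by definition, an IET conjugated through a direct affine map $J\to[0,1)$ to a 2-IET. A 2-IET (with 2 intervals) is just a rotation on the circle. So each piece of $\phi$ is affinely conjugate to a rotation. The point of $G_n$ with $\sigma_F=Id$ is that $F$ preserves the standard partition into $n$ equal pieces $I_i = [\frac{i-1}{n},\frac{i}{n})$ and acts on each as a rotation $R_{\alpha_i}$ of the little circle $\mathbb{S}_n$ of length $1/n$. So essentially the claim is: after rescaling the supports of $\phi$ to all have equal length $1/n$ via a PL map, each restricted rotation becomes an honest rotation of a little circle, i.e. an element of $G_n$ with trivial permutation.

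**How the supports must look.** The key structural constraint: since $\phi\in\mathcal{G}$ is an IET and the restricted rotations have pairwise disjoint supports whose union is $I=[0,1)$, the supports must be a finite collection of disjoint half-open intervals partitioning $[0,1)$. Say there are $N$ of them, $J_1,\dots,J_N$, with lengths $\ell_1,\dots,\ell_N$ summing to $1$. Here I need to be careful: a restricted rotation's support is asserted to be a single interval, and the partition into supports must be a genuine interval partition of $[0,1)$. Each $\phi|_{J_i}$ is a 2-IET on $J_i$, i.e. it has (generically) one interior break point and swaps two subintervals of $J_i$ by translation — the affine model being a rotation.

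**The construction.** My plan is: first reduce the lengths to a common value. The obstruction is that the $\ell_i$ need not be equal nor even rational. So I cannot literally rescale to equal length by a single global affine map. Instead, I choose $n=N$ (the number of supports) and build $R:[0,1)\to[0,1)$ to be the PL-homeomorphism that maps each support $J_i$ affinely onto the standard cell $I_i=[\frac{i-1}{N},\frac{i}{N})$, increasingly, taking the left endpoint of $J_i$ to $\frac{i-1}{N}$. This $R$ is an increasing bijection, piecewise affine with $N$ pieces, hence a PL-homeomorphism of $[0,1)$, and its inverse $R^{-1}$ is affine on each $I_i$ — exactly the last assertion. Then I set $F = R\circ\phi\circ R^{-1}$. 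Since $\phi$ preserves each $J_i$ (restricted rotations have fixed support and $\phi$ is their product with disjoint supports), $F$ preserves each $I_i$, so $\sigma_F = Id$. It remains to check that $F|_{I_i}$ is a rotation of $\mathbb{S}_n$, i.e. that $F|_{I_i}$ has exactly one interior discontinuity and acts by translation mod $1/n$: this is because $\phi|_{J_i}$ is affinely conjugate (by a direct affine map $J_i\to[0,1)$, which after composing with the standard affine identification $I_i\simeq[0,1)$ differs from $R|_{J_i}$ only by an affine self-map of $[0,1)$) to a rotation, and a direct-affine conjugate of a 2-IET on an interval, rescaled to $I_i$, is precisely a 1-discontinuity IET of $I_i$, i.e. $R_{\alpha_i,i}$ in the notation of Definition \ref{Gn}.

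**The main obstacle.** The genuinely delicate point is verifying that $F|_{I_i}$ is a rotation of the little circle and not merely some affine interval exchange: I must confirm that conjugating a restricted rotation by the composite affine map $I_i\to J_i\to[0,1)$ yields a map whose slope is $1$ (a translation), so that $F$ lands in $\mathcal{G}\cap G_n$ rather than in the larger affine group. This follows because both the defining affine conjugacy of the restricted rotation and the affine chart $R|_{J_i}$ are \emph{direct} (orientation- and measure-proportion-preserving) affine maps between intervals, and conjugating a rotation by such maps preserves being a rotation — the break-point data transform correctly and the two affine normalizations have reciprocal slopes, cancelling to give slope $1$ on $I_i$. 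I would carry this out by writing the two affine identifications explicitly and composing, checking that $F|_{I_i}$ is $x\mapsto x+\alpha_i \pmod{1/n}$ for the appropriate rotation angle $\alpha_i$ inherited from the restricted rotation on $J_i$. Everything else — PL-ness of $R$, affineness of $R^{-1}$ on cells, triviality of $\sigma_F$ — is then immediate from the construction.
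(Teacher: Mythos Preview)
Your construction is essentially the paper's: partition $I$ into consecutive half-open intervals, send each affinely onto a standard cell $I_i$, and observe that the conjugate preserves each $I_i$ as a rotation. The one gap is that you added the hypothesis that the supports cover $I$, which the lemma does not assume; the paper handles this by letting the $J_i$ be \emph{either} the supports of the restricted rotations \emph{or} the connected components of $I\setminus\mathrm{supp}(\phi)$ (on which $\phi$ is the identity, hence trivially a rotation of angle $0$), and then proceeds exactly as you do.
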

\begin{proof} Let us decompose $I$ as a disjoint union of consecutive half open intervals $J_i$, $i=1,...,n$, where $J_i$ is either the support of a restricted rotation or it is a connected component of  $I\setminus supp(F)$. 

We define $R$ as the PL-homeomorphism such that $R(J_i)=I_i=[ \frac{i-1}{n},  \frac{i}{n})$. It is easily seen that  $F=R\circ \phi  \circ R^{-1} \in G_n$ and $\sigma_F=Id$.\end{proof}

Therefore  $F=R\circ \phi  \circ R^{-1} \in G_n$, $\sigma_F=Id$ and $F_{\vert I_i}$ is minimal. We first prove that the corresponding $H=R\circ \eta \circ R^{-1} \in G_n$ where $\eta=E\circ h\circ E^{-1}$, that is a priori an AIET, also belongs to $G_n$. 

Let us denote  $J_i$, $ i=1,....,n$ the minimal components of $\phi$. According to Lemma \ref{comp}, there exists a permutation $\gamma \in \mathcal S_n$ such that
the IET $\eta$ sends $J_i$  to $J_{\gamma(i)}$. In particular $J_i$  and $J_{\gamma(i)}$ have the same length. Conjugating by the PL-homeomorphism $R$ we get that $H$ sends $R(J_i)=I_i$ to $R(J_{\gamma(i)})=I_{\gamma(i)}$.
Noting that the restrictions of $R$ to $J_i$ and to $J_{\gamma (i)}$ are affine with the same slope, we get that $H$ is an IET.

Since $H$ conjugates $F_{\vert I_i}$ to $F^{-1}_{\vert I_{\gamma(i)}}$,
Properties \ref{conjrot} implies that $\alpha_{\gamma(i)} (F) =-\alpha_{i}(F)$.

Therefore it can be check that $F=(\alpha_F, Id)$ is reversible by $g\in G_n$ defined by $\alpha (g) =0$ and $\sigma_g = \gamma$.

Finally, $H g^{-1}$ is an IET that commutes with $F$ and fixes each $I_i$. Since the centralizer of an irrational rotation is the rotation group $\mathbb S^1$, one has  $H g^{-1} = R_{\beta_i}$ on any $I_i$. Thus $H =g R_{\beta_i}$ on any $I_i$ meaning that  $H\in G_n$.

We finishes the proof of Theorem  \ref{Th3} by proving analogously that $(RE)\circ f \circ (RE)^{-1}\in G_n$. This is provided by the following 
 
 \begin{lemm}\label{VirtGn}
Let $f$ be an IET without periodic points and such that  $f^p\in G_n$  for some positive integer $p$. Then $f\in G_n$.
 \end{lemm}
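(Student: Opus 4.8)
The plan is to reduce to a power of $f$ whose associated permutation is trivial, use that $f$ has no periodic points to see this power acts as an irrational rotation on each block $I_i$, and then exploit the rigidity of irrational rotations recorded in Properties~\ref{conjrot}.

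First I would set $q=p\cdot \mathrm{ord}(\sigma_{f^p})$, where $\mathrm{ord}$ denotes the order of the permutation. Since $G_n$ is a group and $q$ is a multiple of $p$, one has $f^q=(f^p)^{q/p}\in G_n$, and by Properties~\ref{Pro3}(1) its associated permutation is $\sigma_{f^q}=(\sigma_{f^p})^{q/p}=\mathrm{Id}$. Thus $f^q$ fixes each block $I_i$ and $f^q_{\vert I_i}=R_{\gamma_i,i}$ is a rotation of the circle $\mathbb S_n$ of length $1/n$. As $f$ has no periodic points, neither does any power $f^q$ (a periodic point of $f^q$ would be a periodic point of $f$); hence each $\gamma_i$ is irrational, $f^q_{\vert I_i}$ is minimal, and $I_1,\dots,I_n$ are precisely the minimal components of $f^q$.

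Next, since $f$ commutes with $f^q$, the argument of Lemma~\ref{comp} shows that $f$ carries minimal components of $f^q$ to minimal components of $f^q$; therefore $f$ permutes the blocks, $f(I_i)=I_{\pi(i)}$ for some $\pi\in\mathcal S_n$, which already yields that $f$ preserves the partition of Definition~\ref{Gn}. It remains to check that $\psi:=f_{\vert I_i}\colon I_i\to I_{\pi(i)}$ has a single interior discontinuity. Restricting the identity $f\circ f^q=f^q\circ f$ to $I_i$ gives $\psi\circ R_{\gamma_i}=R_{\gamma_{\pi(i)}}\circ\psi$, so $\psi$ conjugates $R_{\gamma_i}$ to $R_{\gamma_{\pi(i)}}$. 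Identifying $I_i$ and $I_{\pi(i)}$ with $\mathbb S_n$, the map $\psi$ becomes a circle IET conjugating two irrational rotations, so Properties~\ref{conjrot} forces $\gamma_i=\gamma_{\pi(i)}$; then $\psi$ centralizes the irrational rotation $R_{\gamma_i}$ and, again by Properties~\ref{conjrot}, is itself a rotation. Hence $f_{\vert I_i}$ is of the form $R_{\alpha_i,\pi(i)}$ with a single interior discontinuity, and by Definition~\ref{Gn} we conclude $f\in G_n$.

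The hard part will be the passage establishing that $f$ genuinely permutes the blocks and that the rigidity of Properties~\ref{conjrot} applies: one must be sure that the minimal components of $f^q$ are exactly the individual blocks (which is where the no-periodic-points hypothesis is essential, ruling out rational $\gamma_i$), and that Properties~\ref{conjrot}, stated for the unit circle, transfers verbatim to the circle $\mathbb S_n$ of length $1/n$ after the affine rescaling. Once the blocks are seen to be permuted, the centralizer statement does the rest, and no delicate estimate is needed beyond these two points.
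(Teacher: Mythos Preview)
Your proof is correct and follows essentially the same route as the paper: pass to a power with trivial permutation, use the absence of periodic points to make each $f^q_{\vert I_i}$ an irrational rotation so that the $I_i$'s are the minimal components, deduce that $f$ permutes the blocks, and then invoke Properties~\ref{conjrot} to force the restriction to each block to be a rotation. The only cosmetic difference is that the paper introduces an auxiliary element $\tau\in G_n$ with $\sigma_\tau=\pi$ and $\alpha(\tau)=0$ and argues that $\tau^{-1}f$ centralizes $f^q$ on each $I_i$, whereas you argue directly that $\psi=f_{\vert I_i}$ conjugates $R_{\gamma_i}$ to $R_{\gamma_{\pi(i)}}$; both conclude via the centralizer statement in Properties~\ref{conjrot}.
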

 
Eventually passing to a power of $f^p$ we can suppose that  $\sigma_{f^p}=Id$.
 
First, notice that $f$ permutes the minimal components $I_i$ of $f^p$ that is there exists $\sigma \in \mathcal S_n$ such that $f(I_i) = I_{\sigma(i)}$.

Since $ f^p$ is an irrational rotation on $I_i$ and  $f$ commutes with $f^p$, it follows that $f$ sends an interval where $f^p$ is a rotation of angle  $\alpha$ to  one  with the same angle. This is a consequence of the fact that the angle $\alpha$  of a rotation is invariant by conjugacy in IET.
  
Thus, the periodic element  $\tau$ of  $G_n$ defined by $\sigma_\tau=\sigma$ and $\alpha(\tau) = 0$ commutes with $F^p$.  
 
Therefore $\tau^{-1} f$ preserves each $I_i$ and commutes with $f^p$.
  
Since the centralizer of an irrational rotation is the rotation group $\mathbb S^1$, one has $\tau ^{-1} f = R_{\beta_i}$ on any $I_i$. Thus $f = \tau R_{\beta_i}$ on any $I_i$ meaning that $f\in G_n$.
\medskip

\begin{remar}\label{Raffine}
Noting that $h$ and $f$ permute the minimal components of $f^p$, we get that any $M_i:=\langle\phi, \eta\rangle (J_i)$ is the union of finitely many $J_k$ that have same length and then $R$ is affine on    any $M_i$.
\end{remar}
\subsection{Some extensions to other groups. Proof of Theorem \ref{extfree}.}
By similar arguments, we can establish the following:

\begin{prop}\label{z2}
Any free faithful actions of $\mathbb Z^2$ by IET is ($PL\circ IET$)-conjugated to a  $\mathbb Z^2$-action in some $G_n$.
\end{prop}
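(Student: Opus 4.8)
The plan is to run the argument of Theorem \ref{Th3} almost verbatim, the commutation relation of $\mathbb Z^2$ replacing the reversing relation of $BS(1,-1)$ and in fact lightening the growth-rate step. Write the given action as $\langle f,h\rangle$ with $fh=hf$; since $\mathbb Z^2$ is torsion free and the action is free and faithful, each of $f$ and $h$ has infinite order and no periodic point. First I would record, as in Theorem \ref{Th3}, that freeness forces $\mathcal O_{h^k(x)}(f)\cap\mathcal O_{h^q(x)}(f)=\emptyset$ for $k\ne q$: a common point, say $f^ah^k(x)=f^bh^q(x)$, gives $f^{a-b}h^{k-q}(x)=x$ by commutativity, hence $f^{a-b}h^{k-q}=Id$ by freeness and $(a-b,k-q)=(0,0)$ by faithfulness. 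As $BP(f)$ is finite, each of its points lies in at most one orbit $\mathcal O_{h^n(x)}(f)$, so there is $N_0$ with $\mathcal O_{h^n(x)}(f)\cap BP(f)=\emptyset$ for $n\ge N_0$, whence $\mathcal N_{h^n(x)}(f)=0$ by item (4) of Properties \ref{inva}.

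Here commutativity pays off. By item (5) of Properties \ref{inva} one has $\mathcal N_{h^n(x)}(h^nfh^{-n})=\mathcal N_x(f)$, and since $h^nfh^{-n}=f$ this reads $\mathcal N_x(f)=\mathcal N_{h^n(x)}(f)=0$ for every $x$, dispensing with the $\mathcal N_x(f)=\mathcal N_x(f^{-1})$ device needed for $BS(1,-1)$. Item (2) of Properties \ref{inva} then yields $p$ and an IET $E$ with $\phi:=Ef^pE^{-1}$ a product of restricted rotations of pairwise disjoint supports; freeness forces all these rotations to be irrational and their supports to cover $[0,1)$ (a rational one, or a gap on which $\phi=Id$, would produce periodic points of $f^p$ and hence of $f$). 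Lemma \ref{Lem7.2} supplies a PL-homeomorphism $R$ with $F:=R\phi R^{-1}\in G_n$, $\sigma_F=Id$, each $F_{\vert I_i}$ an irrational rotation, and $R^{-1}$ affine on every $I_i$.

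It then remains to check that conjugation by $RE$ sends both generators into $G_n$. Put $\eta=EhE^{-1}$ and $H=R\eta R^{-1}$ (a priori an AIET). As $h$ commutes with $f$, $\eta$ commutes with $\phi$, so the computation of Lemma \ref{comp} — valid verbatim for commuting maps, with $f^n$ in place of $f^{-n}$ — shows $\eta$ permutes the minimal components $J_i$ of $\phi$, say $\eta(J_i)=J_{\gamma(i)}$ with $\vert J_i\vert=\vert J_{\gamma(i)}\vert$. Hence $R$ has equal slopes on $J_i$ and $J_{\gamma(i)}$, so $H$ is an IET carrying $I_i$ to $I_{\gamma(i)}$ and conjugating $F_{\vert I_i}$ to $F_{\vert I_{\gamma(i)}}$; since conjugacy preserves the rotation angle, Properties \ref{conjrot} give $\alpha_{\gamma(i)}(F)=\alpha_i(F)$. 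Consequently the element $g\in G_n$ with $\alpha(g)=0$ and $\sigma_g=\gamma$ commutes with $F$, the map $Hg^{-1}$ fixes each $I_i$ and commutes with the irrational rotation $F_{\vert I_i}$, and the centralizer statement of Properties \ref{conjrot} gives $Hg^{-1}=R_{\beta_i}$ on $I_i$, so $H=gR_{\beta_i}\in G_n$. Finally $(RE)f(RE)^{-1}$ is an AIET without periodic points whose $p$-th power is $F\in G_n$, so Lemma \ref{VirtGn} places it in $G_n$ as at the end of Theorem \ref{Th3}, completing the proof.

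The genuinely delicate points are the two passages from an a priori affine map to an element of $G_n$: certifying that $H$ and that $(RE)f(RE)^{-1}$ are honest IETs of $G_n$. Both rest on the rotation angle being a topological conjugacy invariant and on the centralizer of an irrational rotation being the full rotation group (Properties \ref{conjrot}), together with Lemma \ref{VirtGn}. By contrast the growth-rate half is strictly easier than in Theorem \ref{Th3}, since commutativity makes $h^nfh^{-n}=f$ collapse exactly the step where reversibility was essential.
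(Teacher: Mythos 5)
Your proof is correct and is essentially the paper's intended argument: the paper establishes Proposition \ref{z2} only by remarking ``by similar arguments'' after Theorem \ref{Th3}, and your adaptation---commutativity replacing reversibility in the orbit-disjointness and growth-rate steps, then Lemma \ref{Lem7.2}, Properties \ref{conjrot} and Lemma \ref{VirtGn} to upgrade the a priori affine conjugates $H$ and $(RE)f(RE)^{-1}$ to elements of $G_n$---is exactly that argument. Your observation that commutativity makes the device $\mathcal N_x(f)=\mathcal N_x(f^{-1})$ unnecessary is a correct minor simplification within the same route.
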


\begin{prop}\label{CGn}
Let $f_1$, $f_2$ commuting IET satisfying:
\begin{enumerate}
\item $Per(f_1)=Per(f_2)= \emptyset$ and
\item $f_1$ is conjugated by $ A$ to $F_1$ in $G_n$, where $A= R\circ E$, with $E\in \mathcal G$ and $R$ a PL-homeomorphism such that $R^{-1}$ affine on the $I_i$'s.
\end{enumerate}
then $f_2$ is conjugated by $ A$ to $F_2$ in $G_n$.
\end{prop}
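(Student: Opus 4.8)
The plan is to mimic the two-step strategy already used for Theorem \ref{Th3} and Lemma \ref{VirtGn}: first establish that $F_2:=A f_2 A^{-1}$, which is a priori only an AIET, is in fact a genuine IET preserving the partition into the $I_i$'s, and then use its commutation with $F_1$ together with the rigidity of irrational rotations to conclude $F_2\in G_n$. Write $A=R\circ E$ and set $\phi_j=E f_j E^{-1}$ for $j=1,2$, so that $\phi_1,\phi_2$ are commuting IETs with $Per(\phi_j)=\emptyset$, and $F_1=R\phi_1 R^{-1}\in G_n$, $F_2=R\phi_2 R^{-1}$. Since $F_1\in G_n$ has no periodic points, I would first pass to a power $p$ for which $\sigma_{F_1^p}=Id$; then $F_1^p$ restricts to an irrational rotation $R_{\alpha_i}$ on each $I_i$, and the minimal components of $F_1^p$ are precisely the intervals $I_i$. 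Pulling back by $R$, which is affine on each $I_i$ by hypothesis, the minimal components of $\phi_1^p$ are the intervals $J_i:=R^{-1}(I_i)$.

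For the first step, I would argue that $F_2$ is an IET. Since $\phi_2$ commutes with $\phi_1^p$, the image under $\phi_2$ of any minimal component of $\phi_1^p$ is again a minimal component (the reasoning is that of Lemma \ref{comp}: orbits are carried to orbits). Hence there is $\gamma\in\mathcal S_n$ with $\phi_2(J_i)=J_{\gamma(i)}$, and since $\phi_2$ is an IET it preserves Lebesgue measure, so $|J_i|=|J_{\gamma(i)}|$. Conjugating by $R$ carries $I_i=R(J_i)$ to $I_{\gamma(i)}=R(J_{\gamma(i)})$; as the restrictions of $R$ to $J_i$ and to $J_{\gamma(i)}$ are affine with the same slope (paired components have equal length, and $I_i,I_{\gamma(i)}$ both have length $1/n$), the restriction $F_2|_{I_i}$ is a piecewise translation. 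Thus $F_2$ is an IET preserving the partition $\{I_i\}$.

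The second step upgrades this to membership in $G_n$. Now $F_2$ is an IET commuting with $F_1^p$ and permuting its minimal components $I_i$ via $\gamma$. Since the rotation angle is a conjugacy invariant in $\mathcal G$, $F_2$ carries $I_i$, where $F_1^p=R_{\alpha_i}$, to $I_{\gamma(i)}$, where $F_1^p$ must be the rotation of the same angle; consequently the element $\tau\in G_n$ defined by $\sigma_\tau=\gamma$ and $\alpha(\tau)=0$ commutes with $F_1^p$. Then $\tau^{-1}F_2$ preserves each $I_i$ and commutes with $F_1^p$, so by Properties \ref{conjrot}, the centralizer of an irrational rotation being the rotation group $\mathbb S^1$, we obtain $\tau^{-1}F_2=R_{\beta_i}$ on each $I_i$. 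Hence $F_2=\tau R_{\beta_i}$ on each $I_i$, that is $F_2\in G_n$, which proves that $f_2$ is conjugated by $A$ to an element of $G_n$.

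I expect the delicate point to be the first step, namely verifying that $F_2$ is a bona fide IET rather than merely an AIET: this hinges on the fact that $\phi_2$ permutes the minimal components of $\phi_1^p$ while preserving their lengths, combined with the hypothesis that $R^{-1}$ is affine on each $I_i$, so that paired components are rescaled with identical slopes and the affine distortion cancels. Once $F_2$ is known to be an IET, the second step is a routine application of the rotation-centralizer rigidity already exploited in the proof of Lemma \ref{VirtGn}.
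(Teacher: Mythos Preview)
Your proof is correct and follows essentially the same strategy as the paper. The only organizational difference is that the paper first passes to a power $f_2^m$ fixing each $J_i$ (so that $F_2^m$ is trivially an IET on each $I_i$), shows $F_2^m\in G_n$ via the centralizer of an irrational rotation, and then invokes Lemma~\ref{VirtGn} to descend to $F_2$; you instead handle $F_2$ directly by using the equal-length argument (exactly as in the proof of Theorem~\ref{Th3} for $H$) and then unfold the content of Lemma~\ref{VirtGn} in place.
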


Indeed, eventually changing $f_1$ for an iterate, we can suppose that the $J_i = R^{-1} (I_i)$ are the minimal components of $f_1$. As $f_2$ permutes the $J_i$'s, then there exists a positive integer $m$  such that the map $F_2^m= A f_2^m A^{-1}$ is an IET that preserves each $I_i$ and commutes with $F_1$. By properties 6.1, $F_2^m\in G_n$ therefore by Lemma 6.4, $F_2\in G_n$.

\smallskip

As a consequences of these propositions we have
\begin{theo}\label{ext2}
Let $G$ be a finitely generated virtually abelian and torsion free group.
If one of the following properties is satisfied 
\begin{enumerate}
\item $G$ contains an element conjugated to a product of restricted rotations with disjoint supports and without periodic points.
\item there exists a subgroup $\Gamma$ of $G$ that is isomorphic to $\mathbb Z ^2$ and acts freely on $I$.
\end{enumerate}
Then $ G$ is conjugated in  $PL\circ IET$ to a subgroup of some $G_n$.
\end{theo}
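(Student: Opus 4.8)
The plan is to conjugate the whole group at once by a single map $A=R\circ E$, using one distinguished element $g$ whose minimal components furnish a fixed partition that every element of $G$ will be shown to permute. First I would produce such a $g$ in both cases. Under hypothesis (1) it is given: an element $g$ with $Per(g)=\emptyset$ conjugate to a product of restricted rotations of disjoint supports, and applying Lemma \ref{Lem7.2} to $\phi=EgE^{-1}$ produces $A=R\circ E$ with $F_1:=AgA^{-1}\in G_n$, $\sigma_{F_1}=Id$, each $F_1|_{I_i}$ an irrational rotation, and $R$ affine on each $J_i:=A^{-1}(I_i)$, the minimal components of $g$. Under hypothesis (2), Proposition \ref{z2} already yields an $A$ with $A\Gamma A^{-1}\subseteq G_n$; a generator $\gamma$ of $\Gamma$ has $Per(\gamma)=\emptyset$ by freeness, so a suitable power $g=\gamma^N$ with $\sigma_{AgA^{-1}}=Id$ is a product of irrational restricted rotations, putting us in the same situation. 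Since $G$ is finitely generated, virtually abelian and torsion free, it contains a normal finite index subgroup $H\cong\mathbb Z^d$; replacing $g$ by a power I may assume $g\in H$, as powers preserve both $Per(g)=\emptyset$ and the product-of-irrational-restricted-rotations form.

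The central step is to prove that every $f\in G$ permutes the partition $\{J_1,\dots,J_n\}$. Fix $f$ and set $g':=fgf^{-1}$, which lies in $H$ by normality, commutes with $g$ since $H$ is abelian, and is conjugate to $g$, so $Per(g')=\emptyset$ and $g'$ has exactly $n$ minimal components. Commuting with $g$, the map $g'$ permutes $\{J_i\}$ through some $\kappa\in\mathcal S_n$; along each $\kappa$-cycle the first return map of $g'$ commutes with the irrational rotation $g|_{J_i}$, hence is a rotation, and having no periodic point it is irrational, so each cycle carries a single minimal component of $g'$. Thus the number of $\kappa$-cycles equals the number of minimal components, namely $n$, which forces $\kappa=Id$. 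Consequently $g'$ fixes each $J_i$ and is an irrational rotation there, so the minimal components of $g'$ are again the $J_i$. Since $f$ carries the minimal components of $g$ onto those of $g'$, it permutes $\{J_i\}$, say $f(J_i)=J_{\pi(i)}$ with $|J_i|=|J_{\pi(i)}|$ by measure preservation.

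It then remains to deduce $\Phi_f:=AfA^{-1}\in G_n$. Because $f$ permutes the $J_i$ with equal lengths and $R$ is affine on each $J_i$ with slope depending only on $|J_i|$ (as in Remark \ref{Raffine}), the slopes cancel and $\Phi_f$ is an IET with $\Phi_f(I_i)=I_{\pi(i)}$. Moreover $\Phi_f F_1\Phi_f^{-1}=Ag'A^{-1}$ is an IET preserving each $I_i$ and commuting with $F_1$, so by Properties \ref{conjrot} it is a product of irrational rotations, i.e. an element of $G_n$ with trivial permutation. Hence on each $I_i$ the restriction $\Phi_f|_{I_i}:I_i\to I_{\pi(i)}$ conjugates one irrational rotation to another; identifying both intervals affinely with $\mathbb S_n$ and invoking Properties \ref{conjrot} (equality of the two angles together with the description of the centralizer of an irrational rotation), this restriction must be a single rotation with shift of the form $R_{\beta_i,\pi(i)}$, which is exactly the shape required by Definition \ref{Gn}. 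Therefore $\Phi_f\in G_n$ for every $f$, that is $AGA^{-1}\subseteq G_n$, which is the claim.

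The main obstacle is precisely what prevents using Proposition \ref{CGn} and Lemma \ref{VirtGn} as black boxes: a torsion free virtually abelian group may contain elements with periodic points (rational rotations on some components, irrational on others, while staying torsion free globally), so one cannot simply transport every generator by the no-periodic-point machinery. The device that circumvents this is the counting argument above, where conjugation by $f$ sends $g$ to an element $g'$ that is at once conjugate to $g$ and commuting with it, and these two properties together pin down $\kappa=Id$, making the partition $\{J_i\}$ invariant under all of $G$. Once this rigidity is established, the conjugacy rigidity of irrational rotations finishes the argument without ever requiring the element being conjugated to be free of periodic points, so the hypotheses (1) or (2) are enough even when $G$ does not act freely.
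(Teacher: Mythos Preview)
Your proof is correct and takes a genuinely different route from the paper's. The paper argues as follows: choose generators $f_1,\dots,f_r$ with $f_1$ the distinguished element, find powers $f_i^{p_i}$ lying in the abelian finite-index subgroup (hence pairwise commuting), then apply Proposition~\ref{CGn} first to the pair $(f_1^{p_1},f_i^{p_i})$ and afterwards to $(f_i^{p_i},f_i)$ to push each generator into $G_n$. Both applications of Proposition~\ref{CGn} require $Per(f_i)=\emptyset$, a hypothesis which, as you correctly observe, is \emph{not} part of the statement of Theorem~\ref{ext2} (torsion freeness does not prevent individual elements from having periodic points). The paper's argument is therefore complete only under an implicit freeness assumption, which is satisfied in the intended application to Theorem~\ref{extfree} but not in the generality claimed.

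Your counting argument replaces this entirely. By pushing $g$ into the normal abelian core $H$ and studying $g'=fgf^{-1}\in H$, you exploit simultaneously that $g'$ \emph{commutes} with $g$ (so it permutes the minimal components $J_i$ and its first-return maps are rotations by Properties~\ref{conjrot}) and that $g'$ is \emph{conjugate} to $g$ (so it has exactly $n$ minimal components). Matching the cycle count of the permutation $\kappa$ against the number of minimal components forces $\kappa=Id$, whence every $f\in G$ permutes $\{J_i\}$ without any hypothesis on $Per(f)$. The passage from there to $\Phi_f\in G_n$ via equal lengths, cancellation of the affine slopes of $R$, and the conjugacy rigidity of irrational rotations is exactly right. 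One small point of wording: $R$ is affine on the intervals $E(J_i)=R^{-1}(I_i)$ rather than on $J_i$ itself, but since $EfE^{-1}$ permutes the $E(J_i)$ this is precisely what the slope-cancellation step uses, and your reference to Remark~\ref{Raffine} shows you have this in mind.

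In short: the paper's approach is shorter but leans on a periodic-point-free hypothesis that the theorem does not supply; your approach is more intrinsic and establishes the result in the full stated generality.
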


\begin{remar}
As  a consequence of the previous result, we get Theorem \ref{extfree} since it deals with  groups that satisfy Item (2).
\end{remar}

\noindent {\bf Proof of Theorem \ref{ext2}.}

Let $\{ f_1, \cdots, f_r \}$ be a finite generating set of $G$. We claim that there exist positive integers $p_i$ for $i=1,\cdots, r$ such that $ f_i^{p_i}$ are pairwise commuting. Indeed, by hypothesis there exists $K$ an abelian normal subgroup of $G$ with finite index, $n$. Let $\{m_1, \cdots, m_n\}$ be a set of representatives of $ G/K$. Then, for any $i=1,\cdots, r$ and $s \in \mathbb Z$ there exists $j_s=j(i,s) $ such that $ f_i ^s  $ belongs to the class modulo $K$ of $m_{j_s}$.
By finiteness there exist $ s < t$ such that $ f_i ^s  $ and $ f_i ^t  $ are in the same class modulo $K$. That is, $f_i ^{t-s} \in K$ so taking $p_i=t-s$ we get the claim.

Suppose that $G$ satisfies the first item of Theorem \ref{ext2}, eventually increasing the generating set of $G$ we can suppose that $f_1$ is the element under consideration in this item. In particular, any iterate of $f_1$ is also a product of restricted rotations with disjoint supports and without periodic points. Therefore, by Lemma \ref{Lem7.2}, $f_1$ is conjugated by  $A$ in $PL$ to an element of $G_n$.

Suppose that $G$ satisfies the second item of the theorem, by Proposition \ref{z2}, the subgroup $\Gamma$ is  conjugated in  $PL\circ IET$ to a subgroup of some $G_n$. Eventually increasing the generating set of $G$ we can suppose that $f_1$ belogs to $\Gamma$ 

In both cases, we have that   $f_1^{p_1}$ is conjugated by  $A$ in $PL\circ IET $ to an element of $G_n$.

Let $i=2,\cdots, r$, we apply Proposition \ref{CGn} to $f_1^{p_1}$ and $f_i^{p_i}$ and we obtain that $f_i^{p_i}$   is conjugated by  $A$ to an element of $G_n$.

Finally, re-applying Proposition \ref{CGn} to $f_i^{p_i}$ and $f_i$ and we obtain that $f_i$   is conjugated by  $A$  to an element of $G_n$.\hfill $\square$

\subsection{Proof of Theorem \ref{Th4.1}.}
\begin{lemm}\label{FixhPerf} Let $f, h $ be  IETs.
\begin{itemize}
\item If $h$ commutes with $f$ minimal then either $h=Id$ or $Fix(h)=\emptyset.$
\item If  $h$ reverses $f$ then $Fix(h)\subset Per(f)$.
\end{itemize}
\end{lemm}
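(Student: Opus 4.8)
The plan is to isolate one elementary observation about IET fixed points and then confront it with the density of orbits in minimal sets. \textbf{Key observation:} if $h$ is an IET and $h(x)=x$, then $h$ is the identity on the whole half-open continuity interval $[b,c)$ of $h$ containing $x$; indeed on such an interval $h$ is a single translation $y\mapsto y+c_j$, and $h(x)=x$ forces $c_j=0$. In particular $Fix(h)$ is a finite union of half-open intervals, and every fixed point sits inside a genuine (positive length) interval on which $h$ is trivial. Both bullets then follow by playing this rigidity against density.

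For the first bullet, I would suppose $Fix(h)\neq\emptyset$ and pick $x$ with $h(x)=x$. Since $h$ commutes with $f$, one has $h(f^n(x))=f^n(h(x))=f^n(x)$ for every $n$, so the whole orbit $\mathcal O_f(x)$ lies in $Fix(h)$. As $f$ is minimal, $\mathcal O_f(x)$ is dense in $[0,1)$, hence meets each continuity interval $[b_j,b_{j+1})$ of $h$; by the key observation the translation constant there vanishes, so $h=Id$ on every continuity interval and therefore $h=Id$. This yields the dichotomy $h=Id$ or $Fix(h)=\emptyset$.

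For the second bullet I would argue by contradiction: assume $x\in Fix(h)$ but $x\notin Per(f)$. By the Arnoux-Keane-Mayer decomposition $x$ then lies in a minimal component $M$, so $x$ is not $f$-periodic and $\mathcal O_f(x)$ is dense in $M$. Let $U=[b,c)$ be the continuity interval of $h$ through $x$, on which $h=Id$; then $U\cap M$ is a positive-length relatively open neighbourhood of $x$ in $M$. The reversing relation $hfh^{-1}=f^{-1}$ together with $h(x)=x$ gives $h(f^n(x))=f^{-n}(x)$ for all $n$. Hence if some $f^n(x)$ with $n\neq 0$ lay in $U$, then $h(f^n(x))=f^n(x)$ and simultaneously $h(f^n(x))=f^{-n}(x)$, forcing $f^{2n}(x)=x$ and contradicting the aperiodicity of $x$. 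Thus $\mathcal O_f(x)\cap U=\{x\}$, a single point, while density of $\mathcal O_f(x)$ in $M$ forces $\mathcal O_f(x)\cap(U\cap M)$ to be infinite. This contradiction proves $Fix(h)\subset Per(f)$.

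The main obstacle is precisely this second bullet. The naive temptation is to pass to $h^2$, which commutes with $f$, and invoke the first bullet; but that only gives $h^2=Id$ near $x$ and still leaves one to exclude an IET involution reversing a minimal map. The clean route, avoiding that detour, is the orbit-reflection identity $h(f^n(x))=f^{-n}(x)$, which converts the orientation-preserving, locally trivial behaviour of $h$ at a fixed point into the forbidden relation $f^{2n}(x)=x$. The only care needed is to verify that $U\cap M$ is a genuinely infinite relatively open piece of $M$, which holds because $M$ is a finite union of half-open intervals and $x$, not being a right endpoint of its continuity interval, has a positive-length piece of $M$ immediately to its right on which $h$ is trivial.
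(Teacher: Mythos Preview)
Your proof is correct and follows essentially the same approach as the paper: for both bullets you exploit that an IET fixed point lies in a whole half-open interval of fixed points, then use density of the orbit (by minimality, resp.\ by the Arnoux--Keane--Mayer decomposition) together with the identity $h(f^n(x))=f^{-n}(x)$ to force $f^{2n}(x)=x$. The paper phrases the second bullet via a subsequence $f^{p_n}(x)\to x_+$ rather than a single density argument, but the content is identical.
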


\begin{proof} Let $x\in Fix (h)$.

$\bullet$ If $h$ commutes with $f$, one has $h(f^{n}(x))=f^{n}(h(x))= f^{n} (x)$, then $\mathcal O_f(x) \subset Fix(h)$. By minimality, $Fix(h) = [0,1)$ and $h=Id$.

\smallskip

$\bullet$ If $h$ reverses $f$,  since $h$ is an IET there exists a subinterval $I_x=[x,c) \subset Fix(h)$. We now argue by contradiction supposing that $x \notin Per(f)$ therefore $\mathcal O_f(x)$ is locally  dense, in particular there exists a subsequence $p_n$ such that $f^{p_n}(x) \rightarrow x_+$. 
 
By reversibility,  $h(f^{p_n}(x))=f^{-p_n}(h(x))= f^{-p_n} (x)$ and as $f^{p_n} (x) \rightarrow x_+$, for $n$ sufficiently large $f^{p_n}(x)\in Fix(h)$ and then $f^{p_n}(x) = f^{-p_n} (x)$  meaning that $x$ is $f$-periodic.  \end{proof}

\noindent{\bf Proof of Theorem \ref{Th4.1}.}

(1)- Let $f$ be an minimal IET  reversible by $h$. We argue by contradiction supposing that
$\langle f,h \rangle$ is not free. Therefore by Lemma \ref{FNBS} (1),  there exist $x\in I$ and two integers $p$, $q$  such that $g= f^p h^q\not=Id$ and $g(x)= x$, this contradicts Lemma \ref{FixhPerf}. 

\medskip

(2) Suppose that $f$ is minimal and $h$ has infinite order, then by the previous item, $\langle f,h\rangle$ generates a faithful and free action. Thus, Theorems \ref{Th2}, \ref{Th3} and Corollary \ref{FO} imply that $\langle f,h\rangle$ is conjugated to an action in $G_n$ and $f$ can not be minimal. 

\medskip

(3) \textit{Periodic  IETs are strongly reversible.}

Let $f$ be a periodic IET, by the Arnoux-Keane-Mayer decomposition Theorem, $I=[0,1)$ can be written as the union of finitely many $f$-periodic components  $M_i$, $i=1,..,n$ of period $p_i$. In particular, $M_i=\sqcup_{k=1}^{p_i} J_k$, where $ J_k= f^{k-1}(J_1)$,  $J_1=[b,c[$ where $b,c \in BP_{\infty} (f)$ and $f^k$ is continuous on $J_1$.

Eventually conjugating $f$ and $h$ by an IET, we can suppose that the $J_k$'s are ordered consecutive intervals so the $M_i$'s are intervals.


Let us fix $i\in \{1,...,n\}$ and denote $m=p_i$.

Since  $ J_k= f^{k-1}(J_1)$,  all $J_k$, $k=1, ..., p_i$  have same length.  Therefore the map $H: M_i\rightarrow I$ defined by $H(J_k) = [\frac{k-1}{m},\frac{k}{m})$ is a homothecy that conjugates the restriction  $f_{\vert M_i}$  to an element of $G_m$ with $\alpha(H f_{\vert M_i} H^{-1})= 0$.

By Theorem 1, we conclude that $H f_{\vert M_i} H^{-1}$ is strongly reversible, so  there are two involutions $i_1$ and $i_2$ in $G_m$ such that $H f_{\vert M_i} H^{-1}=i_1 i_2$.

Finally $f_{\vert M_i} = (H^{-1} i_1 H)  ( H^{-1}i_2 H)$, where  $H^{-1} i_1 H$ and $H^{-1}i_2 H$ are involutions in $\mathcal G$ with supports in $M_i$. Therefore, for all $i$, $ f_{\vert M_i}$ is strongly reversible. This leads to the required statement.  \hfill $\square$

\subsection{Proof of Theorem \ref{Th4} and Corollary \ref{corpiper}.}

\begin{prop} \label{deco}\
If  $f$ is reversible in IET by $h$  then $I$ can be decomposed as $I=M_1\sqcup M_2 ... \sqcup M_l$, where $M_i$ are type $\mathcal M$ $f$-invariant subsets that are $h$-invariant  and  either

\begin{enumerate}
\item  There exists an integer $p_i$ such that any point of $M_i$ is $f$-periodic of period $p_i$;

\smallskip

In the following items, each $M_i$ is the union of finitely many $f$-minimal components and $h$ acts transitively on these components.

\item  the action of $\langle f,h \rangle$ on $M_i$ is a $BS(1,-1)$ faithful free action;

\smallskip
\item  the action of $\langle f,h \rangle$ on $M_i$ is a non  faithful $BS(1,-1)$ action: moreover there exists an even integer $p_i$ even such that the restriction $h^{p_i}_{\vert M_i}=Id$;

\smallskip
\item there exists an even integer $p_i$ such that $M_i$ is a union of $p_i$ $f$-minimal components.
\end{enumerate}
\end{prop}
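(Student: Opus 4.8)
The plan is to build the decomposition from the two standard invariant partitions already available: the Arnoux-Keane-Mayer decomposition of $f$ into periodic and minimal components, and the $h$-action permuting these pieces. First I would record, via Lemma \ref{comp}, that $h$ sends minimal components of $f$ to minimal components and that each $Per_p(f)$ is $h$-invariant. Since there are finitely many periods occurring and finitely many minimal components, I can group $[0,1)$ into maximal $\langle f,h\rangle$-invariant type-$\mathcal M$ blocks: for the periodic part, take each $Per_{p}(f)$ (by Remark \ref{perp} this is a type-$\mathcal M$, $f$-invariant, and by Lemma \ref{comp} $h$-invariant set), giving the sets $M_i$ of item (1). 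For the minimal part, I would let $h$ act on the finite set of $f$-minimal components and take the $M_i$ to be the unions over each $\langle h\rangle$-orbit of components; each such $M_i$ is then $f$-invariant, $h$-invariant, a finite union of $f$-minimal components, and $h$ acts transitively on the components inside it by construction. This establishes the partition $I=M_1\sqcup\cdots\sqcup M_l$ and the preamble of items (2)--(4).

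Next I would analyze a single minimal block $M_i$ and show it falls into one of cases (2), (3), (4). On $M_i$ the pair $(f_{\vert M_i},h_{\vert M_i})$ satisfies $h f h^{-1}=f^{-1}$, so it defines an action of $BS(1,-1)$ (using that $b a b^{-1}=a^{-1}$ is the only relation). Here I would invoke Lemma \ref{FNBS}: the action is faithful if and only if neither $f^p_{\vert M_i}=\mathrm{Id}$ nor $h^{2p}_{\vert M_i}=\mathrm{Id}$ holds; since $f$ is minimal on each component and $h$ permutes the components, $f_{\vert M_i}$ has infinite order, so non-faithfulness forces $h^{2p}_{\vert M_i}=\mathrm{Id}$ for some $p$, i.e. $h_{\vert M_i}$ has finite (necessarily even) order. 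If the action is faithful we are in case (2). If it is not faithful, $h_{\vert M_i}$ has finite even order $p_i$ with $h^{p_i}_{\vert M_i}=\mathrm{Id}$, which is exactly case (3).

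Finally I would split the faithful/non-faithful dichotomy according to whether $h$ preserves each minimal component or genuinely permutes several of them, which is where item (4) enters. If $h_{\vert M_i}$ fixes each of the constituent minimal components, then $h$ and $f$ both preserve a single minimal component and the $BS(1,-1)$ analysis of cases (2)/(3) applies directly. If instead $h$ permutes the components nontrivially (transitively, by our grouping), the number of components must be even: indeed the reversibility relation forces the orbit structure of $h$ on components to pair up with the $f$-dynamics in the manner already worked out in Lemma \ref{cyc} and Lemma \ref{case}, giving an even count $p_i$, which is case (4). The main obstacle I expect is precisely this last bookkeeping—showing that transitive nontrivial permutation of minimal components by $h$ forces an even number of them and reconciling it cleanly with the faithful versus non-faithful distinction, so that items (2), (3), (4) are genuinely exhaustive and not overlapping; the even-ness should come from the fact that $\sigma_h$ reverses $\sigma_f$ together with the parity arguments in Lemmas \ref{cyc} and \ref{case}, transported from the symbolic $\mathcal S_n$ setting to the permutation of components here.
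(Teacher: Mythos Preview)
Your construction of the blocks $M_i$ (grouping by $Per_p(f)$ on the periodic side and by $\langle h\rangle$-orbits of minimal components on the minimal side) matches the paper exactly, and your handling of item (1) is fine. The genuine gap is in the analysis of the minimal blocks: you only run the dichotomy \emph{faithful versus non-faithful}, whereas item (2) requires the action to be both faithful \emph{and free}. You never say what happens when the $BS(1,-1)$ action on $M_i$ is faithful but not free, and this is precisely where item (4) comes from in the paper.

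Concretely, if the action is faithful but some $g=h^p f^{-q}$ (with $(p,q)\ne(0,0)$) has a fixed point $x\in N$, the paper uses Lemma~\ref{FixhPerf} to rule out odd $p$ (since then $g$ would reverse $f$ and $x\in Fix(g)\subset Per(f)=\emptyset$), and then minimality to propagate $h^p=f^q$ to all of $N$. From there a short case analysis on the parity of $s$ (the length of the $h$-orbit of $N$) and on whether $q=0$ forces either $h^{p}_{\vert M_i}=\mathrm{Id}$ (back to item (3)) or a contradiction when $s$ is odd, leaving only the case $s$ even, which is item (4). Your appeal to Lemmas~\ref{cyc} and~\ref{case} to extract evenness does not work here: on the set of $f$-minimal components the permutation induced by $f$ is the identity (each component is $f$-invariant), so ``$\sigma_h$ reverses $\sigma_f$'' imposes no parity constraint whatsoever on the $h$-orbit length. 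The evenness genuinely comes from the non-freeness argument via Lemma~\ref{FixhPerf}, not from the symbolic permutation lemmas.
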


\begin{proof} According to the Arnoux-Keane-Mayer decomposition Theorem, $I$ can be decomposed as a finite union of $f$-minimal or $f$-periodic components. Let $N$ be a component of $f$. 

\smallskip

If $N$ is $p$-periodic, we define $M=Per_p(f)$.

If $N$ is minimal, according to  Lemma \ref{comp} and since $f$ has finitely many components, there exists a least integer $s$ such that  $h^{s}(N)=N$ and  we define $ M=\bigcup_{j=0}^{s-1} h^j(N)$.

\smallskip

{\bf Case 1: }\textit{$N$ is periodic.} By Lemma \ref{comp} and Remark \ref{perp}, the set $\displaystyle M$ is of type $\mathcal M$, it is invariant by $f$ and $h$ and it satisfies item (1).

\medskip

{\bf Case 2: }\textit{$N$ is minimal and the action of $\langle f,h \rangle$ on $M$ is faithful and free.}
\medskip

{\bf Case 3: }\textit{$N$ is minimal and  the action of $\langle f,h \rangle$ on $M$ is either not faithful or not free.}

If the action is not faithful, as $f$ is minimal, by Lemma \ref{FNBS}, the map $h_{\vert N}$ has even order and the same holds for $h_{\vert M}$. Thus, we are exactly in the situation described by item (3).

\smallskip

If the action is not free, therefore by Lemma \ref{FNBS} there exist $x\in N$, $p=p_1s$  a multiple of $s$ and $q$ integers such that $h^p(x)=f^q(x)$.

Therefore $H=h^p f^{-q} _{\vert M} $ satisfies $H(x)=x$. If $p$ is odd then $H$ reverses $F=f_{\vert M}$, this contradicts Lemma \ref{FixhPerf}, so $p$ is even. Then $h^p(x)= f^{q}(x)$ and for all $n\in \mathbb Z$, one has $ h^p(f^n(x))= f^{n} (h^p(x))=f^n (f^q(x))= f^q (f^n(x))$. Thus $h^p_{\vert\mathcal O_f(x)}=f^q_{\vert \mathcal O_f(x)}$ and by minimality $h^p_{\vert N} =f^q_{\vert N}$.

If $q=0$ then $h^p_{\vert N}=Id$ and $h^p_{\vert M}=Id$. From now on, we consider $q\neq 0$.

If $h$ preserves $N$ then $f^q_{\vert N}$ commutes with $h$, this contradicts $h$ is a reverser of $f^q$.

If $h(N)\not=N$ and $s$ is odd then  $p_1$ is even, $H=h^s$ is a reverser of $f$ that preserves $N$ and $H^{p_1}(x)=f^q(x)$. We conclude that $f^q_{\vert N}$ commutes with $H$, this contradicts $H$ is a reverser of $f^q$.

\smallskip

If $h(N)\not=N$ and $s$ is even, we are in the situation described by item (4). 
\end{proof}

{\bf Proof of Theorem \ref{Th4}.} We will prove that $f_{\vert M_i}$ is reversible by a finite order element in any situation described by Proposition \ref{deco}.

{\bf 1. }\textit{$M$ is a finite union of $f$-periodic components of period $p$.} 


We conclude that $f$ is strongly reversible by  Theorem \ref{Th4.1} Item (3).

\medskip

{\bf 2. }\textit{The action of $\langle f,h\rangle$ on $M_i$ is a $BS(1,-1)$ faithful free action.} We conclude that $f$ is strongly reversible by Theorem \ref{Th3}, Theorem \ref{Th2} and the fact that the map $R$ is affine on $M_i$ according to Remark \ref{Raffine}.

\smallskip

{\bf 3. }\textit{There exists an even integer $p_i$ such that the restriction $h^{p_i}_{\vert M_i}=Id$.}  Therefore $p_i=2l$ and  either $l$ is odd and $h^l$ is an involution reversing $f$ or $l$ is even and  $h$ has order a multiple of $4$.

\smallskip

{\bf 4. }\textit{There exists an even integer $p_i$ such that $ M_i=\bigcup_{j=0}^{p_i-1} h^j(N)$ is a union of an even number of $f$-minimal components.} The map $h_0$ defined by 
$$ h_0(x)=  \left\{ \begin{array}{c} 
 h(x) {\text{ \ \ \ if }}  x\in h^j(N) {\text{ and }}  j {\text{  is even }} \\
h^{-1}(x) {\text{ if }} x\in   h^j(N) {\text{ and }} j {\text{ is \ odd,}} 
\end{array} \right.$$ is an involution that reverses $f\vert M_i$.

\bigskip
{\bf Proof of Corollary \ref{corpiper}.} \textit{Any reversible IET is a product of at most 2 periodic IETs.}
 
According to Theorem \ref{Th4}, a reversible IET $f$ is reversible by $h$ with even order $2p$. As $f=h (h^{-1}f)$, and $( h^{-1}f) ^{2p}=(h^{-1}f h^{-1}f)^p=(h^{-1}f f^{-1}h^{-1})^p= h^{-2p}=Id$, we have that $f$ is the product of 2 periodic IETs.

\section{Composites of involutions for 3-IETs and 4-IETs. Proof of Theorem \ref{Th7}.}

\subsection{The SAF invariant} It was introduced independently by Sah (\cite{Sah}) and Arnoux-Fathi (\cite{Ar}). We recall its definition and properties following Boshernitzan (\cite{Bos}).

\smallskip

Denote by $\mathbb R \otimes_{\mathbb Q} \mathbb R$ the tensor product of two copies of real numbers viewed as vector spaces over $\mathbb Q$: the space of finite sums of formal products $\lambda\otimes \gamma$ up to the equivalence properties $(\lambda+\lambda')\otimes \gamma=\lambda\otimes \gamma+\lambda'\otimes \gamma$, $\lambda\otimes (\gamma+\gamma')=\lambda\otimes \gamma + \lambda\otimes \gamma'$ and $q(\lambda\otimes \gamma)=(q\lambda)\otimes \gamma= \lambda\otimes (q\gamma)$, for $\lambda,\gamma$ in $\mathbb R$ and $q\in \mathbb Q$.

\smallskip

Denote by $\mathbb R \wedge_{\mathbb Q} \mathbb R$ the skew symmetric tensor product of two copies of reals: the vector subspace of $\mathbb R \otimes_{\mathbb Q} \mathbb R$  spanned by the wedge products $\lambda\wedge  \gamma := \lambda\otimes \gamma - \gamma  \otimes \lambda$, \ \  $\lambda,\gamma$ in $\mathbb R$.

\smallskip

\begin{defi}
The {\bf Sah-Arnoux-Fathi (SAF) invariant} is defined by $$\displaystyle SAF(f)=\sum_{k=1}^r  \lambda_k \otimes \gamma_k \in \mathbb R \otimes_{\mathbb Q} \mathbb R,$$ where the vectors $(\lambda_k)\in \mathbb R ^r$ encode the lengths of exchanged intervals and $(\gamma_k)$ the corresponding translation constants  respectively. 
\end{defi}

\smallskip

\begin{propri} \label{PropSAF} \ 
\begin{enumerate}
\item  $SAF: \mathcal{G} \rightarrow \mathbb R \otimes_{\mathbb Q} \mathbb R$ is a group homomorphism;
\item  For rotations, $SAF(R_{\beta})=  - 1\wedge \beta$, in particular $SAF(R_{\beta})=0$ if and only if $R_{\beta}$ is periodic.
\end{enumerate}
\end{propri}
As a consequence, any reversible IET has zero SAF-invariant.

\subsection{Proof of Theorem \ref{Th7}.}
\subsubsection{Item 1: 3-IETs with zero SAF-invariant are periodic.}
\begin{proof}
Let $f$ be a 3-IET having zero SAF-invariant and with associated permutation $\sigma\in \mathcal S_3$.

If $\sigma$ fixes $1$ or $3$ then $f$ is a restricted rotation with zero SAF-invariant. Therefore, by Properties \ref{PropSAF} $f$ is periodic. 

We suppose that $\sigma$ does not fix $1$ and $3$. 

If $\sigma$ is a 3-cycle then $f$ is a rotation  with zero SAF-invariant. As before, we conclude that $f$ is periodic. 

If $\sigma$ is a 2-cycle that does not fix $1$ and $3$ then $\sigma=(1,3)$.

Therefore, $f$ has 2 discontinuities $a_1$ and $a_2$. We set $a_0=0$, $a_3=1$ and we denote by $\lambda_i$ the length of $I_i=[a_{i-1}, a_{i})$ for $i=1,2,3$. 

One has  $a_1= \lambda_1$, $a_2=\lambda_1+\lambda_2$ and $f$ is easily computed:

\begin{itemize}
\item $f(x) = x + \lambda_2+\lambda_3= x + 1- \lambda_1$ \qquad  \qquad  for $x\in I_1$,
\item $f(x) = x - \lambda_1+\lambda_3=  x + 1-2\lambda_1-\lambda_2$  \ \quad for $x\in I_2$,
\item $f(x) = x - \lambda_1-\lambda_2$ \qquad \qquad  \qquad \qquad \qquad for $x\in I_3$.
\end{itemize}
\medskip

We compute the SAF-invariant of $f$: $$SAF(f)=\lambda_1 \otimes  ( 1-\lambda_1)  +    \lambda_2 \otimes (1-2\lambda_1-\lambda_2) +   (1-\lambda_1-\lambda_2) \otimes ( - \lambda_1-\lambda_2)=$$
 $$ \lambda_1 \otimes 1 + \lambda_2 \otimes (1-2\lambda_1) 
- 1\otimes \lambda_1  +\lambda_1\otimes \lambda_2  + 1 \otimes (-\lambda_2) + \lambda_2\otimes \lambda_1= $$
$$\lambda_1\wedge\lambda_2 + (\lambda_1+\lambda_2) \wedge 1 = - \lambda_1\wedge\lambda_1 - \lambda_2\wedge\lambda_1 + (\lambda_1+\lambda_2) \wedge 1 = $$ $$ - (\lambda_1+\lambda_2)\wedge\lambda_1 + (\lambda_1+\lambda_2) \wedge 1 = (\lambda_1+\lambda_2)\wedge (1-\lambda_1 ). $$
 
Therefore, $SAF(f)=0$ if and only if  $\frac{ \lambda_1+\lambda_2}{1-\lambda_1 } \in \mathbb Q$.
 
\medskip

It is straightforward to check that the first return map of $f$ on $[0, \lambda_1+\lambda_2)$ is 
given by $x\mapsto x+ (1-2\lambda_1-\lambda_2) \ mod  (\lambda_1+\lambda_2)$. Therefore it is minimal if and only if $\frac {(1-2\lambda_1-\lambda_2)}{\lambda_1+\lambda_2} \notin \mathbb Q$.

Note that if $\frac {(1-2\lambda_1-\lambda_2)}{\lambda_1+\lambda_2} \in \mathbb Q$ then the first return map is periodic so is $f$.

Since $\frac {(1-2\lambda_1-\lambda_2)}{\lambda_1+\lambda_2}= -1 + \frac{1-\lambda_1 } { \lambda_1+\lambda_2}$ we conclude that $SAF(f)=0$ if and only if $f$ is periodic. \end{proof}

\subsubsection{The SAF invariant for  product of two restricted rotations.}

Given $(l_1, l_2,\delta_1,\delta_2)\in \mathbb R^4$ such that $l_1+l_2=1, 0\leq \delta_1 < l_1$  and $0\leq \delta_2 < l_2$.
We consider  $f$  the product of two restricted rotations with associated permutation $(1,2) (3,4)$ and length vector $(l_1-\delta_1, \delta_1, 1-\delta_2-l_1, \delta_2)$. The translation vector is $(\delta_1, \delta_1-l_1,\delta_2, \delta_2-l_2)$. An easy computation leads to  
\begin{equation} SAF(f)=l_1\wedge \delta_1 + l_2\wedge \delta_2. 
\label{SAF} \end{equation} 

\subsubsection{Item 2: A 4-IET with zero SAF-invariant that is not reversible.} \ 

\begin{proof}

We claim that if $(\frac{\delta_1}{l_1},\frac{\delta_2}{l_2}) \notin \mathbb Q^2$ and $l_1\not= l_2$ then $f$ is not reversible. Indeed, we argue by contradiction. Any $h$ that reverses $f$ sends minimal $f$-components to minimal ones. As $l_1\not= l_2$, the map $h$ preserves $[0,l_1)$ and its restriction reverses a minimal rotation, a contradiction. 

\medskip

Assuming that $l_1 = p l_2 \in \mathbb Q^*$ and $\delta_2 = -p\delta_1 +r$ for some rational numbers $p$ and $r$, we get $SAF(f)=p l_2\wedge \delta_1 + l_2\wedge \delta_2= l_2 \wedge (p \delta_1+ \delta_2) =l_2 \wedge r =r (l_2 \wedge 1)=0$.\end{proof}

\subsubsection{Item 3: Products of two restricted rotations with zero SAF-invariant.} \ 

\begin{proof}  From Formula (\ref{SAF}), it holds that $SAF(f)=l_1\wedge \delta_1 + (1-l_1)\wedge \delta_2 =0$.

\smallskip

Moreover if $l_1 = \frac{p}{p+1}$ then $l_2 = 1- l_1=\frac{1}{p+1}$ and then  $l_1=p l_2$.

Therefore  $SAF(f)=p l_2\wedge \delta_1 + l_2\wedge \delta_2= l_2 (1 \wedge (p \delta_1+ \delta_2))  =0$ implies that  $p \delta_1+ \delta_2 \in \mathbb Q$ that is $\delta_2 = -p \delta_1+r$, with $r\in \mathbb Q$.

\smallskip
 
The homothecy $H$ of ratio $p+1$ conjugates $f$ to $F$ a product of two restricted rotations whose supports have lengths $l_1=p$ and $l_2=1$ respectively.

\smallskip

Let $RP$ be the product of the two restricted rotations, defined by:
\begin{itemize}
\item $RP(x)= x-q \ mod \ p$, on $[0, p)$, where $q \in \mathbb Q^+$ is such that $0<\delta_1-q<\frac{1}{p}$ and
\item $RP(x)= x-r+pq \ mod \ 1$,  on $[p, p+1)$.
\end{itemize}
As $p$, $q$ and $r$ are rational numbers, $RP$ is periodic.
 
\medskip
 
We have that $g= F\circ RP$ is the product of two restricted rotations and its length vector is $(p-\alpha,\alpha, p\alpha, 1-p\alpha)$ where $\alpha=\delta_1-q \in (0,\frac{1}{p})$. Hence, the translation vector of $g$ is $(\alpha,\alpha -p, 1-p\alpha, -p\alpha)$.
 
\smallskip
 
We suppose that $p\in \mathbb N^*$, we consider the group $\hat{G}_{p+1}$ the conjugate of $G_{p+1}$ by $H$ and we define $i$ as the element $\hat{G}_{p+1}$ with $\sigma_i = (1,2,...,p+1)$ and 
$\alpha_i = (-\alpha,-\alpha,...,-\alpha,p\alpha)$. An easy computation shows that $BP (i) = \{p+1-p\alpha, p, j, j+\alpha {\text{ with }} j=0,...,p-1 \}$ and $i$ is periodic of period $p+1$.

\medskip

\begin{figure}[H] \scalebox{.55}{\begin{picture}(450,200)
\put(0,0){\line(1,0){200}} \put(0,200){\line(1,0){200}}
\put(0,0){\line(0,1){200}} \put(200,0){\line(0,1){200}}

\put(-5,-15){$0$}
\put(147,-15){$p$}
\put(195,-15){$p+1$}

\put(-15,150){$p$}
\put(-15,200){$p+1$}

\put(150,0){\dashbox(0,200){}}

\put(0,150){\dashbox(200,0){}}

\put (70,70){$\alpha$}
\put (170,170){$-p\alpha$}

\put ( 70,210){ \ \Large{IET \ $g$}} 

\put(240,0){\line(1,0){200}} \put(240,200){\line(1,0){200}}
\put(240,0){\line(0,1){200}} \put(440,0){\line(0,1){200}}

\put(235,-15){$0$}
\put(285,-15){$1$}

\put(395,-15){$p$}
\put(435,-15){$p+1$}


\put(290,0){\dashbox(0,200){}}
 \put(340,0){\dashbox(0,200){}}
\put(390,0){\dashbox(0,200){}}

\put(240,50){\dashbox(200,0){}} 
\put(240,100){\dashbox(200,0){}} 
\put(240,150){\dashbox(200,0){}}

\put (260,70){$-\alpha$}
\put (360,170){$-\alpha$}
\put (310,120){$\cdots$}
\put (410,20){$p\alpha$}

\put ( 320,210){\ \ \Large{ IET \ $i$}}
\end{picture}}\end{figure}

\medskip

It follows that $[0, 1-\alpha)$, $[p-\alpha,p)$ , $[p,p+1)$, $[j-\alpha, j)$, $[j,j+1-\alpha)$ with $j=1,...,p-1$ are the continuity intervals of $i\circ g$ and its translation constants are integers. We conclude that $i\circ g$ is periodic. Hence $g$ is the product of two periodic IETs, therefore $g$ can be written as the product of $4$ involutions and $F=g \circ RP^{-1}=$ is the the product of $6$ involutions: $i_k$, $k=1,\cdots ,6$. 

Finally, the initial map $f=H \circ  F \circ  H^{-1}$ is the product of $H \circ i_k \circ H^{-1}$,  $k=1,\cdots,6$ that are involutions of $\mathcal G$.\end{proof}

\section{Related groups. Proof of Theorem \ref{related}.}
As pointed out by Leroux (\cite{Ler}), reversible elements occur in some torsion-free groups, namely $BS(1,-1)$, the crystallographic group $C_1$ and the group $C_2$ defined in Theorem \ref{related}. Note that, by Theorem \ref{Th4.1} (2), in faithful actions of these groups reversible elements are not minimal. It is natural to  ask whether these groups admit free or/and minimal actions by IET. 
\subsection{The Baumslag-Solitar group $BS(1,-1)$. Proof of item (1).} \ 

In the example of section 2, the set $[0,\frac{1}{4})\cup [\frac{3}{4},1)$ is invariant, so the action is not minimal. Here, we give an example where $ BS(1,-1)$ acts freely and minimally by IET.

\begin{figure}[H]\scalebox{.55}{\begin{picture}(450,200)
\put(0,0){\line(1,0){200}} \put(0,200){\line(1,0){200}}
\put(0,0){\line(0,1){200}} \put(200,0){\line(0,1){200}}

\put(-5,-20){$0$}
\put( 50,-15){$\frac{1}{4}$}
\put(100,-20){${\frac{1}{2}}$}
\put(147,-15){$\frac{3}{4}$}
\put(195,-20){$1$}

\put(-15,50){$\frac{1}{4}$}
\put(-15,100){$\frac{1}{2}$}
\put(-15,150){$\frac{3}{4}$}
\put(-15,200){$1$}

\put(50,0){\dashbox(0,200){}}
\put(100,0){\dashbox(0,200){}}
\put(150,0){\dashbox(0,200){}}

\put(0,50){\dashbox(200,0){}}
\put(0,100){\dashbox(200,0){}}
\put(0,150){\dashbox(200,0){}}

\put (20,20){$-\alpha$}
\put (70,70){$\alpha$}
\put (120,120){$-\alpha$}
\put (170,170){$\alpha$}

\put ( 70,210){\Large{ IET \ \ $a$}}

\put(240,0){\line(1,0){200}} \put(240,200){\line(1,0){200}}
\put(240,0){\line(0,1){200}} \put(440,0){\line(0,1){200}}

\put(235,-20){$0$}
\put(285,-15){${\frac{1}{4}}$}
\put(335,-20){${\frac{1}{2}}$}
\put(395,-15){${\frac{3}{4}}$}
\put(435,-20){$1$}


\put(290,0){\dashbox(0,200){}}
 \put(340,0){\dashbox(0,200){}}
\put(390,0){\dashbox(0,200){}}

\put(240,50){\dashbox(200,0){}} 
\put(240,100){\dashbox(200,0){}}
\put(240,150){\dashbox(200,0){}}

\put (260,70){$\beta$}
\put (310,120){$\beta$}
\put (360,170){$\beta$}
\put (410,20){$\beta$}

\put ( 320,210){\Large{IET \ \ $b$}}
\end{picture}}\end{figure}

\bigskip

This action is free and minimal provided that $\alpha$ and $\beta$ are rationally independent.

\subsection{The crystallographic group $C_1$.  Proof of item  (2).} \ 

The following action of  $C_1 =\langle a, b \  |  \ b a^2  b^{-1} = a^{-2},  a b^2 a^{-1} =b^{-2}\rangle$ is free and minimal, provided that $\alpha$ and $\beta$ are rationally independent.

\begin{figure}[H] \scalebox{.55}{\begin{picture}(450,200)
\put(0,0){\line(1,0){200}} \put(0,200){\line(1,0){200}}
\put(0,0){\line(0,1){200}} \put(200,0){\line(0,1){200}}

\put(-5,-20){$0$}
\put( 50,-15){$\frac{1}{4}$}
\put(100,-20){${\frac{1}{2}}$}
\put(147,-15){$\frac{3}{4}$}
\put(195,-20){$1$}

\put(-15,50){$\frac{1}{4}$}
\put(-15,100){$\frac{1}{2}$}
\put(-15,150){$\frac{3}{4}$}
\put(-15,200){$1$}

\put(50,0){\dashbox(0,200){}}
\put(100,0){\dashbox(0,200){}}
\put(150,0){\dashbox(0,200){}}

\put(0,50){\dashbox(200,0){}}
\put(0,100){\dashbox(200,0){}}
\put(0,150){\dashbox(200,0){}}

\put (20,120){$0$}

\put (70,170){$\alpha$}

\put (120,20){$- \alpha$}

\put (170,70){$0$}

\put ( 70,-50){\Large{IET \ \ $a$}}

\put(240,0){\line(1,0){200}} \put(240,200){\line(1,0){200}}
\put(240,0){\line(0,1){200}} \put(440,0){\line(0,1){200}}

\put(235,-20){$0$}
\put(285,-15){${\frac{1}{4}}$}
\put(335,-20){${\frac{1}{2}}$}
\put(395,-15){${\frac{3}{4}}$}
\put(435,-20){$1$}


\put(290,0){\dashbox(0,200){}}
\put(340,0){\dashbox(0,200){}}
\put(390,0){\dashbox(0,200){}}

\put(240,50){\dashbox(200,0){}} 
\put(240,100){\dashbox(200,0){}}
\put(240,150){\dashbox(200,0){}}

\put (260,170){$\beta$}
\put (310,120){$0$}
\put (360,70){$-\beta$}
\put (410,20){$0$}

\put ( 320,-50){\Large{IET \ \ $b$}}
\end{picture}
\bigskip}
\end{figure}

\bigskip  \ 

\subsection{The group $C_2$. Proof of item (3).} \ 

Leroux (\cite{Ler}) has proved that the elements $a$, $b$, $c$ are non trivial, $C_2=\langle a, b, c \ | \  ab a^{-1} = b^{-1}, bcb^{-1} = c^{-1} \rangle$ is torsion-free, every element of $C_2$ has a unique expression of the form $\omega b^n$, with $\omega \in F_2$ and the subgroup $F_2$ of $C_2$ generated by $a$ and $c$ is free. 

We prove that $C_2$ does not admit faithful  minimal action by IET. More precisely, we prove 
\begin{prop} Any  faithful action of $C_2$ by IET is conjugated in $\mathcal G$ to a reducible representation $\Gamma_2 < \mathcal G$ such that there exist a partition of $I$ into two half open $\Gamma_2$-invariant intervals $I_1$ and $I_2$ and non zero integers $l$, $n$, $p$  verifying that 
\begin{itemize}
\item $[a^l, c^n]=Id$ in $I_1$ and
\item $b^p=Id$ on $I_2$.
\end{itemize}
\end{prop}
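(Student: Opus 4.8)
The plan is to take for $I_2$ the set $P_b$ of $b$-periodic points and for $I_1$ the union $M_b$ of the $b$-minimal components provided by the Arnoux--Keane--Mayer decomposition of $b$; after conjugating the whole action by an IET one may assume both $P_b$ and $M_b$ are half open intervals. The two defining relations will be used mainly through their squares: $aba^{-1}=b^{-1}$ gives $a^2ba^{-2}=b$, so $a^2$ commutes with $b$, and likewise $b^2$ commutes with $c$. These commutations, together with the fact that reversibility forces rotation-like behaviour on minimal components, are what make the two pieces behave as announced.

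First I would prove that $P_b$ and $M_b$ are $\Gamma_2$-invariant. They are $b$-invariant by construction, and $a$-invariant because $a$ reverses $b$ (Lemma \ref{comp} and Remark \ref{perp}: $a$ preserves each $Per_p(b)$ and sends minimal components to minimal components). The delicate point is $c$-invariance, and here I would replace $b$ by $b^2$: since a point is $b$-periodic if and only if it is $b^2$-periodic, the periodic parts and the minimal parts of $b$ and of $b^2$ coincide; as $c$ commutes with $b^2$, it preserves the periodic and the minimal parts of $b^2$, hence $P_b$ and $M_b$. Thus $I_1=M_b$ and $I_2=P_b$ are invariant under $a$, $b$ and $c$.

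On $I_2=P_b$ the statement is immediate: by Arnoux--Keane--Mayer there are finitely many periodic components, so the periods of $b$ are bounded and $b^p=Id$ on $P_b$ for $p$ the least common multiple of these periods. The work is on $I_1=M_b$, where I must produce nonzero $l,n$ with $[a^l,c^n]=Id$. The key is that $b$ is \emph{rotation-like} on its minimal components: applying Proposition \ref{deco} to the reversible pair $(b,a)$ splits $M_b$ into finitely many pieces on which (after restriction) $\langle a,b\rangle$ is a free $BS(1,-1)$-action, whence Theorem \ref{Th4.1}(1) and Theorem \ref{Th3} let me conjugate, by a $PL\circ IET$ map, so that on every $b^2$-minimal component $N_j$ the map $b^2$ is an irrational rotation $R_{\theta_j}$. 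Since $a$ also reverses $b^2$ and $c$ commutes with $b^2$, this is the structure I will exploit.

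Finally I would conclude by a centralizer argument. The group $\Gamma_2$ permutes the finitely many components $N_j$ (because $a$ reverses $b^2$ and $c$ commutes with $b^2$), so choosing $m$ divisible by the order of this finite permutation action and setting $l=2m$, $n=m$, the maps $a^l$ and $c^n$ fix each $N_j$; moreover $a^l$ commutes with $b$ (as $a^2$ does) and $c^n$ commutes with $b^2$. On each $N_j$ both $a^l|_{N_j}$ and $c^n|_{N_j}$ thus commute with the irrational rotation $R_{\theta_j}$, so by Properties \ref{conjrot} they are themselves rotations, and two rotations commute; hence $[a^l,c^n]=Id$ on every $N_j$ and therefore on $I_1$. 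The main obstacle, and the step requiring the most care, is exactly this rotation-like reduction on $M_b$: the subgroup $\langle a,b\rangle$ need not act freely or faithfully on all of $M_b$, so one must run the case analysis of Proposition \ref{deco} and check that the conjugacy it produces is simultaneously compatible with $c$, which is the reason for passing systematically to $b^2$ and to the $b^2$-minimal components.
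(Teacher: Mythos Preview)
Your overall strategy---decomposing along the dynamics of $b$ rather than of $c$, and then arguing that both $a^{l}$ and $c^{n}$ lie in the (abelian) centralizer of a power of $b$ on each minimal piece---is a genuinely different route from the paper's. The paper decomposes along the $c$-components and exploits a fact you never invoke: the subgroup $\langle a,c\rangle$ of $C_2$ is free of rank two. Combined with the Novak/DFG theorem that an IET some power of which is (IET-)conjugate to a product of restricted rotations cannot lie in a non-abelian free subgroup of $\mathcal G$, this forces any would-be free $\langle b^{r},c\rangle$-action (with $r$ even, hence abelian) to fail, and from the resulting relation $b^{rp}c^{q}=Id$ on a minimal $c$-component one reads off either $b^{p}=Id$ (the pieces forming $I_2$) or, when $q\neq 0$, a commutation $[a^{l},c^{-q^{2}}]=Id$ (the pieces forming $I_1$).

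Your route is appealing because it bypasses the free-group input, but the argument as written has a real gap. You assert that after Proposition~\ref{deco} for the pair $(b,a)$ one may conjugate so that $b^{2}$ is an irrational rotation on every $b^{2}$-minimal component; Theorem~\ref{Th3}, however, only yields this on the pieces of type~(2), where $\langle a,b\rangle$ acts freely and faithfully. On pieces of type~(3) one has instead $a^{p_i}|_{M_i}=Id$, which gives $[a^{l},c^{n}]=Id$ trivially for $l$ a multiple of $p_i$ but produces no rotation structure. On pieces of type~(4) the rotation conclusion is again unavailable, yet the proof of Proposition~\ref{deco} supplies a relation $a^{p}|_{N}=b^{q}|_{N}$ with $p$ even and $q\neq 0$; from this $a^{2p}|_{a^{j}(N)}=(b^{2})^{\pm q}|_{a^{j}(N)}$, and since $b^{2}$ commutes with $c$ one gets $[a^{2p},c^{n}]=Id$ on each $a^{j}(N)$ once $c^{n}$ fixes the $b^{2}$-minimal components. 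So your plan can be completed, but only via this case-by-case argument, not by a uniform ``centralizer of an irrational rotation'' step. The obstacle you flag at the end (``compatibility of the conjugacy with $c$'') is not the actual difficulty: the actual difficulty is that on the type~(3) and type~(4) pieces nothing in the paper guarantees $b^{2}$ is rotation-like at all.
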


\begin{proof} According to the Arnoux-Keane-Mayer decomposition Theorem, $I$ can be written as a finite union of $c$-minimal or $c$-periodic components. Eventually conjugating $c$ in  $\mathcal G$, we can suppose that $c$-components are intervals.

Let $M$ be a minimal $c$-component, Lemma \ref{comp} implies that the set $b(M)$ is a minimal component of $c$. Thus, by finiteness, there exists $r$ such that $b^r (M)=M$ and w.l.o.g we can assume that $r$ is even. 

\smallskip

The action of $\langle b^r,c\rangle$ on $I$ is not free. Indeed, by contradiction and according to Theorem \ref{Th3}, the map $c$ would be conjugate (in PL $\circ$ IET) to an element of some $G_n$ and therefore an iterate of $c$ would be a product of restricted rotations with pairwise disjoint supports, this is a contradiction with a result of Novak claiming that such an IET do not belong to a non abelian free group (\cite{No2} or \cite{DFG}, Theorem 3.6). 

Hence, there exist $x\in M$ and $(p,q) \in \mathbb Z ^2\setminus \{(0,0)\}$ such that $g(x)=b^{rp}c^q(x)=x$. As $b^{rp}$ and $c$ commute, $g$ and $c$ commute and then $Fix(g)$ is $c$-invariant 
Thus the closure of the $c$-orbit of $x$ is included in $Fix(g)$, that is $M\subset Fix(g)$. 

Noting that $b^2$ and $c$ commutes, we conclude that $g=b^{rp}c^q=Id$ on $\cup b^{2k} (M)$.

Using that $b$ reverses $c^q$, we have that $b^{rp}c^{-q}=Id$ on $\cup b^{2k+1} (M)$.

\smallskip

Let $B=b^{rp}$, one has $B=c^{-q}$ on $\cup b^{2k } (M)$ and $B=c^q$ on $\cup b^{2k+1}(M)$. Since $a$ reverses $B$, by Lemma \ref{comp}, there exists $l=l(M)$ even such that $A=a^l$ preserves the $B$-components and therefore the minimal $c$-components.  We compute $[A,c^{-q^2}]$: 

\begin{itemize}
\item  on $\cup b^{2k } (M)$, we get $[A,c^{-q^2}]=[A,(c^{-q} )^q]=[A,B^q]= Id$ and
\item  on $\cup b^{2k +1 } (M)$, we get $[A,c^{-q^2}]=[A,(c^{q} )^{-q}]=[A,B^{-q}]= Id$.
\end{itemize}

In conclusion, given $M$ a minimal $c$-component, either $M$ is of
\begin{enumerate}[{\it type} (1):]
\item there exists $n=n(M)\not=0$ such that $[A,c^{n}]=Id$ on  $\cup b^{k} (M)$ (when $q\not= 0$)   or
\item there exists $p=p(M)\not=0$ such that $b^{p}=Id$ on  $\cup b^{k} (M)$ (when $q= 0$).
\end{enumerate}
 
Moreover, there is $t\not=0$ such that $c^t=Id$ on the union of the periodic components of $c$.

\smallskip

Changing $l$ by the product of all $l(M)$ and $n$ by the product of all $n(M)$ and $t$,  we have $[a^l,c^{n}]=Id$ on $I_1$,  the union of the periodic and  minimal $c$-components of type (1).

Changing $p$ by the product of all $p(M)$,  we have $b^p=Id$ on $I_2$, the union of the minimal $c$-components of type (2).
Conjugating by an IET, we arrive at the case where $I_1$ and $I_2$ are intervals.
\end{proof}
\bibliographystyle{alpha}
\bibliography{RefIET}
\end{document}